\author{H. Egger \and T. Kugler}
\address{Department of Mathematics, TU Darmstadt, Germany}
\email{egger@mathematik.tu-darmstadt.de}
\email{kugler@mathematik.tu-darmstadt.de}
\title[Exponential stability of Galerkin methods for damped wave systems]{Uniform exponential stability of Galerkin approximations for damped wave systems}
\newtheorem{lemma}{Lemma}[section]
\newtheorem{problem}[lemma]{Problem}
\newtheorem{theorem}[lemma]{Theorem}
\theoremstyle{definition}
\newtheorem{remark}[lemma]{Remark}
\newtheorem*{example*}{Example}
\def\dt{\partial_t}
\def\dx{\partial_x}
\def\dtt{\partial_{tt}}
\def\dxx{\partial_{xx}}
\def\u{u}
\def\RR{\mathbb{R}}
\def\eps{\varepsilon}
\def\E{E}
\def\k{k}
\def\Ek{\E^\k}
\def\dtk{\dt^\k}
\numberwithin{equation}{section}
\numberwithin{table}{section}
\numberwithin{figure}{section}
\begin{document}

\begin{abstract} 
We consider the numerical approximation of linear damped wave systems by Galerkin approximations in space and appropriate time-stepping schemes.
Based on a dissipation estimate for a modified energy, we prove exponential decay of the physical energy on the continuous level provided that the damping is effective everywhere in the domain. 
The methods of proof allow us to analyze also a class of Galerkin approximations based on a mixed variational formulation of the problem. Uniform exponential stability can be guaranteed for these approximations under a general compatibility condition on the discretization spaces. 
As a particular example, we discuss the discretization by mixed finite element methods for which we obtain convergence and uniform error estimates under minimal regularity assumptions.
We also prove unconditional and uniform exponential stability for the time discretization by certain one-step methods.
The validity of the theoretical results as well as the necessity of some of the conditions required for our analysis are demonstrated in numerical tests.
\end{abstract}

\maketitle

\begin{quote}
\noindent 
{\small {\bf Keywords:} 
damped wave equation, 
exponential stability, 
mixed finite element methods, 
uniform error estimates}
\end{quote}

\begin{quote}
\noindent
{\small {\bf AMS-classification (2000):}
35L05, 35L50, 65L20, 65M60}
\end{quote}

\section{Introduction} \label{sec:intro}

%
%
The propagation of sound waves in gas pipelines or of shock waves in water pipes, known as the water hammer, can be described by hyperbolic systems of the form \cite{BrouwerGasserHerty11,Guinot08}
\begin{align}
\dt u + \dx p + a u &= 0 \label{eq:sys1}\\
\dt p + \dx u &= 0. \label{eq:sys2}
\end{align}
In this context $p$ denotes the pressure, $\u$ is the velocity, and  
$a$ is the damping parameter accounting for friction at the pipe walls.
We will assume here that $a=a(x)$ is uniformly positive.
Variants of the above system also describe the damped vibration of a string or the heat transfer at small time and length scales \cite{Cattaneo48}. 
%
%
A suitable combination of the two differential equations leads to the second order form
\begin{align}
\dtt u +a \dt u - \dxx u = 0, \label{eq:sof}
\end{align}
known as the \emph{damped wave} or \emph{telegraphers equation} \cite{Evans98};
we will call the first order system \eqref{eq:sys1}--\eqref{eq:sys2} \emph{damped wave system}, accordingly.
Dissipative hyperbolic systems of similar structure describe rather general wave phenomena, 
in acoustics, linear elasticity, electromagnetics, heat transfer, or particle transport.
We consider here only a one-dimensional model problem in detail,
but most of our results can be generalized without much difficulty to more general and multi-dimensional 
problems of similar structure.

%
%
Due to many applications the damped wave equation has attracted significant interest in the literature. To put our work into perspective, let us briefly recall some of the main results.
%
%
When modeling the vibration of a string, the function $u$ denotes the displacement  and $\dt p=-\dx u$ the internal stresses. The physical energy of the system, consisting of a kinetic and a potential component, is then given by 
\begin{align*}
\E^1(t) = \frac{1}{2} \big( \|\dt u(t)\|^2 + \|\dt p(t)\|^2 \big).
\end{align*}
%
Replacing $\dt p$ by $\dx u$ yields the more common form $\frac{1}{2} \big( \|\dt u(t)\|^2 + \|\dx u(t)\|^2 \big)$ of the energy for the one-dimensional wave equation. 
If the boundary conditions are chosen such that no energy can enter or leave the domain via the boundary, then 
\begin{align*}
\frac{d}{dt} \E^1(t) = -\int_{dom} a \; |\dt u(t)|^2 dx \le 0,
\end{align*}
which illustrates that kinetic energy is dissipated efficiently 
by the damping mechanism.
%
%
It is well-known that as a consequence the total energy $\E^1(t)$ decreases exponentially, i.e., 
\begin{align*}
\E^1(t) \le C e^{-\alpha t} \E^1(0) 
\end{align*}
for some constants $C$ and $\alpha>0$, provided that the damping is effective at 
least on a sub-domain of positive measure \cite{CoxZuazua94,RauchTaylor74,Zuazua88}. 
A similar result holds if the damping only takes place at the boundary \cite{Chen79,Lagnese83}.
These decay estimates for the energy imply the exponential stability of the system which is of relevance from a practical and a theoretical point of view, e.g. for the control of damped wave systems \cite{Chen79,Zuazua05}. 

%
%
A first contribution of this paper is to show that the same decay estimates hold for the problem under investigation and for all energies of the form 
$$
\Ek(t) = \frac{1}{2} \big( \|\dtk u(t)\|^2 + \|\dtk p(t)\|^2 \big), \qquad k \ge 0
$$
under the assumption that the damping parameter $a(x)$ is bounded and uniformly positive.
In particular, our results cover the decay of $\E^0(t)=\frac{1}{2}( \|u(t)\|^2 + \|p(t)\|^2)$, 
which is the physical energy for the acoustic wave propagation or the water hammer problem.
This result is derived by carefully adopting arguments of \cite{BabinVishik83,Zuazua88} to the problem under consideration, in particular addressing the different boundary conditions and the case $k=0$. 
The assumptions on the damping parameter allow us to prove the energy decay under minimal regularity requirements on the damping parameter and the solution, and to obtain explicit estimates for the damping rate $\alpha$ depending only on the bounds for the damping parameter. 
This is important for the asymptotic analysis of damped wave phenomena \cite{LopezGomez97} and for the characterization of the parabolic limit problem.

%
%
Apart from the analysis, also the numerical approximation of damped wave phenomena has attracted significant interest in the literature, in particular for problems where the damping is not effective everywhere in the domain; see e.g. \cite{BanksItoWang91,ErvedozaZuazua09,Fabiano01,GlowinskiKintonWheeler89,InfanteZuazua99,TebouZuazua03,Zuazua05} and the references therein.
Mixed finite element schemes have been proven to be particularly well-suited for a systematic approximation \cite{BanksItoWang91,GlowinskiKintonWheeler89,Joly03,Zuazua05}. 
Error estimates for some fully discrete schemes for the damped wave equation have been obtained in \cite{GaoChi07,GroteMitkova13,Karaa11,RinconCopetti13}.
Let us also refer to  \cite{Baker76,DouglasDupontWheeler78,Dupont73,Geveci88,Joly03,Tebou98} for basic results on the analysis of numerical methods for wave propagation problems.
Our research contributes to this field by proposing and analyzing fully discrete approximation schemes 
that preserve the exponential stability uniformly with respect to the discretization parameters.

%
%
For the discretization in space, we consider here Galerkin approximations for a variational formulation of the first order system \eqref{eq:sys1}--\eqref{eq:sys2} in the spirit of \cite{Joly03}.
A simple compatibility condition for the approximation spaces for velocity and pressure allows us to establish well-posedness of a general class of Galerkin schemes and to prove the uniform exponential decay of the energies $E^k(t)$, $k \ge 0$ also on the semi-discrete level.
As a particular example for an appropriate Galerkin scheme, we discuss in some detail 
the discretization by mixed finite elements and we provide explicit error estimates for the resulting semi-discrete methods.
%
%
We then consider the time discretization by certain one-step methods and 
establish the unconditional and uniform exponential stability for the fully discrete schemes.
As a by-product of our stability analysis, we obtain error estimates that 
hold uniformly in time and with respect to the spatial and temporal mesh size.

\medskip 

%
%
The rest of the paper is organized as follows:
In Section~\ref{sec:prelim}, we give a complete definition of the problem under investigation and recall some basic results about the well-posedness of the problem.
In Section~\ref{sec:energy}, we derive the energy estimates and prove the exponential decay
to equilibrium under minimal regularity assumptions. 
In Section~\ref{sec:var}, we then provide a characterization of classical solutions via variational principles,
which are the starting point for the numerical approximation.
Section~\ref{sec:semi} is concerned with a general class of Galerkin discretizations in space, 
for which we provide uniform stability and error estimates. The discretization by mixed finite elements 
is discussed as a particular example in Section~\ref{sec:mixed}.
In Section~\ref{sec:time}, we then investigate the time discretization by a family of one-step methods and we prove unconditional and uniform exponential stability and error estimates for the fully discrete schemes.
Some numerical tests are presented in Section~\ref{sec:num} for illustration of the 
theoretical results.
We close with a short summary and briefly discuss the possibility for generalizations to multi-dimensional problems and other applications.

\section{Preliminaries} \label{sec:prelim}

For simplicity, we assume that the domain under consideration is the unit interval.\linebreak 
By $L^p(0,1)$ and $H^1(0,1)$ we denote the usual Lebesgue and Sobolev spaces.
The functions in $H_0^1(0,1)$ additionally vanish at the boundary. 
We denote by $(f,g)=\int_0^1 f g dx$ the scalar product 
on $L^2(0,1)$, and with $\|f\| = \|f\|_{L^2(0,1)}$ and $\|f\|_{1} = \|f\|_{H^1(0,1)}$ 
the norms of the spaces $L^2(0,1)$ and $H^1(0,1)$.
By $C^l([0,T];X)$ and $H^k(0,T;X)$ we denote the spaces of functions $f:[0,T] \to X$ 
with values in some Banach space $X$ having the appropriate smoothness and integrability 
with respect to time; see e.g. \cite{Evans98} for details.

The focus of our considerations lies on the linear hyperbolic system
\begin{align}
\dt u + \dx p + a u &= 0  \qquad \text{in } (0,1) \times (0,T)  \label{eq:instat1} \\
\dt p + \dx u       &= 0  \qquad \text{in } (0,1) \times (0,T)  \label{eq:instat2}
\end{align}
with homogeneous Dirichlet conditions for the pressure
\begin{align}
p&=0 \qquad   \text{on } \{0,1\} \times (0,T). \label{eq:instat3}
\end{align}
More general boundary conditions or inhomogeneous right hand sides 
could be taken into account without much difficulty. 
The initial values shall be prescribed by
\begin{align} \label{eq:instat4}
u(0)=u_0 \quad \text{and} \quad  p(0)=p_0 \qquad \text{on } (0,1). 
\end{align}
The well-posedness of this initial boundary value problem can  be deduced 
with standard arguments. For later reference, let us state the most basic results.
\begin{lemma} \label{lem:instat}
Let $a \in L^\infty(0,1)$ and $T>0$.
Then for any $u_0,p_0 \in L^2(0,1)$, the problem \eqref{eq:instat1}--\eqref{eq:instat4} has a unique mild solution 
$(u,p) \in C^0([0,T];L^2(0,1) \times L^2(0,1))$ and
$$
\max_{0 \le t \le T} \|u(t)\|^2 + \|p(t)\|^2 \le C \big( \|u_0\|^2 + \|p_0\|^2 \big).
$$
If in addition $u_0 \in H^1(0,1)$ and $p_0 \in H_0^1(0,1)$, then 
$(u,p)$ is a classical solution, 
in particular, $(u,p) \in C^1([0,T];L^2(0,1) \times L^2(0,1)) \cap C^0([0,T];H^1(0,1) \times H_0^1(0,1))$, and
$$
\max_{0 \le t \le T} \|\dt u(t)\|^2 + \|\dt p(t)\|^2 \le C \big( \|\dt u(0)\|^2 + \|\dt p(0)\|^2\big).
$$
The constant $C$ only depends on the bounds for the parameter $a$ and the time horizon $T$.
\end{lemma}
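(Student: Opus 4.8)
The plan is to recast \eqref{eq:instat1}--\eqref{eq:instat4} as an abstract Cauchy problem and to apply the theory of strongly continuous semigroups. On the Hilbert space $X = L^2(0,1) \times L^2(0,1)$ with its natural product inner product, I would introduce the operator $A(u,p) = (-\dx p - au,\, -\dx u)$ on the domain $D(A) = H^1(0,1) \times H_0^1(0,1)$; the condition $p \in H_0^1(0,1)$ encodes the boundary constraint \eqref{eq:instat3}, while no boundary condition is imposed on $u$. The system then reads $\dt y = Ay$, $y(0) = (u_0,p_0)$ with $y = (u,p)$, and $D(A)$ is dense in $X$.

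First I would check that $A$ is quasi-dissipative. For $(u,p) \in D(A)$, integration by parts gives $(\dx p, u) + (\dx u, p) = [pu]_0^1 = 0$, since $p$ vanishes at the endpoints, so that $\langle A(u,p),(u,p)\rangle = -(au,u)$. Consequently $\langle (A - \omega I)(u,p),(u,p)\rangle \le 0$ with $\omega = \|a\|_{L^\infty(0,1)}$ (and $\omega = 0$ if $a \ge 0$). Next I would verify the range condition, namely surjectivity of $\lambda I - A$ for some $\lambda > \omega$. Solving the first component of the resolvent equation $(\lambda I - A)(u,p) = (f,g)$ for $u = (f - \dx p)/(\lambda + a)$ and inserting it into the second reduces the problem to the scalar elliptic equation $-\dx\big(\tfrac{1}{\lambda + a}\dx p\big) + \lambda p = g - \dx\big(\tfrac{f}{\lambda+a}\big)$ with homogeneous Dirichlet data. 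Its weak form is bounded and coercive on $H_0^1(0,1)$ because $\tfrac{1}{\lambda + a} \ge \tfrac{1}{\lambda + \|a\|_{L^\infty(0,1)}} > 0$, so Lax--Milgram yields a unique $p \in H_0^1(0,1)$; the identity $\dx u = g - \lambda p \in L^2(0,1)$ then shows $u \in H^1(0,1)$, hence $(u,p) \in D(A)$.

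Granting density, quasi-dissipativity, and the range condition, the Lumer--Phillips theorem shows that $A$ generates a strongly continuous semigroup $(e^{tA})_{t \ge 0}$ with $\|e^{tA}\|_{X \to X} \le e^{\omega t}$. For $(u_0,p_0) \in X$ the mild solution is $y(t) = e^{tA}(u_0,p_0) \in C^0([0,T];X)$, and the semigroup bound gives the first stability estimate with $C = e^{2\omega T}$, which depends only on $\|a\|_{L^\infty(0,1)}$ and $T$; uniqueness is part of the semigroup statement. For $(u_0,p_0) \in D(A)$, that is $u_0 \in H^1(0,1)$ and $p_0 \in H_0^1(0,1)$, standard semigroup theory furnishes the classical solution with the stated regularity $C^1([0,T];X) \cap C^0([0,T];D(A))$. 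Finally, since the problem is autonomous and linear, $\dt y(t) = e^{tA}A(u_0,p_0)$, and applying the semigroup bound once more gives $\|\dt y(t)\| \le e^{\omega t}\|A(u_0,p_0)\| = e^{\omega t}\|\dt y(0)\|$, which is the second stability estimate.

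I expect the range condition to be the only genuinely technical step: one must eliminate the right variable so that the reduced problem becomes a well-posed coercive elliptic equation, and then confirm that the recovered pair lies in $D(A)$, with the correct boundary behaviour of $p$ and the $H^1$ regularity of $u$. The remaining ingredients---density of the domain, the integration-by-parts identity, and the passage from the semigroup bound to the two stability estimates---are routine once the semigroup has been constructed.
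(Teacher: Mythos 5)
Your proof is correct and takes exactly the route the paper intends: the paper's own proof is a one-line appeal to standard semigroup theory (citing Dautray--Lions and Pazy), and your argument simply supplies the details of that citation --- quasi-dissipativity of $A$, the range condition via Lax--Milgram for the reduced elliptic problem, and the Lumer--Phillips theorem, from which both a-priori estimates follow from the bound $\|e^{tA}\|\le e^{\omega t}$. There is no gap; you have merely made explicit what the paper leaves to the references.
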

\begin{proof}
The assertions follow from standard results in semi-group theory \cite{DL5,Pazy83}.
\end{proof}
Problems with time independent right hand side and boundary condition can be reduced to the homogeneous case by subtracting a solution of the following stationary problem 
\begin{align}
\dx \bar p + a \bar u &= \bar f \qquad \text{in } (0,1),  \label{eq:stat1} \\
           \dx \bar u &= \bar g \qquad \text{in } (0,1).  \label{eq:stat2}
\end{align}
with boundary conditions 
\begin{align}
  \bar p &= \bar h \qquad \text{on } \{0,1\}. \label{eq:stat3}
\end{align}
We use a bar symbol to denote functions that are independent of time. 
A similar problem will arise later in the stability analysis for the time-dependent problem. 
In principle, the solution could be computed analytically here. Instead,
we use an argument for the well-posedness of the stationary problem that can be generalized to multiple dimensions.
\begin{lemma} \label{lem:stat}
Let $0 < a_0 \le a(x) \le a_1$. 
Then for any $\bar f$, $\bar g \in L^2(0,1)$, and $\bar h \in \RR^2$ 
the problem \eqref{eq:stat1}--\eqref{eq:stat3} has a unique strong solution 
$(\bar u,\bar p) \in H^1(0,1) \times H_0^1(0,1)$
and 
$$
\|\bar u\|_1 + \|\bar p\|_1 \le C \big( \|\bar f\| + \|\bar g\| + |\bar h|\big)
$$ 
with a constant $C$ only depending on the bounds $a_0$ and $a_1$. 
\end{lemma}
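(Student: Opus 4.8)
The plan is to reduce the first-order system to a single scalar second-order elliptic equation for the pressure, to which the Lax--Milgram lemma applies, and afterwards to recover the velocity. This route has the advantage that it carries over verbatim to several space dimensions, where the reduced equation becomes $-\div(a^{-1}\grad \bar p)=\bar g-\div(a^{-1}\bar f)$ with Dirichlet data, so it is consistent with the remark preceding the statement.

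First I would remove the inhomogeneous boundary condition. Since $\bar h\in\RR^2$ prescribes the two endpoint values, I choose the affine lifting $\bar p_h(x)=h_0+(h_1-h_0)x$, which satisfies \eqref{eq:stat3} and $\|\bar p_h\|_1\le C|\bar h|$. Writing $\bar p=\bar p_h+\tilde p$ reduces the problem to finding the homogeneous part $\tilde p\in H_0^1(0,1)$. Next, using $0<a_0\le a\le a_1$ so that $a^{-1}\in L^\infty(0,1)$ with $a_1^{-1}\le a^{-1}\le a_0^{-1}$, I eliminate the velocity from \eqref{eq:stat1} by setting $\bar u=a^{-1}(\bar f-\dx \bar p)$. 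Inserting this into the weak form of \eqref{eq:stat2}, obtained by testing with $q\in H_0^1(0,1)$ and integrating by parts, leads to the variational problem of finding $\tilde p\in H_0^1(0,1)$ with
\[
\int_0^1 a^{-1}\,\dx\tilde p\,\dx q\,dx = \int_0^1 \bar g\,q\,dx + \int_0^1 a^{-1}\bar f\,\dx q\,dx - \int_0^1 a^{-1}\,\dx\bar p_h\,\dx q\,dx
\]
for all $q\in H_0^1(0,1)$.

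The bilinear form on the left is bounded by $a_0^{-1}$ and coercive on $H_0^1(0,1)$, since $\int_0^1 a^{-1}|\dx q|^2\,dx\ge a_1^{-1}\|\dx q\|^2\ge c\,\|q\|_1^2$ by the Poincar\'e inequality, while the right-hand side defines a bounded linear functional whose norm is controlled by $\|\bar f\|$, $\|\bar g\|$, and $|\bar h|$. Lax--Milgram then yields a unique $\tilde p$, hence a unique $\bar p=\bar p_h+\tilde p\in H^1(0,1)$ satisfying $\|\bar p\|_1\le C(\|\bar f\|+\|\bar g\|+|\bar h|)$ with $C$ depending only on $a_0$ and $a_1$.

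It remains to recover the velocity and check its regularity. I set $\bar u=a^{-1}(\bar f-\dx\bar p)\in L^2(0,1)$, which satisfies \eqref{eq:stat1} by construction and obeys $\|\bar u\|\le a_0^{-1}(\|\bar f\|+\|\bar p\|_1)$. The variational identity above states precisely that $\dx\bar u=\bar g$ holds in the distributional sense; since $\bar g\in L^2(0,1)$, this gives $\bar u\in H^1(0,1)$ and the estimate for $\|\bar u\|_1$, and recovers \eqref{eq:stat2}. Uniqueness of $(\bar u,\bar p)$ follows from that of $\tilde p$. The step requiring the most care is the bookkeeping of constants: one must verify that coercivity, boundedness, and the Poincar\'e constant enter only through $a_0$ and $a_1$, so that the final bound is independent of the data. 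I would also note that the identical conclusion can be reached through a mixed saddle-point formulation verified by the Brezzi conditions, which is the form best suited to the finite element discretization treated later.
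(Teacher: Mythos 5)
Your proof follows essentially the same route as the paper's: eliminate the velocity via $\bar u = a^{-1}(\bar f - \dx \bar p)$, solve the resulting scalar elliptic problem for the pressure by Lax--Milgram, and recover $\bar u$ (and its $H^1$-regularity) from the two equations. The only difference is that you spell out details the paper leaves implicit, notably the affine lifting of the boundary data $\bar h$ and the explicit tracking of the constants through coercivity and Poincar\'e, both of which are correct.
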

\begin{proof}
From the first equation, we get $\bar u = \frac{1}{a} (\bar f - \dx \bar p)$.
Inserting this into the second equation yields a second order elliptic problem for the pressure. 
Existence and uniqueness of a solution $\bar p \in H_0^1(0,1)$ 
and the a-priori estimate for $\bar p$ then follow from the Lax-Milgram theorem.
The result for $\bar u$ can finally be deduced from \eqref{eq:stat1} and \eqref{eq:stat2}.
\end{proof}

\section{Energy estimates} \label{sec:energy}

For a detailed stability analysis of the damped wave system, which is one focus of our investigations,
we will make use of the following family of generalized energies
\begin{align} \label{eq:energy}
\Ek(t) = \frac{1}{2} \big( \|\dtk u(t)\|^2 + \|\dtk p(t)\|^2 \big), \qquad k \ge 0.
\end{align}
As outlined in the introduction, some of these energies may have a physical interpretation, depending on the application context. 
The following estimates are standard and follow easily for sufficiently smooth solutions. 
To clarify the regularity requirements, we make a detailed statement and provide a short proof.
\begin{lemma}[Energy-identity]  \label{lem:ee1} $ $\\
Let $a \in L^\infty(0,1)$ and let $(u,p)$ be a solution of \eqref{eq:instat1}--\eqref{eq:instat3}
with finite energy $\Ek(0)$.
Then 
\begin{align}
\Ek(t) = \Ek(s) - \int_s^t \int_0^1 a(x) \; |\dtk u(x,r)|^2 dx\; dr, \qquad 0 \le s \le t \le T.
\end{align}
If in addition $a \ge 0$, then the respective energies decay monotonically.
\end{lemma}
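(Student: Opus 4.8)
The plan is to differentiate the energy $\Ek(t)$ in time and to exploit the structure of the system. Writing $U = \dtk u$ and $P = \dtk p$ for brevity, I would first differentiate the equations \eqref{eq:instat1}--\eqref{eq:instat2} $\k$ times with respect to $t$; since $a = a(x)$ is time-independent, this yields the analogous system $\dt U + \dx P + a U = 0$ and $\dt P + \dx U = 0$ for the differentiated quantities, together with the boundary condition $P = 0$ on $\{0,1\}$ inherited from \eqref{eq:instat3}.

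For a sufficiently smooth solution I would then compute
\begin{align*}
\frac{d}{dt} \Ek(t) = (\dt U, U) + (\dt P, P) = -(\dx P + a U, U) - (\dx U, P).
\end{align*}
The two terms carrying spatial derivatives combine, via integration by parts, into a single boundary contribution $-(\dx P, U) - (\dx U, P) = -\big[ P U \big]_0^1$, which vanishes because $P = 0$ at both endpoints. This leaves $\frac{d}{dt}\Ek(t) = -(a U, U) = -\int_0^1 a(x)\, |\dtk u|^2\, dx$, and integrating over $[s,t]$ yields the claimed identity. The monotonicity is then immediate whenever $a \ge 0$, since the integrand is nonnegative and the energy can therefore only decrease.

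The main difficulty lies in the regularity: the manipulation above presupposes enough smoothness to differentiate under the integral and to integrate by parts, whereas the hypothesis only guarantees a finite energy $\Ek(0)$. I would resolve this by a density argument. First I establish the identity for classical solutions, which exist for initial data in the appropriate domain of the generator by Lemma~\ref{lem:instat} applied to the differentiated system. Then, for general data yielding a finite $\Ek(0)$, I approximate by smooth data, apply the already-proven identity to the approximating solutions, and pass to the limit using the continuous-dependence estimate of Lemma~\ref{lem:instat}, which controls the energies uniformly in terms of the data. The finite-energy hypothesis is precisely what makes the right-hand integral well defined and the passage to the limit legitimate.
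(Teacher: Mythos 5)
Your proposal is correct and follows essentially the same route as the paper: an energy differentiation combined with integration by parts and the pressure boundary condition to obtain $\frac{d}{dt}\Ek = -(a\,\dtk u,\dtk u)$ for smooth solutions, reduction of the case $k\ge 1$ to the base case via the observation that $(\dtk u,\dtk p)$ solves the same system, and a final density/continuity argument to cover all finite-energy (mild) solutions. The only cosmetic difference is ordering: the paper proves $k=0$ first and then invokes it for the differentiated pair, while you carry out the computation directly on the differentiated system, which amounts to the same argument.
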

\begin{proof}
Let us first prove the estimate for $k=0$  under the assumption that $(u,p)$ is a classical solution. 
Then $\E^0(t)$ is continuously differentiable, and we have 
\begin{align*}
\frac{d}{dt} \E^0(t) 
&= (\dt u(t),u(t)) + (\dt p(t), p(t))  \\
&= -(\dx u(t), p(t)) - (\dx p(t), u(t)) - (a u(t), u(t)) \\
&= -( a u(t), u(t)).  
\end{align*}
In the last transformation, we used integration-by-parts for the term $(\dx u(t),p(t))$ and the boundary conditions \eqref{eq:instat3} for the pressure.
The first energy identity now follows by integration over time. 
Since any mild solution can be approximated by classical solutions, the estimate
extends to all mild solutions by continuity.
Now let $k \ge 1$ and assume that $\Ek(0)$ is finite. Then 
$(v,q) = (\dtk u,\dtk p)$ is a mild solution of the hyperbolic system \eqref{eq:instat1}--\eqref{eq:instat3}, and we may apply the result for $k=0$. 
\end{proof}

\begin{remark}
The energy identities reveal that regularity, but also lack of regularity, is preserved for all time, 
which is a manifestation of the hyperbolic character of the problem.
The identity for $\E^1(t)$ therefore requires at least a classical solution,
while the identity for $\E^0(t)$ holds for all mild solutions as well.
As a consequence of the theorem, one can see that the respective energies are continuously differentiable, 
whenever they are finite. 
\end{remark}

As a next step of our stability analysis, 
we now show that the energies decay exponentially, 
provided that the damping is effective everywhere in the domain. 
\begin{theorem}[Exponential decay of the energy] \label{thm:ee2} $ $\\
Let $(u,p)$ be a solution of \eqref{eq:instat1}--\eqref{eq:instat3}
with finite energy $E^k(0) < \infty$. 
Moreover, assume that $a_0 \le a(x) \le a_1$ for some constants $a_0,a_1>0$.  
Then 
$$
\Ek(t) \le 3 e^{-\alpha (t-s)} \Ek(s), \qquad  0 \le s \le t \le T,
$$ 
with decay rate 
$\alpha = \frac{4}{3} a_0^3/(8 a_0^2+ 4 a_0^2 a_1 + 2 a_0 a_1  + a_1^4)$.
%
\end{theorem}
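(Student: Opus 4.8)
The plan is to prove exponential decay by combining the dissipation identity from Lemma~\ref{lem:ee1} with a clever auxiliary functional that captures how the damping, acting only on $\dtk u$, forces decay of $\dtk p$ as well. Since by Lemma~\ref{lem:ee1} we already have $\frac{d}{dt}\Ek(t) = -\int_0^1 a\,|\dtk u|^2\,dx \le -a_0\|\dtk u(t)\|^2$, the dissipation controls only the $u$-component. The obstacle is that this alone cannot yield exponential decay of the full energy: if $\dtk u \equiv 0$ at some instant, the energy identity gives no information about decay of $\|\dtk p\|$. The standard remedy, following the approach attributed in the text to \cite{BabinVishik83,Zuazua88}, is to introduce a modified energy $\hat E^k(t) = \Ek(t) + \eps\, F(t)$, where $F(t)$ is a cross-term designed so that its time derivative produces a negative multiple of $\|\dtk p\|^2$.

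First I would, since it suffices by translation invariance in time to treat $k=0$ (the pair $(\dtk u,\dtk p)$ again solves the system), reduce to $k=0$ and work with $(u,p)$. Then I would choose the correction functional. A natural candidate exploiting the structure $\dt p = -\dx u$ and the Dirichlet condition $p=0$ on the boundary is something like $F(t) = (\dx p(t),\, w(t))$ or, more robustly, $F(t) = -(u(t), \Phi(t))$ where $\Phi$ solves an auxiliary elliptic problem linking $p$ back to $u$; the stationary solver of Lemma~\ref{lem:stat} is exactly the tool suggested for defining such a $\Phi$ in a way that generalizes to higher dimensions. The goal is to arrange that $\frac{d}{dt}F(t)$ contains a term $-c\|p(t)\|^2$ (or $-c\|\dx u\|^2$, equivalently $-c\|\dt p\|^2$) while all remaining terms can be absorbed, using Young's inequality, into the dissipation $-a_0\|u\|^2$ with controllable constants.

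Next I would differentiate $\hat E(t) = E^0(t) + \eps F(t)$ and estimate. Using the energy identity for the $E^0$ part and the product rule together with the PDEs \eqref{eq:instat1}--\eqref{eq:instat2} for the $F$ part, integration by parts and the boundary conditions should yield a bound of the form $\frac{d}{dt}\hat E(t) \le -\eps c_1 \|p(t)\|^2 - (a_0 - \eps c_2)\|u(t)\|^2 + \eps c_3\|u(t)\|^2$, after Young's inequality splits the indefinite cross-terms. Choosing $\eps$ small enough (this is where the explicit dependence on $a_0,a_1$ enters, and where the particular value $\alpha = \frac{4}{3}a_0^3/(8a_0^2 + 4a_0^2 a_1 + 2a_0 a_1 + a_1^4)$ will be pinned down) makes the right-hand side bounded above by $-\beta \hat E(t)$ for some $\beta>0$. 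The main bookkeeping obstacle is tracking all constants so that the correction does not overwhelm the dissipation and so that $\hat E$ stays equivalent to $E^0$.

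Finally I would close the argument with an equivalence estimate: by Cauchy--Schwarz and the a~priori bounds, $\tfrac{1}{3}\Ek \le \hat E \le 3\,\Ek$ (or similar two-sided bounds with the stated constant $3$), for $\eps$ in the admissible range. Grönwall's inequality applied to $\frac{d}{dt}\hat E \le -\beta \hat E$ then gives $\hat E(t) \le e^{-\beta(t-s)}\hat E(s)$, and translating back through the equivalence yields $\Ek(t) \le 3\,e^{-\alpha(t-s)}\Ek(s)$ with $\alpha = \beta$. To handle the case of a general mild solution with merely finite $\Ek(0)$, I would first establish the differential inequality for classical solutions and then pass to the limit by the density and continuity argument already invoked in the proof of Lemma~\ref{lem:ee1}. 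I expect the optimization of $\eps$ to recover the precise decay rate to be the most delicate computational step, though it is ultimately routine once the modified energy is correctly set up.
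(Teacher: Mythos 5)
Your plan follows essentially the same strategy as the paper: a modified energy $\Ek + \eps\,(\text{cross term})$, a two-sided equivalence estimate, a differential inequality closed by Gr\"onwall, and a density argument for mild solutions. The organization differs, though, and it is worth comparing. The paper does \emph{not} prove $k=0$ directly: it proves $k=1$ first, with the cross term $(\dt u, u)$, and then obtains $k=0$ by an antiderivative construction, applying the $k=1$ result to $U(t)=\int_0^t u\,ds-\bar u$, $P(t)=\int_0^t p\,ds -\bar p$, where $(\bar u,\bar p)$ solves the stationary problem of Lemma~\ref{lem:stat} with data $(u_0,p_0)$. Your route --- reduce everything to $k=0$ and build the correction $F(t)=-(u(t),\Phi(t))$ from the stationary solver --- is in fact the same functional in disguise: one checks that $(U(t),P(t))$ is exactly minus the stationary solution with data $(u(t),p(t))$, so your $\Phi$ is $-U$ and your $\hat E$ coincides with the paper's $\E^1_\eps$ evaluated on $(U,P)$. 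So the two routes are mathematically equivalent; yours states the Lyapunov functional intrinsically at the $k=0$ level, the paper's keeps the computation at $k=1$ where the cross term needs no auxiliary elliptic solve. (Your first candidate $F=(\dx p, w)$ should be discarded: $\dx p$ is not controlled by the energy.)

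One ingredient in your plan is under-specified and is precisely where the real work lies: the claim that the indefinite terms "can be absorbed, using Young's inequality, into the dissipation." Young's inequality alone cannot do this, because the cross term produces $(a\,\dt u, u)$ (equivalently $(a u, U)$ in your formulation), and $\|u\|$ (resp.\ $\|U\|$) is \emph{not} one of the energy variables. Both the absorption step and the equivalence $\tfrac12 \E^1 \le \E^1_\eps \le \tfrac32 \E^1$ hinge on a generalized Poincar\'e inequality (the paper's Lemmas~\ref{lem:a1}--\ref{lem:a2}), namely $\|u\| \le \tfrac{a_1}{a_0}\|\dt p\| + \tfrac{1}{a_0}\|\dt u\|$, whose proof uses the uniform lower bound $a \ge a_0>0$, the boundary condition on $p$, and the equation itself. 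In your $k=0$ formulation the a-priori bound of Lemma~\ref{lem:stat} supplies a control $\|\Phi\| \le C(\|u\|+\|p\|)$, so qualitative exponential decay would still follow; but that constant is not explicit, so to obtain the theorem \emph{as stated} --- the factor $3$ and the specific rate $\alpha = \tfrac43 a_0^3/(8a_0^2+4a_0^2a_1+2a_0a_1+a_1^4)$ --- you must prove the explicit-constant version above. That step is a genuine structural ingredient rather than routine bookkeeping, and it is the main thing missing from your outline.
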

\begin{proof}
The proof is rather technical and therefore presented in detail in the appendix. 
Let us however sketch the basic steps already here.
We start with the case $k=1$.
Similar as in \cite{Zuazua88}, we define for $\eps>0$ the modified energies
\begin{align}
\E^1_\eps(t) = \E^1(t) + \eps (\dt u(t), u(t)).
\end{align}
As proven in Lemma~\ref{lem:a3}, the two energies $\E(t)$ and $\E_\eps(t)$ are equivalent for 
sufficiently small $\eps$, more precisely, $\frac{1}{2} \E^1(t) \le \E^1_\eps(t) \le \frac{3}{2} \E^1(t)$.
Under a further restriction on $\eps$, one can then show that
$\frac{d}{dt} \E^1_\eps(t) \le -\frac{2}{3}\eps \E^1_\eps(t)$, from which the result for $k=1$ follows; 
see Lemma~\ref{lem:a4}, where the precise definition of $\eps$ and $\alpha$ is given.
The case $k = 0$ can be deduced from the one for $k=1$ by an explicit construction,
see Section~\ref{sec:a3}.
The estimate for $k \ge 2$ finally follows by applying the result for $k=0$ to 
the functions $(\dtk u, \dtk p)$. 
\end{proof}
\begin{remark} \label{rem:asymptrate}
Let us define $c=a_0/a_1$. Then the decay rate can be expressed as 
\begin{align*}
\alpha = \frac{4}{3} \frac{c^2 a_0}{8 c^2 + 2 c + 4 c^2 a_1 + a_1^2} = \frac{4}{3} \frac{c^3}{ 8c^2/a_1 + 2c/a_1 + 4c^2 + a_1}. 
\end{align*}
From the first expression, one can see that $\alpha \ge c' a_0$ whenever $a_1 \le 1$, 
and from the second form we may deduce that $\alpha \ge c'' /a_1$ for $a_1 \ge 1$.  
In summary, we thus obtain $\alpha \ge  \min\{c' a_0,c''/a_1\}$ with constants $c',c''$ only depending on the ratio $a_0/a_1$. 
This estimate shows the correct dependence on the absolute size of the damping parameter, which can be verified analytically for the case of a constant damping parameter; see e.g. \cite{CoxZuazua94} and also compare to the numerical results presented in Section~\ref{sec:num}.
\end{remark}

\section{Variational formulations} \label{sec:var}

For the design and the analysis of appropriate discretization schemes, 
it will be useful to characterize the solutions of the damped wave system 
via variational principles which are suitable for a systematic approximation.

\subsection{Weak formulation of the stationary problem}
Testing \eqref{eq:stat1} and \eqref{eq:stat2} with appropriate test functions, 
using integration-by-parts for the first equation and the boundary conditions \eqref{eq:stat3}, 
we arrive at the following weak form of the stationary problem. For ease of presentation, 
we set $\bar h=0$ here.
\begin{problem} \label{prob:stat}
Find $\bar u \in H^1(0,1)$ and $\bar p \in L^2(0,1)$ such that 
\begin{align*}
-(\bar p, \dx \bar v) + (a \bar u, \bar v) &= (\bar f, \bar v) && \text{for all } \bar v \in H^1(0,1) \\
(\dx \bar u, \bar q) &= (\bar g, \bar q) && \text{for all } \bar q \in  L^2(0,1).
\end{align*}
\end{problem}
\noindent 
As before, we use the bar symbol to emphasize that the functions are independent of time. The existence of a unique weak solution follows almost directly from Lemma~\ref{lem:stat}. 
\begin{lemma} \label{lem:stat_weak}
Let $0<a_0 \le a(x) \le a_1$. Then for any $\bar f$, $\bar g \in L^2(0,1)$, Problem~\ref{prob:stat} has a unique solution which coincides with the strong solution of problem~\eqref{eq:stat1}--\eqref{eq:stat3} with $\bar h=0$.
As a consequence, we have $\|\bar u\|_1 + \|\bar p\|_1 \le C ( \|\bar f\| + \|\bar g\|)$ with $C$ depending on $a_0$ and $a_1$.
\end{lemma}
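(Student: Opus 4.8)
The plan is to transfer everything from the strong theory already established in Lemma~\ref{lem:stat}. For \emph{existence}, I would apply Lemma~\ref{lem:stat} with $\bar h=0$ to obtain a strong solution $(\bar u,\bar p)\in H^1(0,1)\times H_0^1(0,1)$ of \eqref{eq:stat1}--\eqref{eq:stat3}. Testing \eqref{eq:stat1} with $\bar v\in H^1(0,1)$ and integrating by parts in the term $(\dx \bar p,\bar v)$, the boundary contribution $[\bar p\,\bar v]_0^1$ vanishes because $\bar p\in H_0^1(0,1)$, which produces the first equation of Problem~\ref{prob:stat}; the second equation is simply \eqref{eq:stat2} tested with $\bar q\in L^2(0,1)$. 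Thus the strong solution is a weak solution, and it already satisfies the asserted bound as the special case $\bar h=0$ of the estimate in Lemma~\ref{lem:stat}.

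The more delicate point is \emph{uniqueness}, and this is where I expect the main obstacle to lie: the weak formulation only requires $\bar p\in L^2(0,1)$ and imposes the Dirichlet condition on $\bar p$ only implicitly, so some regularity has to be recovered before the strong uniqueness result can be applied. By linearity it suffices to show that a weak solution with $\bar f=\bar g=0$ vanishes. The second equation forces $\dx \bar u=0$ in $L^2(0,1)$, so $\bar u$ is constant. Testing the first equation with $\bar v\in C_c^\infty(0,1)$ gives $(\bar p,\dx \bar v)=(a\bar u,\bar v)$, which says precisely that $\bar p$ has a weak derivative $\dx \bar p=-a\bar u\in L^2(0,1)$; hence $\bar p\in H^1(0,1)$.

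Knowing $\bar p\in H^1(0,1)$, I would integrate by parts back in the first weak equation for general $\bar v\in H^1(0,1)$. Using $\dx \bar p=-a\bar u$ all interior contributions cancel, leaving $[\bar p\,\bar v]_0^1=0$ for every $\bar v\in H^1(0,1)$; choosing $\bar v$ with arbitrary boundary values yields $\bar p(0)=\bar p(1)=0$, so $\bar p\in H_0^1(0,1)$. Now $(\bar u,\bar p)$ solves the homogeneous strong problem, and the uniqueness part of Lemma~\ref{lem:stat} gives $\bar u=\bar p=0$; alternatively, integrating $\dx \bar p=-a\bar u$ over $(0,1)$ and using $\bar p(0)=\bar p(1)=0$ together with $\int_0^1 a\,dx\ge a_0>0$ forces the constant $\bar u$ to vanish, whence $\bar p=0$ as well. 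With existence and uniqueness established, the weak solution coincides with the strong solution, so the estimate $\|\bar u\|_1+\|\bar p\|_1\le C(\|\bar f\|+\|\bar g\|)$ is inherited directly from Lemma~\ref{lem:stat}.
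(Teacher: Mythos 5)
Your proposal is correct and follows essentially the same route as the paper: existence by citing Lemma~\ref{lem:stat}, and uniqueness by recovering the weak differentiability of $\bar p$ from the first variational equation and then invoking the uniqueness of strong solutions. The paper's proof is just a terser version of yours; your additional steps (recovering the boundary condition $\bar p(0)=\bar p(1)=0$ by integrating back with general $\bar v\in H^1(0,1)$, and reducing to homogeneous data by linearity) merely make explicit what the paper leaves implicit in the phrase ``therefore a strong solution.''
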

\begin{proof}
Existence of a weak solution follows from Lemma~\ref{lem:stat}. 
Now let $(\bar u, \bar p)$ be a weak solution. Then the first equation implies that $\bar p$ is weakly differentiable 
and therefore a strong solution, which was shown to be unique.  
\end{proof}

\begin{remark}
The well-posedness could be established here also via the abstract theory for 
mixed variational problems \cite{Brezzi74}. 
These kind of arguments will be utilized later for analyzing corresponding Galerkin approximations. 
\end{remark}

\subsection{Weak form of the instationary problem}
With a similar derivation as for the stationary problem above, 
one arrives at the following weak formulation for the time-dependent damped wave system.
\begin{problem} \label{prob:instat}
Find $(u,p) \in L^2(0,T;H^1(0,1) \times L^2(0,1)) \cap H^1(0,T;H^1(0,1)' \times L^2(0,1))$ 
with initial values $u(0)=u_0$ and $p(0)=p_0$, such that for a.e. $t \in (0,T)$ there holds
\begin{align*}
(\dt u(t), \bar v) - (p(t), \dx \bar v) + (a u(t), \bar v) &= 0 && \text{for all } \bar v \in H^1(0,1)\\
(\dt p(t), \bar q) + (\dx u(t), \bar q) &= 0 && \text{for all } \bar q \in L^2(0,1).
\end{align*}
\end{problem}
\begin{remark}
Here $H^1(0,1)'$ denotes the dual space of $H^1(0,1)$ and $(\dt u(t),\bar v)$ has to be understood as 
the duality product on $H^1(0,1)' \times H^1(0,1)$, accordingly.
The function spaces are chosen, such that all terms in the 
variational principle make sense. 
\end{remark}

Similar as before, the well-posedness of the weak formulation can be deduced from the previous results.
about existence of classical solutions.
\begin{lemma} \label{lem:instat_weak}
For any $u_0 \in H^1(0,1)$ and $p_0 \in H_0^1(0,1)$ the above weak formulation has a unique solution $(u,p)$
which coincides with the classical solution of problem \eqref{eq:instat1}--\eqref{eq:instat4}. 
In particular, the a-priori estimates of Lemma~\ref{lem:instat} are valid.  
\end{lemma}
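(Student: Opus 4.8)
The goal is to show that the weak formulation in Problem~\ref{prob:instat} is equivalent to the classical formulation whose well-posedness was already established in Lemma~\ref{lem:instat}.

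The plan is to proceed by showing the two implications separately. First, I would take the classical solution $(u,p)$ guaranteed by Lemma~\ref{lem:instat} for data $u_0 \in H^1(0,1)$, $p_0 \in H_0^1(0,1)$, and verify that it is a weak solution. Since $(u,p) \in C^1([0,T];L^2 \times L^2) \cap C^0([0,T];H^1 \times H_0^1)$, multiplying \eqref{eq:instat1} by a test function $\bar v \in H^1(0,1)$, integrating over $(0,1)$, and using integration-by-parts on the term $(\dx p(t),\bar v)$ together with the boundary condition \eqref{eq:instat3} (which eliminates the boundary term since $p=0$ on $\{0,1\}$) yields the first variational equation. Testing \eqref{eq:instat2} with $\bar q \in L^2(0,1)$ gives the second directly without any integration-by-parts. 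One checks that the regularity of the classical solution guarantees membership in the asserted function spaces, in particular that $\dt u(t) \in H^1(0,1)' $ via the identity $\dt u(t) = -\dx p(t) - a u(t)$, interpreted in the dual sense through the integration-by-parts relation.

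For the converse, I would show uniqueness of the weak solution, from which it follows that any weak solution must coincide with the classical one just constructed. The cleanest route is to use linearity: if $(u_1,p_1)$ and $(u_2,p_2)$ are two weak solutions with the same data, their difference $(u,p)$ solves the homogeneous problem with zero initial values. Choosing $\bar v = u(t)$ and $\bar q = p(t)$ as test functions (admissible since $u(t) \in H^1 \subset L^2$ and $p(t) \in L^2$), adding the two equations, and noting that the coupling terms cancel—$(p(t),\dx u(t))$ against $(\dx u(t),p(t))$—gives $\frac{1}{2}\frac{d}{dt}(\|u(t)\|^2 + \|p(t)\|^2) = -(a u(t),u(t)) \le 0$. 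With vanishing initial energy and $a \ge 0$, a Gronwall-type argument forces $(u,p) \equiv 0$, establishing uniqueness. Consequently the classical solution is the unique weak solution, and the a-priori estimates of Lemma~\ref{lem:instat} transfer verbatim.

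The main obstacle I anticipate is the rigorous justification of the energy argument at the low regularity available to an arbitrary weak solution: the manipulation $\frac{1}{2}\frac{d}{dt}\|u(t)\|^2 = (\dt u(t), u(t))$ requires care because $\dt u(t)$ lives only in $H^1(0,1)'$, so the pairing $(\dt u(t), u(t))$ must be read as a duality product, and the chain rule needs to be justified via the standard result on $t \mapsto \|u(t)\|^2$ being absolutely continuous for functions in the relevant Bochner spaces (the Lions–Magenes lemma). This is a routine but not entirely trivial point, and I would invoke the standard embedding and integration-by-parts theorem for vector-valued functions to handle it, rather than grind through the details.
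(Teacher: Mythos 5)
Your proposal is correct, but it takes a genuinely different route from the paper for the uniqueness half. On existence you and the paper agree: the classical solution supplied by Lemma~\ref{lem:instat} is verified (in the paper, simply asserted) to satisfy the weak formulation, with the integration-by-parts and the boundary condition \eqref{eq:instat3} entering exactly where you place them. For uniqueness, however, the paper argues that any weak solution is also a mild solution and then appeals to the uniqueness of mild solutions from semigroup theory, whereas you give a direct energy argument: the difference of two weak solutions solves the homogeneous problem with zero initial data, testing with $\bar v = u(t)$ and $\bar q = p(t)$ cancels the coupling terms, and the Lions--Magenes lemma in the Gelfand triple $H^1(0,1) \subset L^2(0,1) \subset H^1(0,1)'$ justifies $\langle \dt u(t),u(t)\rangle = \tfrac{1}{2}\tfrac{d}{dt}\|u(t)\|^2$ at the available regularity, after which Gronwall's inequality forces the difference to vanish. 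Your route is more self-contained and slightly more general: it avoids the identification of weak with mild solutions (which the paper leaves unproven and which itself requires a density or duality argument), and the Gronwall step makes it work for any $a \in L^\infty(0,1)$ without a sign condition. The paper's route is shorter and recycles the semigroup machinery of Section~\ref{sec:prelim}; its cost is precisely the implicit step you make explicit, while the one technical ingredient your proof needs that the paper's does not is the chain rule for $t \mapsto \|u(t)\|^2$ under the stated Bochner-space regularity, which you correctly identify and resolve by the standard result rather than leaving it as a gap.
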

\begin{proof}
Existence of a weak solution follows by Lemma~\ref{lem:instat}. 
Any weak solution also is a mild solution, which was shown to be unique; this yields the uniqueness. 
\end{proof}

\begin{remark}
Let us emphasize that either solution component can only be regular, if both initial data are regular. 
This can easily be seen from analytical solutions for the case of a constant damping parameter \cite{CoxZuazua94}.
Therefore, the regularity of both initial values is already required to show the well-posedness of the weak formulation.
Problem~\ref{prob:instat} thus actually provides a weak charaterization of classical solutions.
\end{remark}

\section{Galerkin semi-discretization} \label{sec:semi}

For the discretization in space, we consider Galerkin approximations for the weak formulations stated in the previous section based on approximation spaces
\begin{align*}
 V_h \subset H^1(0,1) \qquad \text{and} \qquad Q_h \subset L^2(0,1),
\end{align*}
which are assumed to be finite dimensional without further mentioning.
We start with considering the stationary case and then turn to the time dependent problem.

\subsection{Discretization of the stationary problem}

The Galerkin approximation of the weak form of the stationary system given in Problem~\ref{prob:stat} reads
\begin{problem} \label{prob:stath}
Find $\bar u_h \in V_h \subset H^1(0,1)$ and $\bar p_h \in Q_h \subset L^2(0,1)$ such that 
\begin{align*}
 - (\bar p_h, \dx \bar v_h)+(a \bar u_h, \bar v_h) &= (\bar f, \bar v_h) && \text{for all }
 \bar v_h \in V_h \\
 (\dx \bar u_h, \bar q_h) &= (\bar g, \bar q_h) && \text{for all } \bar q_h \in Q_h.
\end{align*}
\end{problem}
In order to establish the well-posedness of the Galerkin discretization, some compatibility conditions for the approximation spaces are required. We have
\begin{lemma} \label{lem:stath}
Let $0 < a_0 \le a(x) \le a_1$ and $Q_h \subset \dx V_h$. 
Then for any $\bar f, \bar g \in L^2(0,1)$, Problem~\ref{prob:stath} has a unique solution 
$(\bar u_h,\bar p_h)$ and $\|\bar u_h\|_1 + \|\bar p_h\| \le C \big( \|\bar f\| + \|\bar g\| \big)$ with a constant $C$ that only depends on the bounds $a_0$ and $a_1$.
\end{lemma}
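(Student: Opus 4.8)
The plan is to regard Problem~\ref{prob:stath} as a mixed variational problem of saddle--point type and to analyze it within the Brezzi framework \cite{Brezzi74}, as already anticipated in the remark following Lemma~\ref{lem:stat_weak}. Since $V_h$ and $Q_h$ are finite dimensional, the discrete system is square, so existence is equivalent to uniqueness and it suffices to establish the latter together with the a~priori bound. For uniqueness I would set $\bar f=\bar g=0$, test the first equation with $\bar v_h=\bar u_h$ and the second with $\bar q_h=\bar p_h\in Q_h$; eliminating the coupling term $(\bar p_h,\dx \bar u_h)$ between the two tested identities leaves $(a\bar u_h,\bar u_h)=0$, and the lower bound $a\ge a_0>0$ forces $\bar u_h=0$. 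The first equation then reduces to $(\bar p_h,\dx \bar v_h)=0$ for all $\bar v_h\in V_h$, that is $\bar p_h\perp \dx V_h$; here the compatibility condition enters decisively, since $\bar p_h\in Q_h\subset \dx V_h$ is orthogonal to all of $\dx V_h$ and hence to itself, whence $\bar p_h=0$.

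For the a~priori estimate I would test again with $(\bar u_h,\bar p_h)$ to obtain the energy identity $(a\bar u_h,\bar u_h)=(\bar f,\bar u_h)+(\bar g,\bar p_h)$, and therefore $a_0\|\bar u_h\|^2\le \|\bar f\|\,\|\bar u_h\|+\|\bar g\|\,\|\bar p_h\|$. To close this I need a bound on $\|\bar p_h\|$, which is exactly where the inf--sup structure is used: because $\bar p_h\in Q_h\subset \dx V_h$, there is $w_h\in V_h$ with $\dx w_h=\bar p_h$, and inserting $\bar v_h=w_h$ into the first equation turns the pressure term into $\|\bar p_h\|^2$, yielding $\|\bar p_h\|^2=(a\bar u_h,w_h)-(\bar f,w_h)\le (a_1\|\bar u_h\|+\|\bar f\|)\,\|w_h\|_1$. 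Provided $w_h$ can be chosen with $\|w_h\|_1\le C_\beta\|\bar p_h\|$ for a constant $C_\beta$ independent of the discretization, the two inequalities combine into a quadratic inequality for $\|\bar u_h\|$ from which $\|\bar u_h\|+\|\bar p_h\|\le C(\|\bar f\|+\|\bar g\|)$ with $C=C(a_0,a_1)$ follows at once. The remaining derivative contribution $\|\dx \bar u_h\|$ to $\|\bar u_h\|_1$ is then recovered from the second equation $(\dx \bar u_h,\bar q_h)=(\bar g,\bar q_h)$, which identifies $\dx \bar u_h$ on the test space $Q_h$; in combination with the compatibility condition this controls $\|\dx \bar u_h\|$ by $\|\bar g\|$ and completes the $H^1$ bound.

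The genuinely delicate point --- and what I expect to be the main obstacle --- is the \emph{uniformity} of the inf--sup constant $C_\beta$, equivalently the existence of a right inverse of $\dx\colon V_h\to Q_h$ that is bounded independently of $h$. I would derive it from the inf--sup condition for the continuous operator $\dx\colon H^1(0,1)\to L^2(0,1)$: given $\bar p_h\in Q_h\subset \dx V_h$, among all preimages $w_h\in V_h$ with $\dx w_h=\bar p_h$ I would select the one of vanishing mean (subtracting the constant, which the standard approximation spaces contain). The Poincar\'e--Wirtinger inequality on the unit interval then gives $\|w_h\|\le C\,\|\dx w_h\|=C\,\|\bar p_h\|$, so that $\|w_h\|_1\le C_\beta\|\bar p_h\|$ with $C_\beta$ depending only on the fixed Poincar\'e constant of $(0,1)$, and in particular not on $h$.

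This establishes the uniform inf--sup property. Together with the coercivity estimate $(a v_h,v_h)\ge a_0\|v_h\|^2$, which on the kernel $\{v_h\in V_h:\dx v_h\perp Q_h\}$ reduces to control in the full $H^1$--norm, it places Problem~\ref{prob:stath} squarely within the Brezzi framework and delivers the asserted stability estimate with a constant depending only on the bounds $a_0$ and $a_1$; the continuity constants of the two bilinear forms are $a_1$ and $1$, so no further discretization--dependent quantities enter.
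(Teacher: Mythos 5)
Your proof is correct and follows essentially the same route as the paper's: the paper verifies Brezzi's two conditions --- the inf--sup condition for $b(\bar v_h,\bar q_h)=(\dx \bar v_h,\bar q_h)$ via the explicitly constructed antiderivative $\bar v_h=\int_0^\cdot \bar q_h$, and coercivity of $(a\,\cdot,\cdot)$ on the discrete kernel --- and then cites the splitting lemma \cite{Brezzi74}, whereas you carry out the resulting stability estimate by hand (uniqueness by testing, pressure bound via a preimage $w_h\in V_h$ with $\dx w_h=\bar p_h$). The decisive ingredient is identical in both arguments: a preimage of $\bar p_h$ under $\dx$ with $\|w_h\|_1\le C\|\bar p_h\|$ and $C$ independent of the discretization.

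Two caveats, both of which are shared by the paper's own proof and are harmless in all later applications, where $Q_h=\dx V_h$ and $1\in V_h$ are assumed (cf.\ Theorem~\ref{thm:ee2h} and Lemma~\ref{lem:approx}): (i) your normalization of $w_h$ to zero mean keeps $w_h$ inside $V_h$ only if $1\in V_h$, exactly as the paper's chosen test function $\int_0^\cdot\bar q_h$ lies in $V_h$ only under the same condition --- the stated hypothesis $Q_h\subset \dx V_h$ alone guarantees \emph{some} preimage in $V_h$, but not the normalized one; (ii) your final step, recovering $\|\dx \bar u_h\|$ from the second equation, requires $\dx \bar u_h\in Q_h$, i.e.\ the reverse inclusion $\dx V_h\subset Q_h$; with only $Q_h\subset \dx V_h$ the second equation controls merely the $Q_h$-component of $\dx\bar u_h$. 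The paper's proof needs the same reverse inclusion for its kernel characterization $N_h=\{\bar v_h\in V_h:\dx\bar v_h=0\}$, on which its coercivity estimate in the full $H^1$-norm rests. So your argument is complete precisely under the hypotheses under which the paper's own argument is, namely $Q_h=\dx V_h$ and $1\in V_h$.
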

\begin{proof}
Define $b(\bar v_h, \bar q_h) = (\dx \bar v_h,\bar q_h)$.
Then, by choosing $\bar v_h(x) = \int_0^x q_h(s) ds$, we get
\begin{align*}
\sup_{\bar v_h \in V_h} \frac{b(\bar v_h,\bar q_h)}{\|\bar v_h\|_1} 
\ge \frac{(\dx \int_0^\cdot \bar q_h,\bar q_h)}{\|\int_0^\cdot \bar q_h\|_1}   
\ge \frac{1}{\sqrt{2}}\|\bar q_h\| \qquad \text{for all } \bar q_h \in Q_h.
\end{align*}
Next, observe that $N_h=\{ \bar v_h \in V_h: b(\bar v_h,\bar q_h)=0\} = \{\bar v_h \in V_h: \dx \bar v_h = 0\}$. 
Hence 
\begin{align*}
a(\bar v_h,\bar v_h) = (a \bar v_h,\bar v_h) \ge a_0 \|\bar v_h\|^2 = a_0 \|\bar v_h\|_1^2  \qquad \text{for all } \bar v_h \in N_h.
\end{align*}
The assertions then follow from Brezzi's splitting lemma~\cite{Brezzi74}.
\end{proof}

\begin{remark}
The Galerkin approximation of the stationary problem defines a linear mapping $R_h:(\bar u, \bar p) \mapsto (\bar u_h,\bar p_h)$ usually called  \emph{elliptic projection} or \emph{Ritz projector}. 
This operator is frequently used in the error analysis of approximation schemes for the time-dependent problem \cite{Dupont73,Wheeler73}. 
We will later use instead the $L^2$-projections $\pi_h : L^2(0,1) \to V_h$ and $\rho_h: L^2(0,1) \to Q_h$ defined by
\begin{align*}
(\pi_h \bar  u, \bar v_h) &= (\bar u,\bar v_h) \qquad \text{for all } \bar v_h \in V_h \\
(\rho_h \bar p, \bar q_h) &= (\bar p,\bar q_h) \qquad \text{for all } \bar q_h \in Q_h.
\end{align*}
An advantage of the $L^2$-projections is that less regularity of the solution will be required for establishing convergence of the method; see Remarks~\ref{rem:regularity} and \ref{rem:regularity2} below and let us also refer to  \cite{Baker76,JenkinsRiviereWheeler02} for related ideas. 
\end{remark}

\subsection{Galerkin approximation of the instationary problem}
Recall that $V_h \subset H^1(0,1)$ and $Q_h \subset L^2(0,1)$ are assumed to be 
finite dimensional subspaces. The semi-discretization of the weak form for the time dependent problem 
then reads
\begin{problem} \label{prob:instath}
Find $(u_h, p_h) \in H^1(0,T;V_h \times Q_h)$ 
with $u_h(0) = \pi_h u_0$ and $p_h(0)=\rho_h p_0$, 
such that for a.e. $t \in (0,T)$ there holds
\begin{align*} 
 (\dt u_h(t), \bar v_h) - (p_h(t), \dx\bar v_h) + (a u_h(t), \bar v_h) &= 0 && \text{for all } \bar v_h \in V_h \\
(\dt p_h(t), \bar q_h) + (\dx u_h(t), \bar q_h) &= 0 && \text{for all } \bar q_h \in Q_h.
\end{align*}
\end{problem}
By choosing some bases for the spaces $V_h$ and $Q_h$, this system can be transformed into a linear ordinary differential equation, and we obtain
\begin{lemma} \label{lem:instath}
Let $a \in L^\infty(0,1)$.
Then for any $u_0,p_0 \in L^2(0,1)$, 
Problem~\ref{prob:instath} has a unique solution $(u_h,p_h)$ and 
$\|u_h(t)\|^2 + \|p_h(t)\|^2 \le C\big( \|u_0\|^2 + \|p_0\|^2 \big)$ for all $0 \le t \le T$ with constant $C$ only depending on the bounds for $a$ and the time horizon $T$.
\end{lemma}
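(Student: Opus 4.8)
The plan is to reduce the semi-discrete system to a finite-dimensional linear ODE and then apply an energy argument together with standard ODE theory. First I would fix bases $\{\phi_i\}_{i=1}^{m}$ of $V_h$ and $\{\psi_j\}_{j=1}^{n}$ of $Q_h$ and write $u_h(t)=\sum_i \mathbf{u}_i(t)\phi_i$ and $p_h(t)=\sum_j \mathbf{p}_j(t)\psi_j$. Substituting into the two equations of Problem~\ref{prob:instath} and testing against the basis functions yields a coupled linear system of the form
\begin{align*}
M_u\, \dot{\mathbf{u}}(t) + A\, \mathbf{u}(t) - B^\top \mathbf{p}(t) &= 0,\\
M_p\, \dot{\mathbf{p}}(t) + B\, \mathbf{u}(t) &= 0,
\end{align*}
where $M_u$, $M_p$ are the (symmetric positive definite) mass matrices, $A$ is the symmetric positive semidefinite matrix encoding $(a\,\cdot\,,\cdot)$, and $B$ encodes $(\dx\,\cdot\,,\cdot)$. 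Since $M_u$ and $M_p$ are invertible, this can be rewritten as $\dot{\mathbf{y}}(t)=L\,\mathbf{y}(t)$ with $\mathbf{y}=(\mathbf{u},\mathbf{p})^\top$ and a constant matrix $L$. The Picard--Lindelöf theorem then gives a unique global solution $\mathbf{y}\in C^1([0,T];\RR^{m+n})$ for the prescribed initial data, which translates back into a unique $(u_h,p_h)\in H^1(0,T;V_h\times Q_h)$; the initial conditions $u_h(0)=\pi_h u_0$ and $p_h(0)=\rho_h p_0$ are well defined because the $L^2$-projections are bounded on $L^2(0,1)$.

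For the a-priori bound I would avoid the matrix representation and argue directly at the variational level, mirroring the proof of Lemma~\ref{lem:ee1}. The key step is to test the first equation of Problem~\ref{prob:instath} with $\bar v_h=u_h(t)\in V_h$ and the second with $\bar q_h=p_h(t)\in Q_h$, which is legitimate since the discrete solution components themselves lie in the test spaces. Adding the two identities, the coupling terms $-(p_h(t),\dx u_h(t))$ and $(\dx u_h(t),p_h(t))$ cancel exactly, and one obtains
\begin{align*}
\frac{1}{2}\frac{d}{dt}\big(\|u_h(t)\|^2+\|p_h(t)\|^2\big) = -(a\,u_h(t),u_h(t)) \le a_1\,\|u_h(t)\|^2,
\end{align*}
using only $a\in L^\infty(0,1)$ with $\|a\|_{L^\infty}\le a_1$; note that positivity of $a$ is not needed here, merely its boundedness. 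Bounding the right-hand side by $a_1(\|u_h(t)\|^2+\|p_h(t)\|^2)$ and applying Grönwall's inequality yields
\begin{align*}
\|u_h(t)\|^2+\|p_h(t)\|^2 \le e^{2 a_1 t}\big(\|u_h(0)\|^2+\|p_h(0)\|^2\big), \qquad 0\le t\le T,
\end{align*}
so the constant $C=e^{2a_1 T}$ depends only on the bound for $a$ and the time horizon $T$, as claimed.

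Finally I would connect the initial energy on the discrete side back to the continuous data: since $\pi_h$ and $\rho_h$ are $L^2$-orthogonal projections they are norm-nonincreasing, so $\|u_h(0)\|\le\|u_0\|$ and $\|p_h(0)\|\le\|p_0\|$, giving the stated estimate with the same constant $C$. The main obstacle, such as it is, lies in the well-posedness step rather than the stability step: one must verify that the discrete system genuinely reduces to an ODE in normal form, i.e.\ that the mass matrices $M_u$ and $M_p$ are invertible. This follows because the basis functions are linearly independent, so $M_u$ and $M_p$ are Gram matrices of linearly independent families and hence positive definite; crucially, no inf-sup or compatibility condition between $V_h$ and $Q_h$ is required for mere well-posedness and stability of the semi-discrete evolution, in contrast to the stationary Lemma~\ref{lem:stath}. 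Once the normal form is established, the energy estimate is a short and robust computation that parallels the continuous case.
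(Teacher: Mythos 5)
Your proposal is correct and follows essentially the same route as the paper: reduction to a linear ODE system via bases and Picard--Lindel\"of for existence and uniqueness, followed by the energy argument obtained from testing with $\bar v_h = u_h(t)$, $\bar q_h = p_h(t)$ (which is exactly the computation behind the paper's Lemma~\ref{lem:ee1h}) combined with Gr\"onwall's inequality. You merely make explicit some details the paper leaves implicit --- the invertibility of the mass matrices, the norm-nonincreasing property of the $L^2$-projections for the initial data, and the fact that Gr\"onwall is needed since $a$ is only assumed bounded rather than nonnegative --- all of which are accurate.
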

\begin{proof}
Existence and uniqueness follow from the Picard-Lindelöf theorem, 
and the a-priori estimate follows from the energy identities given below. 
\end{proof}

\subsection{Discrete energy estimates}
We now conduct a refined stability analysis of the Galerkin approximations.
Let $(u^h,p^h)$ denote a solution of Problem~\ref{prob:instath}. 
Proceeding in a similar manner as on the continuous level, we define the semi-discrete generalized energies 
\begin{align*}
\Ek_h(t) = \frac{1}{2} \big( \|\dtk u_h(t)\|^2 + \|\dtk p_h(t)\|^2\big), \qquad k \ge 0.
\end{align*}
The following energy identities then follow almost directly from the special form of the variational principle underlying and the Galerkin approximation.
\begin{lemma} \label{lem:ee1h}
Let $a \in L^\infty(0,1)$ and $(u_h,p_h)$ be a solution of Problem~\ref{prob:instath}.
Then 
\begin{align*}
\Ek_h(t) = \Ek_h(s) - \int_s^t \int_0^1 a(x) |\dtk u_h(x,r)|^2 dx \; dr
\end{align*}
If $a \ge 0$, then the semi-discrete energies are monotonically decreasing.
\end{lemma}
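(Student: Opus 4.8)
The plan is to mimic the continuous energy identity proof (Lemma~\ref{lem:ee1}) at the discrete level, exploiting the fact that the Galerkin solution lives in the same spaces as the test functions. First I would treat the case $k=0$ and differentiate $\E^0_h(t)$ in time, which is legitimate since $(u_h,p_h) \in H^1(0,T;V_h \times Q_h)$ guarantees differentiability of the finite-dimensional coefficient functions. This gives
\begin{align*}
\frac{d}{dt} \E^0_h(t) = (\dt u_h(t), u_h(t)) + (\dt p_h(t), p_h(t)).
\end{align*}
The crucial observation is that $u_h(t) \in V_h$ and $p_h(t) \in Q_h$ are themselves admissible test functions, so I may set $\bar v_h = u_h(t)$ in the first equation and $\bar q_h = p_h(t)$ in the second equation of Problem~\ref{prob:instath}. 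Substituting these yields
\begin{align*}
(\dt u_h(t), u_h(t)) &= (p_h(t), \dx u_h(t)) - (a u_h(t), u_h(t)), \\
(\dt p_h(t), p_h(t)) &= -(\dx u_h(t), p_h(t)).
\end{align*}
Adding these, the two mixed terms $(p_h, \dx u_h)$ and $(\dx u_h, p_h)$ cancel exactly, leaving $\frac{d}{dt}\E^0_h(t) = -(a u_h(t), u_h(t)) = -\int_0^1 a(x) |u_h(x,t)|^2 dx$. Integrating over $[s,t]$ gives the claimed identity for $k=0$.

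The key point worth emphasizing is that, unlike on the continuous level where the cancellation of $(\dx u,p)$ and $(\dx p,u)$ relied on integration-by-parts together with the boundary condition \eqref{eq:instat3}, here no integration-by-parts is performed at all. The mixed variational formulation has the term $\dx$ acting only on $u_h$ (an $H^1$ function tested against $Q_h$) and the transposed term $-(p_h,\dx\bar v_h)$ tested against $V_h$; choosing the solution components as test functions makes the two bilinear contributions algebraically opposite, so the cancellation is structural rather than requiring any boundary integral. This is precisely why the energy identity transfers cleanly to the Galerkin scheme regardless of whether $V_h$ or $Q_h$ encode the boundary condition, and it is the reason the compatibility condition $Q_h \subset \dx V_h$ from Lemma~\ref{lem:stath} is not needed for this stability estimate.

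For $k \ge 1$ I would argue exactly as in Lemma~\ref{lem:ee1}: differentiating the discrete variational equations $k$ times in time (again permissible for the finite-dimensional ODE system, under the assumption that $\E^k_h(0)$ is finite, which here is automatic since all derivatives of a solution of a linear ODE with smooth-in-time structure exist) shows that $(\dtk u_h, \dtk p_h)$ solves the same discrete variational problem with initial data $(\dtk u_h(0), \dtk p_h(0))$. Since $a$ is time-independent, differentiating the equations in Problem~\ref{prob:instath} commutes with the spatial operators, so $(\dtk u_h, \dtk p_h)$ is again a solution of the homogeneous semi-discrete problem. Applying the $k=0$ result to this pair immediately yields the identity for general $k$. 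Finally, if $a \ge 0$, the integrand $a(x)|\dtk u_h(x,r)|^2$ is nonnegative, so the time-integral term is nonnegative and $\E^k_h(t) \le \E^k_h(s)$ for $s \le t$, proving monotone decay.

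I do not anticipate a genuine obstacle in this proof; it is essentially a transcription of the continuous argument. The only point requiring mild care is justifying the differentiation in time and the statement that $(\dtk u_h, \dtk p_h)$ solves the same system — this rests on the linearity and time-autonomy of the semi-discrete problem and on the equivalence of Problem~\ref{prob:instath} with a linear ODE noted before Lemma~\ref{lem:instath}. One should also note that, in contrast to the continuous case treated in Lemma~\ref{lem:ee1} where the $k=0$ identity held for mild solutions by a density argument, no such approximation is needed here because the semi-discrete solution is already smooth enough in time for all the manipulations to be valid directly.
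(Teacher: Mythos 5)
Your proposal is correct and follows essentially the same route as the paper's proof: testing Problem~\ref{prob:instath} with $\bar v_h = u_h(t)$, $\bar q_h = p_h(t)$ for $k=0$, integrating in time, and reducing $k\ge 1$ to $k=0$ via the observation that $(\dtk u_h,\dtk p_h)$ again solves the discrete problem, with smoothness in time granted by the finite-dimensional ODE structure. Your added remark that the cancellation is structural (no integration by parts or compatibility condition $Q_h \subset \dx V_h$ needed) is accurate and consistent with the paper, which only invokes those conditions later for the exponential decay.
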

\begin{proof}
Since the right hand side is zero, the discrete solution $(u_h,p_h)$ is always infinitely differentiable with respect to time. For $k=0$ we then obtain 
\begin{align*}
\frac{d}{dt} \E^0_h(t) 
&= (\dt u_h(t), u_h(t)) + (\dt p_h(t), p_h(t)) 
 = - (a u_h(t), u_h(t)),
\end{align*}
which follows directly by testing the variational principle with $\bar v_h = u_h(t)$ and $\bar
q_h = p_h(t)$.
The result for $k=0$ then follows by integration over time. 
The case $k \ge 1$ can be reduced to that for $k=0$ 
by observing that $(\dtk u_h,\dtk p_h)$ also solves the discrete problem.
\end{proof}

Under a mild compatibility condition for the approximation spaces $V_h$ and $Q_h$, 
we can also prove exponential decay estimates for the discrete energies. 
\begin{theorem}[Semi-discrete exponential stability] \label{thm:ee2h} $ $\\
Let $0<a_0 \le a(x) \le a_1$ and assume that $Q_h = \dx V_h$ and $1 \in V_h$.\\
Then any solution $(u_h,p_h)$ of Problem~\ref{prob:instath} satisfies
\begin{align*}
\Ek_h(t) \le 3 e^{-\alpha (t-s)} \Ek_h(s)
\end{align*}
with decay rate $\alpha>0$ that can be chosen as on the continuous level.
\end{theorem}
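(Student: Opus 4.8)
The plan is to mirror exactly the continuous-level argument sketched in Theorem~\ref{thm:ee2}, since the hypotheses $Q_h = \dx V_h$ and $1 \in V_h$ are precisely designed to make the discrete calculation reproduce the continuous one. I would begin with the case $k=1$ and define the discrete modified energy
\begin{align*}
\E^1_{h,\eps}(t) = \E^1_h(t) + \eps (\dt u_h(t), u_h(t)),
\end{align*}
in direct analogy with the continuous construction. The equivalence $\frac{1}{2}\E^1_h(t) \le \E^1_{h,\eps}(t) \le \frac{3}{2}\E^1_h(t)$ for small $\eps$ follows from Cauchy--Schwarz and Young's inequality exactly as in the continuous Lemma~\ref{lem:a3}, with no use of the structural hypotheses at all, since it only involves the $L^2$ inner product.

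The substance is in differentiating $\E^1_{h,\eps}(t)$ and bounding it by $-\frac{2}{3}\eps\,\E^1_{h,\eps}(t)$. Here I would differentiate both parts of the energy using the semi-discrete variational principle. The term $\frac{d}{dt}\E^1_h(t) = -(a\,\dt u_h, \dt u_h)$ comes directly from Lemma~\ref{lem:ee1h} applied to $(\dt u_h, \dt p_h)$. For the cross term I would compute $\frac{d}{dt}(\dt u_h, u_h) = \|\dt u_h\|^2 + (\dtt u_h, u_h)$ and insert the differentiated first equation, tested against $u_h$. The crucial point is that the integration-by-parts step $(\dx p_h, u_h) = -(p_h, \dx u_h)$ used on the continuous level is replaced by the Galerkin orthogonality available under our hypotheses: testing the first equation with $\bar v_h = u_h \in V_h$ requires $\dx u_h$ to be admissible as a test function $\bar q_h \in Q_h$ for the second equation, which holds precisely because $Q_h = \dx V_h$. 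The condition $1 \in V_h$ guarantees that constants are available in $V_h$, which is what is needed to control the zeroth-order terms and to ensure the discrete analogue of the Poincaré-type estimate used in the continuous proof still closes.

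Once the differential inequality $\frac{d}{dt}\E^1_{h,\eps}(t) \le -\frac{2}{3}\eps\,\E^1_{h,\eps}(t)$ is established for a suitable $\eps$, Gronwall's lemma and the energy equivalence yield the factor $3$ and the same decay rate $\alpha$ as on the continuous level for $k=1$. The case $k=0$ is then handled by the explicit construction referenced in the continuous proof, noting that the discrete solution inherits the same algebraic structure; and the estimate for $k \ge 2$ follows by applying the $k=0$ result to $(\dtk u_h, \dtk p_h)$, which again solves the discrete problem by Lemma~\ref{lem:ee1h}.

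The main obstacle I anticipate is verifying that the compatibility conditions $Q_h = \dx V_h$ and $1 \in V_h$ suffice to reproduce every integration-by-parts and test-function substitution used in the continuous estimate, \emph{without} any additional regularity: on the continuous level one freely integrates by parts and uses the boundary condition $p=0$, whereas discretely one may only test with functions lying in $V_h$ or $Q_h$. I would therefore track each step of the continuous Lemmas~\ref{lem:a3}--\ref{lem:a4} and check that the required test function indeed belongs to the appropriate discrete space; the identity $Q_h = \dx V_h$ is the key that makes $\dx u_h$ an admissible pressure test function and $\dx V_h \subseteq Q_h$ makes $\dx \bar v_h$ pair correctly with $p_h$, while $1 \in V_h$ supplies the constants. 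Confirming that no step secretly requires a function outside these spaces is the delicate part of the argument.
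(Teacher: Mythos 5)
Your overall route is exactly the paper's: prove $k=1$ via the modified energy $\E^1_{h,\eps}(t)=\E^1_h(t)+\eps(\dt u_h(t),u_h(t))$, then get $k=0$ by the antiderivative/stationary-solution construction and $k\ge 2$ by differentiating in time. However, there is a genuine error at the one point the paper is most explicit about. You claim that the equivalence $\tfrac12 \E^1_h(t)\le \E^1_{h,\eps}(t)\le\tfrac32 \E^1_h(t)$ ``follows from Cauchy--Schwarz and Young's inequality \dots with no use of the structural hypotheses at all, since it only involves the $L^2$ inner product.'' This is false. Cauchy--Schwarz gives $|(\dt u_h,u_h)|\le\|\dt u_h\|\,\|u_h\|$, but $\E^1_h$ controls only $\|\dt u_h\|$ and $\|\dt p_h\|$ --- it contains no control of $\|u_h\|$ whatsoever, so the cross term cannot be absorbed by Cauchy--Schwarz and Young alone. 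The continuous Lemma~\ref{lem:a3} closes this estimate by invoking Lemma~\ref{lem:a2}, i.e.\ the bound $\|u\|\le\frac{a_1}{a_0}\|\dt p\|+\frac{1}{a_0}\|\dt u\|$, and Lemma~\ref{lem:a2} is precisely where the equation structure and boundary condition enter. On the discrete level its analogue is where both compatibility conditions are consumed: testing the second equation of Problem~\ref{prob:instath} with $\bar q_h=\dx u_h$ (admissible only because $\dx V_h\subseteq Q_h$) yields $\|\dx u_h\|\le\|\dt p_h\|$, and testing the first equation with $\bar v_h=1$ (admissible only because $1\in V_h$) yields $|(a u_h,1)|\le\|\dt u_h\|$; these two facts feed the generalized Poincar\'e inequality of Lemma~\ref{lem:a1}. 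The paper's proof of Theorem~\ref{thm:ee2h} states this explicitly: the conditions are required for the discrete analogue of Lemma~\ref{lem:a2}, which is then used in \emph{both} Lemma~\ref{lem:a3} and Lemma~\ref{lem:a4}. Without the discrete Lemma~\ref{lem:a2}, your modified energy cannot even be compared to $\E^1_h$, and the final Gronwall step concludes nothing about $\E^1_h$.

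A secondary omission: for the reduction of $k=0$ to $k=1$ you appeal to the continuous construction and assert the discrete solution ``inherits the same algebraic structure,'' but that construction requires solving the \emph{discrete} stationary problem (Problem~\ref{prob:stath}) with data $(u_h(0),p_h(0))$, whose well-posedness (Lemma~\ref{lem:stath}) rests on the inf-sup condition, i.e.\ on the reverse inclusion $Q_h\subseteq\dx V_h$. So the full equality $Q_h=\dx V_h$ is genuinely used: one inclusion drives the discrete Poincar\'e-type estimate, the other the solvability of the stationary problem in the $k=0$ step. Your description of where the hypotheses enter the decay estimate itself (Lemma~\ref{lem:a4}) is essentially correct, but the engine there is again the discrete Lemma~\ref{lem:a2}, not a Galerkin-orthogonality substitute for integration by parts: the identity $(\dt p_h,\dx u_h)=-\|\dt p_h\|^2$ actually follows by testing with $\bar q_h=\dt p_h\in Q_h$ and needs no hypothesis at all, whereas the bound on $\|u_h\|$ is what cannot be had for free.
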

\begin{proof}
The proof follows almost verbatim as that of Theorem~\ref{thm:ee2} and is given in the appendix.
Note that the conditions $1 \in V_h$ and $Q_h = \dx V_h$ are required for the proof 
of the discrete analogue of Lemma~\ref{lem:a2}, which is then used in Lemma~\ref{lem:a3} 
and \ref{lem:a4} again. The proof of the exponential stability thus strongly relies on these
compatibility conditions.
\end{proof}

\subsection{An inhomogeneous problem}

The Galerkin semi-discretization can be generalized to problems with non-trivial boundary conditions or right hand sides. 
A problem of the following form will arise in the error analysis later on.
\begin{problem} \label{prob:instath2}
Find $(w_h,r_h) \in H^1(0,T;V_h \times Q_h)$ with $w_h(0) = 0$ and $r_h(0)=0$
such that for a.e. $t \in (0,T)$ there holds
\begin{align*}
 (\dt w_h(t), \bar v_h) - (r_h(t), \dx\bar v_h) + (a w_h(t), \bar v_h) &= (f(t),\bar v_h) &&
 \text{for all } \bar v_h \in V_h \\
(\dt r_h(t), \bar q_h) + (\dx w_h(t),\bar q_h) &= (g(t),\bar q_h) && \text{for all } \bar q_h
\in Q_h. 
\end{align*}
\end{problem}
Existence of a unique solution follows again from the Picard-Lindelöf theorem,
and the discrete energy estimates for the homogeneous problem yield uniform bounds.
\begin{lemma} 
\label{lem:instath2}
Let $0<a_0 \le a(x) \le a_1$ and assume that $Q_h = \dx V_h$ and $1 \in V_h$.\\
Then for any $f,g \in L^2(0,T;L^2(0,1))$,  Problem~\ref{prob:instath2} has a unique solution
$(w_h,r_h)$ and
\begin{align*}
\|w_h(t)\|^2 + \|r_h(t)\|^2 \le 3 \int_0^t e^{-\alpha(t-s)} \big( \|f(s)\|^2 + \|g(s)\|^2\big)
ds.
\end{align*}
\end{lemma}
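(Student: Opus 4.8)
The plan is to use the Duhamel-type superposition principle that arises naturally from the linearity of Problem~\ref{prob:instath2}. The key observation is that the inhomogeneous problem with zero initial data can be decomposed into a continuous family of homogeneous problems, each started at a time $s \in (0,t)$ by the source term acting as an impulsive initial condition. Concretely, for each $s$, let $(u_h^s, p_h^s)$ denote the solution of the \emph{homogeneous} Problem~\ref{prob:instath} on the time interval $(s,T)$ with initial data at time $s$ chosen so that it represents the instantaneous forcing; then one expects a representation of the form
\begin{align*}
(w_h(t), r_h(t)) = \int_0^t (u_h^s(t), p_h^s(t)) \, ds.
\end{align*}
Since the semi-discrete system is equivalent to a linear ODE on the finite-dimensional space $V_h \times Q_h$ (as established in Lemma~\ref{lem:instath}), this variation-of-constants formula is rigorous and standard.

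First I would make the Duhamel representation precise. Writing the semi-discrete system abstractly as $\dt y_h = A_h y_h + F_h(t)$ on $V_h \times Q_h$, where $y_h = (u_h, p_h)$ and $F_h(t)$ encodes the source $(f(t), g(t))$ through the mass matrices, the solution with $y_h(0) = 0$ is $y_h(t) = \int_0^t e^{A_h(t-s)} F_h(s)\, ds$. The crucial point is that $e^{A_h(t-s)} F_h(s)$ is precisely the solution at time $t$ of the \emph{homogeneous} Problem~\ref{prob:instath} started at time $s$ with initial value $F_h(s)$. The compatibility conditions $Q_h = \dx V_h$ and $1 \in V_h$ are in force, so Theorem~\ref{thm:ee2h} applies to each of these homogeneous solutions with a uniform decay rate $\alpha$.

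Next I would apply the exponential decay estimate to the integrand. For the homogeneous solution started at time $s$, Theorem~\ref{thm:ee2h} with $k=0$ gives $\|e^{A_h(t-s)} F_h(s)\|^2 \le 3\, e^{-\alpha(t-s)} \|F_h(s)\|^2$, where the norm is the energy norm $\|(u,p)\|^2 = \|u\|^2 + \|p\|^2$ and the initial energy at time $s$ is controlled by $\|f(s)\|^2 + \|g(s)\|^2$. Substituting into the Duhamel formula and taking norms yields
\begin{align*}
\|w_h(t)\|^2 + \|r_h(t)\|^2 \le 3 \int_0^t e^{-\alpha(t-s)} \big( \|f(s)\|^2 + \|g(s)\|^2 \big) \, ds,
\end{align*}
which is exactly the claimed bound. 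Existence and uniqueness of the solution follow from the Picard--Lindel\"of theorem as noted before the statement, so only the a-priori estimate needs argument.

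The main obstacle I anticipate is the bookkeeping that connects the abstract propagator estimate to the energy norm with the correct constant. The subtlety lies in verifying that when the source term is realized as an initial datum for the homogeneous flow, its energy is bounded by $\|f(s)\|^2 + \|g(s)\|^2$ \emph{without} a spurious factor from the mass matrices or the $L^2$-projections, so that the constant $3$ from Theorem~\ref{thm:ee2h} carries through unchanged. One must also take care that the decay estimate of the theorem, which is stated for energies at two times $s \le t$, is applied with the running variable $s$ as the \emph{starting} time of each homogeneous trajectory; confusing this with the global initial time would break the argument. Apart from this, a minor technical point is justifying the interchange of the energy estimate with the time integral, which is immediate by the triangle inequality in $L^2(0,1)\times L^2(0,1)$ applied to the integral representation together with the Cauchy--Schwarz inequality, or more simply by estimating the $L^2$-norm of the vector-valued integral directly.
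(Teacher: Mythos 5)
Your proposal follows essentially the same route as the paper's own proof: the paper also writes the solution via the variation-of-constants formula $Y_h(t)=\int_0^t e^{-A(t-s)}F(s)\,ds$, deduces the propagator bound $\|e^{-A(t-s)}\|^2\le 3e^{-\alpha(t-s)}$ from Theorem~\ref{thm:ee2h} (which is exactly where the compatibility conditions $Q_h=\dx V_h$ and $1\in V_h$ enter), and concludes by ``elementary calculations.'' The only caveat, which applies equally to the paper's sketch, is that the final step you call immediate is not purely a triangle-inequality argument: squaring the integral bound requires a Cauchy--Schwarz step that, done naively, alters the constant or the decay rate, so some care (e.g.\ a weighted splitting of the exponential) is needed to recover the stated form.
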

\begin{proof}
By the variation of constants formula, the solution $Y_h=(w_h,r_h)$ of the discrete problem 
can be expressed as $Y_h(t)=\int_0^t e^{-A(t-s)} F(s) ds $ with $A$ and $F$ denoting the 
operator and right hand side governing the discrete evolution. 
From the discrete energy estimates of Theorem~\ref{thm:ee2h}, we deduce that 
$\|e^{-A(t-s)}\|^2 \le 3 e^{-\alpha(t-s)}$, from which the assertion follows by elementary calculations.
\end{proof}

\subsection{Discretization error estimates}

A-priori error estimates for the Galerkin semi-discretization can be obtained with 
standard arguments \cite{Dupont73,DouglasDupontWheeler78} 
and some modifications in order to advantage of the exponential stability. 
%
\begin{theorem}[Error estimates for the semi-discretization] \label{thm:eegalerkin} $ $\\
Let $0<a_0 \le a(x) \le a_1$ and assume that $Q_h = \dx V_h$ and $1 \in V_h$.
Then 
\begin{align*}
&\|u(t) - u_h(t)\|^2 + \|p(t) - p_h(t)\|^2 
  \le \|u(t) - \pi_h u(t)\|^2 + \|p(t) - \rho_h p(t)\|^2  \\
&  \qquad  \qquad \qquad + 3 \int_0^t e^{-\alpha(t-s)} \big( a_1^2 \|u(s) - \pi_h u(s)\|^2 + \|\dx (u(s) - \pi_h u(s)) \|^2 \big) ds. 
\end{align*} 
\end{theorem}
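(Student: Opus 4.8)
The plan is to follow the classical projection-based argument of \cite{Dupont73,DouglasDupontWheeler78}, but to replace the usual Gronwall step by the discrete stability estimate of Lemma~\ref{lem:instath2}, so that the error bound automatically inherits the exponential factor $e^{-\alpha(t-s)}$. First I would split the error into a projection part and a discrete part by writing $u-u_h=\eta_u+\theta_u$ and $p-p_h=\eta_p+\theta_p$ with $\eta_u=u-\pi_h u$, $\theta_u=\pi_h u-u_h\in V_h$, $\eta_p=p-\rho_h p$, and $\theta_p=\rho_h p-p_h\in Q_h$. Since $\pi_h$ and $\rho_h$ are the $L^2$-projections and commute with $\dt$, the components $\eta_u$ and $\dt\eta_u$ are $L^2$-orthogonal to $V_h$, and $\eta_p$ and $\dt\eta_p$ are orthogonal to $Q_h$.

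Next I would derive the error equations. Subtracting the semi-discrete Problem~\ref{prob:instath} from the weak form of Problem~\ref{prob:instat}, tested with $\bar v_h\in V_h$ and $\bar q_h\in Q_h$, yields the Galerkin orthogonality for the error. Inserting the splitting and using the orthogonality relations, the terms $(\dt\eta_u,\bar v_h)$ and $(\dt\eta_p,\bar q_h)$ drop out. The crucial observation is that the term $(\eta_p,\dx\bar v_h)$ also vanishes: by the compatibility assumption $Q_h=\dx V_h$ we have $\dx\bar v_h\in Q_h$, so orthogonality of $\eta_p$ to $Q_h$ applies. What survives are the two data terms $-(a\eta_u,\bar v_h)$ and $-(\dx\eta_u,\bar q_h)$; note that $(a\eta_u,\bar v_h)$ does \emph{not} vanish, since $a\bar v_h\notin V_h$ in general. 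Consequently $(\theta_u,\theta_p)$ solves exactly the inhomogeneous discrete Problem~\ref{prob:instath2} with right-hand sides $f=-a\eta_u$ and $g=-\dx\eta_u$ and with vanishing initial values, because $\theta_u(0)=\pi_h u_0-u_h(0)=0$ and $\theta_p(0)=\rho_h p_0-p_h(0)=0$ by the chosen discrete initial data.

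I would then apply Lemma~\ref{lem:instath2} directly to $(\theta_u,\theta_p)$ and estimate $\|a\eta_u(s)\|^2\le a_1^2\|\eta_u(s)\|^2$ using $a\le a_1$, which gives
\[
\|\theta_u(t)\|^2+\|\theta_p(t)\|^2\le 3\int_0^t e^{-\alpha(t-s)}\big(a_1^2\|\eta_u(s)\|^2+\|\dx\eta_u(s)\|^2\big)\,ds.
\]
Finally I would assemble the full error by the Pythagorean identity: since $\theta_u\in V_h$ is orthogonal to $\eta_u$ and $\theta_p\in Q_h$ is orthogonal to $\eta_p$, we have $\|u-u_h\|^2+\|p-p_h\|^2=\|\eta_u\|^2+\|\eta_p\|^2+\|\theta_u\|^2+\|\theta_p\|^2$, and combining with the previous bound yields the claimed estimate. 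Here the compatibility conditions $Q_h=\dx V_h$ and $1\in V_h$ enter through Lemma~\ref{lem:instath2} and through the vanishing of $(\eta_p,\dx\bar v_h)$.

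I expect the main obstacle to be the careful bookkeeping of which terms survive in the error equations, in particular verifying that $(\eta_p,\dx\bar v_h)$ vanishes precisely because of the compatibility condition $Q_h=\dx V_h$, and that no additional regularity of $p$ nor any projection-error contribution from $p$ is forced into the right-hand side. Once $(\theta_u,\theta_p)$ has been identified as a solution of Problem~\ref{prob:instath2}, the exponential factor comes for free from Lemma~\ref{lem:instath2} and the remaining steps are routine.
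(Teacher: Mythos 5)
Your proof is correct and follows essentially the same route as the paper: the same splitting into projection and discrete parts, the identification of $(\theta_u,\theta_p)$ as a solution of Problem~\ref{prob:instath2} with data $f=a(\pi_h u-u)$, $g=\dx(\pi_h u-u)$ and zero initial values, and the application of Lemma~\ref{lem:instath2}. Your explicit use of the Pythagorean identity (orthogonality of $\eta_u$ to $V_h$ and of $\eta_p$ to $Q_h$) to assemble the final bound is in fact slightly cleaner than the paper's triangle-inequality splitting, since it is precisely what yields the stated estimate with unit constants on the squared projection errors.
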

\begin{proof}
Following \cite{Dupont73}, we use a splitting of the error of the form
\begin{align*}
&\|u(t)-u_h(t)\| + \|p(t)-p_h(t)\| \\ 
& \qquad \le \big(\|u(t)-\pi_h u(t)\| + \|p(t)-\rho_h p(t)\|\big) + \big( \|\pi_h u(t) - u_h(t)\| + \|\rho_h p(t) - p_h(t)\|\big)
\end{align*}
into an approximation part and a discrete part. 
The first term already appears in the final estimate. 
To bound the second term, 
let us set $w_h=\pi_h u - u_h$ and $r_h = \rho_h p - p_h$. 
Then by definition of the discrete solution, we have  $w_h(0)=0$ and $r_h(0)=0$. 
Using the variational characterization of the discrete and the continuous solution, we further get
\begin{align*}
(\dt w_h(t)&, \bar v_h) - (r_h(t), \dx \bar v_h) + (a w_h(t), \bar v_h ) \\
&=(\dt \pi_h u(t) - \dt u_h(t), \bar v_h) - (\rho_h p(t) - p_h(t), \dx \bar v_h) + (a \pi_h u(t) - a u_h(t), \bar v_h ) \\
&=(\dt \pi_h u(t) - \dt u(t), \bar v_h) - (\rho_h p(t) - p(t), \dx \bar v_h) + (a \pi_h u(t) - a u(t), \bar v_h ) \\
&=( a \pi_h u(t) - a u_h(t), \bar v_h ).
\end{align*}
Here we used the properties of the projections $\pi_h$ and $\rho_h$, and the condition $\dx V_h  \subset Q_h$ in the last step.
In a similar manner, we obtain
\begin{align*}
(\dt r_h(t)&, \bar q_h) + (\dx w_h(t), \bar q_h)  \\
&=(\dt \rho_h p(t) - \dt p_h(t), \bar q_h) + (\dx \pi_h u(t) - \dx u_h(t), \bar q_h)  \\
&=(\dt \rho_h p(t)  - \dt p(t), \bar q_h) + (\dx \pi_h u(t) - \dx u(t), \bar q_h) \\
&=(\dx \pi_h u(t) - \dx u(t), \bar q_h),
\end{align*}
where the last step uses the properties of the projection $\rho_h$.
This shows that $(w_h,r_h)$ solves Problem~\ref{prob:instath2} with $f= a (\pi_h u - u)$ and $g=\dx \pi_h u(t) - \dx u(t)$. 
The estimate for the discrete part in the error splitting now follows from the bounds of Lemma~\ref{lem:instath2}.
\end{proof}

\begin{remark} \label{rem:regularity}
As typical for hyperbolic problems, the error estimate is slightly sub-optimal concerning the regularity requirements. 
Note, however, that the right hand side is bounded whenever $(u,p)$ is a weak solution and as long as $\|\dx \pi_h u\| \le C \|u\|_1$, i.e., when the $L^2$-projection $\pi_h$ is stable on $H^1(0,1)$. 
Similar error estimates can be obtained, if the elliptic projection is used in the error splitting \cite{Dupont73,DouglasDupontWheeler78}. 
The terms under the integral would then read $\big(\|\dt u(s) - \tilde \pi_h \dt u(s)\|^2 + \|\dt p(s) - \tilde \rho_h \dt p(s)\|^2\big)$. Since the elliptic projection requires some spatial regularity, the right hand side is finite only, if $\dt u(s)$ and $\dt p(s)$ have some spatial regularity,
which is not the case for all classical solutions. 
The use of the $L^2$-projection here thus yields the most general error estimate here.
\end{remark}


\section{A mixed finite element method} \label{sec:mixed}

A particular example of approximation spaces that satisfy the assumptions of the previous section are given by mixed finite elements. Let $T_h$ be a partition of the domain $(0,1)$
in subintervals of length $h$. We denote by $P_k(T_h)$ the space of piecewise polynomials of order $k$. 
One can now easily define pairs of compatible spaces of arbitrary order
with good approximation and stability properties.
\begin{lemma} \label{lem:approx}
Let $V_h = P_{k+1}(T_h) \cap H^1(0,1)$ and $Q_h = P_{k}(T_h)$ with $k \ge 0$. 
Then 
\begin{itemize}
 \item[(i)]    $Q_h = \dx V_h$ and $1 \in V_h$.
 \item[(ii)]   $\|\dx \pi_h u\| \le c_1 \|\dx u\|$ for all $u \in H^1(0,1)$.
\end{itemize}
Moreover, the following approximation properties hold
\begin{itemize}
 \item[(iii)] $\|p - \rho_h p\| \le c_3 h^{\min\{k+1,r\}} \|\dx^r p\|$ for any $p \in H^r(T_h) $.
 \item[(iv)]  $\|u - \pi_h u\| \le c_2 h^{\min\{k+2,r\}} \|\dx^r u\|$ for any $u \in H^r(T_h) \cap H^1(0,1)$.
 \item[(v)]   $\|\dx(u - \pi_h u)\| \le c_2 h^{\min\{k+1,r-1\}} \|\dx^r u\|$ for any $u \in H^r(T_h) \cap H^1(0,1)$.
\end{itemize}
The constants $c_i$ in the above estimates only depend on the polynomial degree $k$.
\end{lemma}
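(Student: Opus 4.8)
My plan is to handle the five claims in order of increasing difficulty, treating (i), (iii) and (iv) as essentially routine and reserving the real work for the $H^1$-type estimates (ii) and (v).

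For (i) I would argue purely algebraically. Differentiating a continuous piecewise polynomial of degree $k+1$ yields a (possibly discontinuous) piecewise polynomial of degree $k$, so $\dx V_h \subset Q_h$; conversely, for any $\bar q_h \in Q_h$ the antiderivative $v_h(x) = \int_0^x \bar q_h(s)\,ds$ is continuous and piecewise of degree $k+1$, hence lies in $V_h$ with $\dx v_h = \bar q_h$, giving $Q_h \subset \dx V_h$. The inclusion $1 \in V_h$ is immediate since constants belong to $P_{k+1}(T_h) \cap H^1(0,1)$ for $k \ge 0$.

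For the approximation estimates (iii) and (iv) I would exploit that $\rho_h$ and $\pi_h$ are $L^2$-orthogonal projections, hence best approximations in $L^2$. Since $Q_h$ consists of fully discontinuous polynomials, $\rho_h$ decouples and acts element-wise as the local $L^2$-projection onto $P_k$, so (iii) follows from the Bramble--Hilbert lemma applied on each element and summed over the partition. For (iv) I would compare $\pi_h u$ with a suitable interpolant $I_h u \in V_h$, use $\|u - \pi_h u\| \le \|u - I_h u\|$, and invoke the standard interpolation error bound of order $h^{\min\{k+2,r\}}$ for continuous piecewise polynomials of degree $k+1$.

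The genuinely nontrivial parts are (ii) and (v), both concerning the derivative of the $L^2$-projection, and for both I expect the inverse inequality to be the essential tool; here the assumption of a uniform mesh of size $h$ is exactly what guarantees the estimate $\|\dx v_h\| \le C h^{-1}\|v_h\|$ on $V_h$. For (ii) I would split $\dx \pi_h u = \dx(\pi_h u - I_h u) + \dx I_h u$, bound the second term by the $H^1$-stability of the quasi-interpolant, and control the first via the inverse inequality together with $\|\pi_h u - I_h u\| \le \|\pi_h u - u\| + \|u - I_h u\| \le 2\|u - I_h u\| \le C h\|\dx u\|$. For (v) the same idea gives $\|\dx(u - \pi_h u)\| \le \|\dx(u - I_h u)\| + \|\dx(I_h u - \pi_h u)\|$, where the first term is handled by the interpolation error estimate for the derivative and the second by the inverse inequality applied to $I_h u - \pi_h u \in V_h$, using (iv) to bound $\|u - \pi_h u\|$; since $h^{-1}\,h^{\min\{k+2,r\}} = h^{\min\{k+1,r-1\}}$, the claimed order results. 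The main obstacle is therefore to set up (ii) correctly: one must use a quasi-interpolant of Scott--Zhang or Clément type rather than nodal interpolation, since the latter is not $H^1$-stable for a mere $H^1$ function, whereas the former supplies simultaneously the $H^1$-stability needed in (ii) and the approximation property needed to close the inverse-inequality argument. I would also remark that the commuting relation $\dx \pi_h u = \rho_h \dx u$, which would make (v) an immediate consequence of (iii), fails for $L^2$-projections, so the inverse-inequality route appears unavoidable.
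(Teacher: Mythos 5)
Your proposal is correct, but it cannot be compared step-by-step with the paper's proof for a simple reason: the paper does not actually prove (ii)--(v). Its entire argument is ``the first assertion follows directly from the construction, and the stability estimate (ii) as well as the approximation error estimates are well known; see e.g.~Schwab,'' so your write-up supplies precisely the standard arguments hiding behind that citation. Your treatment of (i) coincides with the paper's one-line claim (differentiate for $\dx V_h \subset Q_h$, antidifferentiate for the reverse inclusion), and your route for (ii)--(v) -- best approximation plus elementwise Bramble--Hilbert for the $L^2$ estimates, then interpolant-plus-inverse-inequality for the $H^1$-type bounds, using that the mesh has uniform width $h$ -- is exactly how these facts are established in the literature the paper points to. Two minor remarks. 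First, your caution that one needs a Scott--Zhang or Cl\'ement quasi-interpolant is unnecessary in this one-dimensional setting: $H^1(0,1)$ embeds continuously into $C[0,1]$, so nodal interpolation is well defined and $H^1$-stable here, and the whole argument goes through with it; your choice is of course also fine and would be the right one in higher dimensions. Second, your closing observation that the commuting relation $\dx \pi_h u = \rho_h \dx u$ fails for the $L^2$-projections (it holds only for the canonical mixed-method interpolation operators) is accurate and explains why the inverse-inequality detour in (v) is genuinely needed rather than an artifact of the proof.
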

\begin{proof}
The first assertion follows directly from the construction,
and the stability estimate (ii) as well as the approximation error estimates are well known; see e.g. \cite{Schwab98}.
\end{proof}

As a direct consequence of Theorem~\ref{thm:eegalerkin} and Lemma~\ref{lem:approx}, we obtain 
\begin{theorem}[Mixed finite element approximation] \label{thm:eemixed} 
Let $V_h = P_{k+1}(T_h) \cap H^1(0,1)$ and $Q_h=P_k(T_h)$.
Moreover, assume that $0<a_0 \le a(x) \le a_1$. Then 
\begin{align*}
&\|u(t) - u_h(t)\|^2 + \|p(t) - p_h(t)\|^2  \\
&\le C h^{2 \min\{r,k+1\}} \Big( \|\dx^r u(t)\|^2 + \|\dx^r p(t)\|^2 + \int_0^t e^{-\alpha (t-s)} \big\{ \|\dx^{r} u(s)\|^2 + \|\dx^{r+1} u(s)\|^2 \big\} dx \Big), 
\end{align*} 
with constant $C$ depending only on the polynomial degree of approximation. 
\end{theorem}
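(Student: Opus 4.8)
The plan is to treat the statement as an immediate corollary, obtained by inserting the approximation estimates of Lemma~\ref{lem:approx} into the abstract error bound of Theorem~\ref{thm:eegalerkin} and collecting all powers of $h$ into a single exponent.

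First I would check applicability: for $V_h = P_{k+1}(T_h) \cap H^1(0,1)$ and $Q_h = P_k(T_h)$ the compatibility conditions $Q_h = \dx V_h$ and $1 \in V_h$ hold by part~(i) of Lemma~\ref{lem:approx}, so Theorem~\ref{thm:eegalerkin} applies and provides a bound on $\|u(t)-u_h(t)\|^2 + \|p(t)-p_h(t)\|^2$ in terms of the projection errors $\|u-\pi_h u\|$, $\|p - \rho_h p\|$ and $\|\dx(u-\pi_h u)\|$, the latter two appearing under the time integral together with the factor $a_1^2$.

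Next I would estimate each of these quantities. The two $L^2$-projection errors are controlled by parts~(iv) and (iii), yielding factors $h^{2\min\{k+2,r\}}$ and $h^{2\min\{k+1,r\}}$; since $h \le 1$ the former may be lowered to $h^{2\min\{k+1,r\}}$, so both carry the common power $h^{2\min\{r,k+1\}}$. The gradient term is the only delicate point: applying (v) with regularity index $r$ gives the exponent $\min\{k+1,r-1\}$, which is one order too small. The remedy is to apply (v) with the regularity index raised to $r+1$, giving $\|\dx(u-\pi_h u)\| \le c_2 h^{\min\{k+1,r\}} \|\dx^{r+1} u\|$; this restores the uniform exponent at the cost of one extra spatial derivative, which is exactly why the $\|\dx^{r+1}u\|$-term appears in the statement.

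Collecting the four estimates, factoring out $h^{2\min\{r,k+1\}}$ and setting $C$ to the maximum of the resulting constants then gives the claim. The main obstacle is essentially exponent bookkeeping: one has to track the regularity index in each approximation estimate and notice that a single power of $h$ is only obtained after the shift $r \mapsto r+1$ in (v), so that the bound implicitly assumes the spatial regularity $u(\cdot) \in H^{r+1}(T_h) \cap H^1(0,1)$ and $p(\cdot) \in H^r(T_h)$. I would also remark that the constant $C$ in fact inherits a dependence on the upper bound $a_1$ through the factor $a_1^2 c_2^2$; this is harmless and can be absorbed or treated as fixed, which is why only the dependence on the polynomial degree is highlighted in the statement.
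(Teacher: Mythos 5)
Your proposal is correct and is exactly the paper's argument: the paper states Theorem~\ref{thm:eemixed} as a direct consequence of Theorem~\ref{thm:eegalerkin} and Lemma~\ref{lem:approx}, which is precisely the substitution and exponent bookkeeping you carry out, including the key point that estimate (v) must be applied with regularity index $r+1$ to produce the uniform power $h^{2\min\{r,k+1\}}$ at the cost of the $\|\dx^{r+1}u\|$ term. Your side remarks (the implicit $H^{r+1}$ regularity requirement on $u$ and the hidden $a_1^2$ in the constant) are accurate observations about what the paper leaves tacit.
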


\begin{remark} \label{rem:regularity2}
From the estimate with $r=0$, one can see that the error is bounded uniformly, 
whenever $(u,p)$ is a classical solution. 
This allows to show that the mixed finite element solution $(u_h,p_h)$ converges to $(u,p)$
in $L^p(0,\infty;L^2 \times L^2)$ as $h \to 0$ for all classical solutions.
%
%
%
The regularity conditions required to obtain quantitative estimate can be verified under the assumption that 
$a$ is sufficiently smooth and that the initial values are smooth and satisfy the usual compatibility conditions \cite{Evans98}. 
From the estimates for the generalized energies stated in Theorem~\ref{thm:ee2}, one can deduce that
\begin{align*}
\|\dx^s p(t) \|^2 + \|\dx^s u(t)\|^2 \le C_r e^{-\alpha t}, \qquad 1 \le s \le r,
\end{align*}
with a constant $C_r$ independent of time. 
Together with Theorem~\ref{thm:eemixed}, we then obtain 
\begin{align*}
\|u(t) - u_h(t)\|^2 + \|p(t) - p_h(t)\|^2 \le C' h^{2\min\{k+1,r\}} (1+t) e^{-\alpha t}. 
\end{align*}
The error thus even converges exponentially to zero with $t \to \infty$. This is not surprising, 
since both, the continuous and the discrete solution, converge to zero exponentially here. 
If time-independent right hand sides or boundary conditions are prescribed, then 
the equilibrium of the system, which is described by the stationary problem, 
is not zero and one would obtain an additional term $C'' h^{2\min\{k+1,r\}}$ in the error estimate, accounting for the approximation error of the stationary problem.
\end{remark}

\section{Time discretization} \label{sec:time}

We now turn to the time discretization of the Galerkin schemes discussed in the previous section. 
Let $\tau>0$ be the time-step and set $t^n = n \tau$ for $n \ge 0$. 
Given a sequence $\{u_h^n\}_{n \ge 0}$, 
we further define the symbols
\begin{align*}
u_h^{n,\theta} &:= \theta u_h^{n} + (1-\theta) u_h^{n-1}, 
\qquad \text{as well as } \\
\bar\partial_\tau^0 u_h^n &:= u_h^n
\qquad \text{and} \qquad \bar 
\partial_\tau^{k+1} u_h^n :=  \frac{\bar \partial_\tau^k u_h^n - \bar \partial_\tau^k u_h^{n-1}}{\tau} \quad \text{for } k \ge 0.
\end{align*}
Therefore, $\bar\partial_\tau^k u_h^n$ corresponds to the $k$th backward difference quotient.
To mimick the notation on the continuous level, we also write $\bar\partial_{\tau\tau} u_h^n$ instead of $\bar\partial_\tau^2 u_h^n$.

\subsection{Fully discrete scheme}

As before, we assume that $V_h \subset H^1(0,1)$ and $Q_h \subset L^2(0,1)$ are some finite dimensional 
subspaces. 
For the time discretization of Problem~\ref{prob:instath}, we now consider the following family of fully discrete approximations. 
\begin{problem}[$\theta$-scheme] \label{prob:theta}$ $\\
Set $u_h^0=\pi_h u_0$ and $p_h^0=\rho_h p_0$. Then, for $n \ge 1$, find 
$(u_h^n,p_h^n)\in V_h\times Q_h$ such that 
\begin{align}
(\bar\partial_\tau u_h^n,\bar v_h)-(p_h^{n,\theta},\dx\bar v_h)+(a u_h^{n,\theta},\bar v_h)&=0 &&\text{for all }\bar v_h\in V_h \label{prob:theta1}\\
(\bar\partial_\tau p_h^n ,\bar q_h)+(\dx u_h^{n,\theta},\bar q_h)&=0 &&\text{for all }\bar q_h\in Q_h.\label{prob:theta2}
\end{align}
\end{problem}
\noindent
Note that the system \eqref{prob:theta1}--\eqref{prob:theta2} can be written equivalently as 
\begin{align*}
\tfrac{1}{\tau} (u_h^{n},\bar v_h)  - \theta (p_h^{n}, \dx \bar v_h) + \theta (a u_h^{n}, \bar v_h)
&= \tfrac{1}{\tau}  (u_h^{n-1},\bar v_h)  +(1-\theta) (p_h^{n-1}, \dx \bar v_h) 
\\ &\,\quad
-(1-\theta) (a u_h^{n-1}, \bar v_h)\\
\tfrac{1}{\tau} (p_h^{n}, \bar q_h) + \theta (\dx u_h^{n}, \bar q_h) &= \tfrac{1}{\tau} (p_h^{n-1}, \bar q_h) - (1-\theta) (\dx u_h^{n-1}, \bar q_h).
\end{align*}
The well-posedness of the problem of determining $(u_h^{n},p_h^{n})$ from $(u_h^{n-1},p_h^{n-1})$ can 
then be shown with the same arguments as used in Lemma~\ref{lem:stath}, and we obtain
\begin{lemma}
Let $0 \le a(x) \le a_1$ and $Q_h \subset \dx V_h$. 
Then for any $u_0,p_0\in L^2(0,1)$ and any $\theta \ge 0$, 
Problem~\ref{prob:theta} is well-posed and has a 
unique solution $( u_h^n,p_h^n)$ for all $n \ge 0$.
\end{lemma}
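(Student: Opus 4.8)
The plan is to reduce the question of determining $(u_h^n, p_h^n)$ from the previous time level to a stationary saddle-point problem of exactly the same algebraic structure as Problem~\ref{prob:stath}, and then invoke Lemma~\ref{lem:stath}. Concretely, I would start from the equivalent reformulation of \eqref{prob:theta1}--\eqref{prob:theta2} already displayed in the excerpt, in which all unknown quantities at time level $n$ appear on the left-hand side and everything involving $(u_h^{n-1}, p_h^{n-1})$ is collected on the right. Since the right-hand sides depend only on the known data from step $n-1$, they define bounded linear functionals $\bar v_h \mapsto \ell_1(\bar v_h)$ and $\bar q_h \mapsto \ell_2(\bar q_h)$ on $V_h$ and $Q_h$; by the Riesz representation theorem these can be written as $(\bar f, \bar v_h)$ and $(\bar g, \bar q_h)$ for suitable $\bar f, \bar g \in L^2(0,1)$.

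Next I would rescale to match the bilinear form of Problem~\ref{prob:stath} exactly. Dividing the first equation by $\theta$ (the case $\theta=0$ is degenerate and handled separately, see below), the left-hand side becomes
\begin{align*}
\bigl(\tfrac{1}{\theta\tau}\, u_h^n,\,\bar v_h\bigr) - (p_h^n,\dx\bar v_h) + (a\, u_h^n,\bar v_h),
\end{align*}
which has precisely the form appearing in Lemma~\ref{lem:stath} with the damping coefficient $a$ replaced by the shifted coefficient
\begin{align*}
\tilde a(x) := a(x) + \tfrac{1}{\theta\tau}.
\end{align*}
The second equation $(\dx u_h^n,\bar q_h) = (\tilde g,\bar q_h)$ is already of the required form. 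The decisive observation is that $\tilde a$ inherits the hypotheses of Lemma~\ref{lem:stath}: since $0 \le a(x) \le a_1$ and $\theta,\tau>0$, one has
\begin{align*}
0 < \tfrac{1}{\theta\tau} \le \tilde a(x) \le a_1 + \tfrac{1}{\theta\tau},
\end{align*}
so $\tilde a$ is uniformly positive and bounded with the \emph{strictly positive} lower bound $\tilde a_0 = 1/(\theta\tau)$, \emph{even though the original lower bound on $a$ may be zero}. This is exactly why the lemma only assumes $0 \le a(x)$ rather than $a_0 \le a(x)$: the time-stepping itself supplies the coercivity through the mass term. The compatibility condition $Q_h \subset \dx V_h$ is unchanged and carries over verbatim.

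With these observations in place, Lemma~\ref{lem:stath} applied to the coefficient $\tilde a$ and the data $(\bar f,\bar g)$ yields existence and uniqueness of $(u_h^n,p_h^n) \in V_h \times Q_h$ at each step, and induction on $n \ge 1$ starting from the prescribed $(u_h^0,p_h^0)=(\pi_h u_0,\rho_h p_0)$ completes the argument. The main obstacle, though a mild one, is the boundary case $\theta=0$ (explicit in the velocity--pressure coupling): here the first equation no longer has the $(p_h^n,\dx\bar v_h)$ coupling on the left, so the saddle-point reduction above does not apply directly. In that case one should instead solve the two equations sequentially, noting that the $n$th step decouples into two symmetric positive-definite mass-matrix systems, whose solvability is immediate and independent of the inf-sup condition; I would treat this as a short separate remark so as not to obscure the main line of reasoning.
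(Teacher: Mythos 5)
Your reduction to Lemma~\ref{lem:stath} breaks down at the second equation, and this is a genuine gap rather than a presentational issue. In the reformulated system you start from, the pressure equation at time level $n$ reads
\begin{align*}
\tfrac{1}{\tau}(p_h^n,\bar q_h) + \theta(\dx u_h^n,\bar q_h)
= \tfrac{1}{\tau}(p_h^{n-1},\bar q_h) - (1-\theta)(\dx u_h^{n-1},\bar q_h),
\end{align*}
so the unknown $p_h^n$ enters through the mass term $\tfrac{1}{\tau}(p_h^n,\bar q_h)$, which cannot be moved to the right-hand side. Your assertion that ``the second equation $(\dx u_h^n,\bar q_h)=(\tilde g,\bar q_h)$ is already of the required form'' silently drops this term. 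Consequently the time-step system is \emph{not} of the form of Problem~\ref{prob:stath}: there the $(2,2)$ block of the saddle-point operator is zero (which is exactly why the inf-sup condition and kernel coercivity are needed in Brezzi's lemma), whereas here the $(2,2)$ block is a positive multiple of the pressure mass matrix. The shift $\tilde a = a + \tfrac{1}{\theta\tau}$ correctly handles the first equation, but Lemma~\ref{lem:stath} cannot be invoked for the coupled system as you propose. Note also that your own treatment of $\theta=0$ (two decoupled mass-matrix solves) relies on precisely the pressure mass term you discard for $\theta>0$.

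The repair is easy, and in fact the dropped term makes the problem easier, not harder. The system for $(u_h^n,p_h^n)$ is a square linear system on the finite-dimensional space $V_h\times Q_h$, so it suffices to show that the homogeneous system has only the trivial solution. Taking $\bar v_h=u_h^n$ in \eqref{prob:theta1} and $\bar q_h=p_h^n$ in \eqref{prob:theta2} with zero data and adding, the coupling terms $-\theta(p_h^n,\dx u_h^n)$ and $\theta(\dx u_h^n,p_h^n)$ cancel, leaving
\begin{align*}
\tfrac{1}{\tau}\|u_h^n\|^2 + \theta(a\,u_h^n,u_h^n) + \tfrac{1}{\tau}\|p_h^n\|^2 = 0,
\end{align*}
whence $u_h^n=p_h^n=0$ since $a\ge 0$ and $\theta\ge 0$. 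This covers every $\theta\ge 0$, including $\theta=0$, in one stroke, and it does not even use $Q_h\subset\dx V_h$. Alternatively, one can eliminate $p_h^n$ from the pressure equation (a mass-matrix solve) and obtain a positive definite Schur-complement system for $u_h^n$. For comparison, the paper itself only sketches the proof by appealing to ``the same arguments'' as in Lemma~\ref{lem:stath}, i.e., an inf-sup/coercivity analysis adapted to the reformulated system; your instinct to connect the time step to the stationary lemma therefore matches the paper's intent, but the exact structural identification you assert is false, and the proof as written would not go through.
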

\noindent 
Uniform bounds for the solution can again be obtained via energy arguments.

\subsection{Discrete energy estimates}

For the stability analysis of the fully discrete problem, we utilize energy estimates similar as 
on the continuous and the semi-discrete level. Given a solution $\{(u_h^n,p_h^n)\}$ 
of Problem~\ref{prob:theta}, we define the discrete energies at time $t^n$ by
\begin{align*}
E_h^{k,n} = \frac{1}{2}\big( \|\bar\partial_\tau^k u_h^n\|^2 + \|\bar\partial_\tau^k p_h^n\|^2 \big), \qquad k \ge 0.
\end{align*}
By appropriate testing of the fully discrete scheme \eqref{prob:theta1}--\eqref{prob:theta2} and with 
similar arguments as on the continuous level, we now obtain the following energy identities.
\begin{lemma}[Discrete energy-identity] \label{lem:enbal} 
Let $\{(u_h^n,p_h^n)\}$ be a solution of Problem~\ref{prob:theta}.
Then 
\begin{align} \label{enbal}
\bar\partial_\tau E_h^{k,n} 
= -(\theta-\tfrac{1}{2}) \tau \big(\|\bar \partial_\tau^{k+1} u_h^{n}\|^2+\|\bar\partial_\tau^{k+1} p_h^{n}\|^2\big) 
- (a \bar \partial_\tau^k u_h^{n,\theta}, \bar\partial_\tau^k u_h^{n,\theta}).
\end{align}
\end{lemma}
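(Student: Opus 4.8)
The plan is to prove the discrete energy identity \eqref{enbal} by directly testing the fully discrete scheme \eqref{prob:theta1}--\eqref{prob:theta2} with suitable choices of discrete test functions, mirroring the continuous computation in Lemma~\ref{lem:ee1h} but keeping careful track of the quadratic terms generated by the backward difference quotient at level $\theta$. First I would treat the case $k=0$ and then reduce the general case $k \ge 1$ to it. The key algebraic observation is the identity
\begin{align*}
(\bar\partial_\tau u_h^n, u_h^{n,\theta}) = \tfrac{1}{2} \bar\partial_\tau \|u_h^n\|^2 + (\theta - \tfrac{1}{2}) \tau \|\bar\partial_\tau u_h^n\|^2,
\end{align*}
which follows by writing $u_h^{n,\theta} = \tfrac{1}{2}(u_h^n + u_h^{n-1}) + (\theta - \tfrac{1}{2})(u_h^n - u_h^{n-1})$, recognizing that $u_h^n - u_h^{n-1} = \tau \bar\partial_\tau u_h^n$, and using the telescoping formula $(u_h^n - u_h^{n-1}, u_h^n + u_h^{n-1}) = \|u_h^n\|^2 - \|u_h^{n-1}\|^2 = 2\tau \bar\partial_\tau E$. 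The same identity holds verbatim with $p$ in place of $u$.

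The main computation for $k=0$ then runs as follows. I would test \eqref{prob:theta1} with $\bar v_h = u_h^{n,\theta} \in V_h$ and \eqref{prob:theta2} with $\bar q_h = p_h^{n,\theta} \in Q_h$, which is the natural discrete analogue of testing with $(u_h, p_h)$ on the continuous level. Adding the two equations, the cross terms $-(p_h^{n,\theta}, \dx u_h^{n,\theta})$ and $(\dx u_h^{n,\theta}, p_h^{n,\theta})$ cancel exactly, exactly as the integration-by-parts terms cancel in Lemma~\ref{lem:ee1h}. This leaves
\begin{align*}
(\bar\partial_\tau u_h^n, u_h^{n,\theta}) + (\bar\partial_\tau p_h^n, p_h^{n,\theta}) + (a u_h^{n,\theta}, u_h^{n,\theta}) = 0,
\end{align*}
and applying the polarization identity above to each of the first two inner products converts them into $\bar\partial_\tau E_h^{0,n}$ plus the term $(\theta - \tfrac{1}{2})\tau(\|\bar\partial_\tau u_h^n\|^2 + \|\bar\partial_\tau p_h^n\|^2)$. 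Rearranging yields precisely \eqref{enbal} for $k=0$.

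For the general case I would invoke the by-now-standard reduction used in Lemmas~\ref{lem:ee1} and \ref{lem:ee1h}: the difference quotient $(\bar\partial_\tau^k u_h^n, \bar\partial_\tau^k p_h^n)$ again solves the scheme \eqref{prob:theta1}--\eqref{prob:theta2}, because the scheme is linear with time-independent coefficients and the operators $\bar\partial_\tau^k$ commute with the spatial forms and with the $\theta$-averaging. One has to check that the boundary-free structure is preserved, i.e. that $\bar\partial_\tau^k u_h^{n,\theta}$ and $\bar\partial_\tau^k p_h^{n,\theta}$ are the correct $\theta$-averages of the differenced sequences, which is immediate from the definitions since $\bar\partial_\tau^k$ and the averaging $(\cdot)^{n,\theta}$ are both linear combinations with constant coefficients and hence commute. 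Applying the $k=0$ result to $(\bar\partial_\tau^k u_h^n, \bar\partial_\tau^k p_h^n)$ then gives the stated identity. I expect the only genuine obstacle to be the correct bookkeeping of the $(\theta - \tfrac{1}{2})$ coefficient and the factor of $\tau$ in the polarization identity; everything else is a direct transcription of the continuous and semi-discrete arguments, with the crucial feature being that the skew-symmetric coupling between the two equations guarantees exact cancellation of the cross terms independently of $\theta$.
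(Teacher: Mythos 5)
Your proposal is correct and follows essentially the same route as the paper: testing \eqref{prob:theta1}--\eqref{prob:theta2} with $\bar v_h = u_h^{n,\theta}$, $\bar q_h = p_h^{n,\theta}$ for $k=0$, and reducing $k\ge 1$ to $k=0$ via linearity of the scheme. The polarization identity $(\bar\partial_\tau u_h^n, u_h^{n,\theta}) = \tfrac{1}{2}\bar\partial_\tau\|u_h^n\|^2 + (\theta-\tfrac{1}{2})\tau\|\bar\partial_\tau u_h^n\|^2$ and the commutation of $\bar\partial_\tau^k$ with the $\theta$-averaging, which you spell out, are exactly the details the paper's terse proof leaves implicit.
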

\begin{proof}
The result for $k=0$ follows by setting $\bar v_h = u_h^{n,\theta}$ and $\bar q_h = p_h^{n,\theta}$ in
\eqref{prob:theta1}-\eqref{prob:theta2}. The estimate for $k \ge 1$ reduces to the one for $k=0$ by 
observing that, due to linearity of the problem, the differences $(\bar \partial_\tau^k u_h^n, \bar \partial_\tau^k p_h^n)$ 
solve the system \eqref{prob:theta1}-\eqref{prob:theta2} as well. 
\end{proof}

Observe that, without further arguments, a decay of the discrete energy can only be guaranteed, 
if we require $\theta \ge 1/2$.  
%
With similar arguments as already used for the analysis of the time-continuous problem, 
we can also establish the exponential decay of the energies for the fully discrete setting.
\begin{theorem}[Discrete exponential stability] \label{thm:ee2hn}
Let $0<a_0\le a(x)\le a_1$ and $\frac{1}{2} < \theta \le 1$.
Moreover, assume that $Q_h=\dx V_h$ and $1\in V_h$. 
Then for any $0<\tau \le \tau_0$ sufficiently small, there holds
\begin{align*}
E_h^{k,n} \le 3 e^{-\alpha (n-m) \tau} E_h^{k,m} \qquad \text{for all } k \le m \le n
\end{align*}
with decay rate $\alpha=\frac{2}{3}a_0^3/(8a_0^2+4a_0^2a_1+3a_0a_1+4a_1^4)$.
\end{theorem}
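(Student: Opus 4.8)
The plan is to mirror the continuous proof of Theorem~\ref{thm:ee2} in the discrete setting, working first with the case $k=1$ and then reducing the remaining cases to it. I would introduce a modified discrete energy of the form $E_{h,\eps}^{1,n} = E_h^{1,n} + \eps (\bar\partial_\tau u_h^{n,\theta}, u_h^{n,\theta})$, in analogy with the continuous modification $\E^1_\eps(t) = \E^1(t) + \eps(\dt u(t),u(t))$ used in the appendix. The first task is to establish equivalence of $E_{h,\eps}^{1,n}$ and $E_h^{1,n}$ for $\eps$ small enough, giving $\tfrac12 E_h^{1,n} \le E_{h,\eps}^{1,n} \le \tfrac32 E_h^{1,n}$; this rests on a discrete analogue of Lemma~\ref{lem:a2}, which is exactly where the compatibility conditions $Q_h = \dx V_h$ and $1 \in V_h$ enter (they supply the discrete Poincaré-type bound relating $\|u_h\|$ to $\|\dx u_h\|$ and hence to $\|p_h\|$ via the variational coupling).

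The central computation is then to show a discrete differential inequality $\bar\partial_\tau E_{h,\eps}^{1,n} \le -c\,\eps\, E_{h,\eps}^{1,n}$ for a suitable constant $c$ (matching the factor $\tfrac23$ from the continuous estimate $\tfrac{d}{dt}\E^1_\eps \le -\tfrac23\eps\,\E^1_\eps$). Here I would start from the exact discrete energy identity of Lemma~\ref{lem:enbal}, which for $k=1$ gives $\bar\partial_\tau E_h^{1,n} = -(\theta-\tfrac12)\tau(\|\bar\partial_{\tau}^2 u_h^n\|^2 + \|\bar\partial_{\tau}^2 p_h^n\|^2) - (a\,\bar\partial_\tau u_h^{n,\theta}, \bar\partial_\tau u_h^{n,\theta})$. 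The assumption $\theta > \tfrac12$ is what makes the first term nonpositive, so it can be discarded (or kept to absorb error terms), and the dissipation term $-(a\,\bar\partial_\tau u_h^{n,\theta},\bar\partial_\tau u_h^{n,\theta}) \le -a_0\|\bar\partial_\tau u_h^{n,\theta}\|^2$ is the engine of the decay. To control the cross term one differentiates $(\bar\partial_\tau u_h^{n,\theta}, u_h^{n,\theta})$ by the discrete product rule and substitutes the scheme \eqref{prob:theta1}--\eqref{prob:theta2}, producing a term $-\|p_h^{n,\theta}\|^2$ (after using $\dx V_h = Q_h$ so that $(p_h^{n,\theta},\dx u_h^{n,\theta})$ closes up with the pressure energy) plus lower-order terms; these, together with $a_0$-coercivity on the velocity, yield control of the full energy $\|\bar\partial_\tau u_h^{n,\theta}\|^2 + \|p_h^{n,\theta}\|^2$, which is the discrete analogue of Lemma~\ref{lem:a4}.

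The main obstacle, and the genuinely new feature compared to the semi-discrete case, is the appearance of the extra quadratic remainder terms carrying factors of $\tau$. The discrete product rule $\bar\partial_\tau(f^n g^n) = (\bar\partial_\tau f^n) g^{n,\theta} + f^{n,\theta}(\bar\partial_\tau g^n) + (\theta-\tfrac12)\tau\,(\bar\partial_\tau f^n)(\bar\partial_\tau g^n)$-type identities generate $O(\tau)$ cross terms, and the midpoint values $u_h^{n,\theta}$ differ from the nodal values $u_h^n$ by $O(\tau)\bar\partial_\tau u_h^n$. These must be bounded by the already-available dissipation and by the negative term $-(\theta-\tfrac12)\tau(\|\bar\partial_\tau^2 u_h^n\|^2+\cdots)$ from Lemma~\ref{lem:enbal}; this is precisely what forces the smallness restriction $0 < \tau \le \tau_0$ and produces the modified decay rate $\alpha = \tfrac23 a_0^3/(8a_0^2 + 4a_0^2 a_1 + 3a_0 a_1 + 4a_1^4)$, which differs slightly from the continuous rate. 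Once the discrete Grönwall-type inequality $E_{h,\eps}^{1,n} \le (1+c\eps\tau)^{-(n-m)} E_{h,\eps}^{1,m}$ is in hand, one converts it via $(1+c\eps\tau)^{-(n-m)} \le e^{-\alpha(n-m)\tau}$ and the energy equivalence to obtain the stated bound for $k=1$. Finally, the case $k=0$ follows by the explicit construction used in the appendix (Section~\ref{sec:a3}), and for $k \ge 2$ one applies the result to the differences $(\bar\partial_\tau^k u_h^n, \bar\partial_\tau^k p_h^n)$, which solve the same scheme by linearity as noted in Lemma~\ref{lem:enbal}.
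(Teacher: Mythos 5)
Your proposal follows essentially the same route as the paper's appendix proof: the modified energy $E_{h,\eps}^{1,n}$, its equivalence to $E_h^{1,n}$ via a discrete analogue of Lemma~\ref{lem:a2} (which is where $Q_h=\dx V_h$ and $1\in V_h$ enter), the discrete dissipation inequality from Lemma~\ref{lem:enbal} with the $O(\tau)$ remainders absorbed by the $-(\theta-\tfrac12)\tau$ term (forcing $\tau\le\tau_0$ and the modified rate), a discrete Gr\"onwall step, and the reduction of $k=0$ and $k\ge 2$ to $k=1$ exactly as in Section~\ref{sec:a3}. The only blemishes are cosmetic: substituting the scheme for the differences $(\bar\partial_\tau u_h^n,\bar\partial_\tau p_h^n)$ produces $-\|\bar\partial_\tau p_h^{n,\theta}\|^2$ rather than $-\|p_h^{n,\theta}\|^2$ (as it must, since $E_h^{1,n}$ contains $\|\bar\partial_\tau p_h^n\|^2$), and the paper's cross term is $(\bar\partial_\tau u_h^n,u_h^{n,\theta})$ rather than $(\bar\partial_\tau u_h^{n,\theta},u_h^{n,\theta})$, but neither affects the argument.
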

\begin{proof}
The proof follows with similar arguments as used for the time-continuous case; details are given again in the 
appendix. An upper bound for the maximal stepsize $\tau_0$ depending only on $a_0, a_1$, and $\theta$, 
is given in Lemma~\ref{lem:a7}.
\end{proof}


\begin{remark}[A second order method] \label{rem:adaptive1}
A careful inspection of the proof of Theorem~\ref{thm:ee2hn} reveals that the assertion of Theorem~\ref{thm:ee2hn} 
remains valid if one chooses $\theta=\frac{1}{2}+\lambda \tau$ with $\lambda$ sufficiently large; see the corresponding remarks in the appendix. As we will illustrate by numerical tests, the uniform and unconditional exponential stability gets lost, however, for the choice $\theta=1/2$ corresponding to the Crank-Nicolson scheme. 
\end{remark}

\begin{remark}
The above results hold unconditionally, i.e., without restriction on the time step size $\tau$. 
Without going into details, let us note that under strong restrictions on the time step size, 
uniform exponential stability can also be shown for the schemes with $0 \le \theta\le 1/2$. 
The reason for this is that the stability of the time-continuous problem dominates the discretization error 
in that case.
\end{remark}

\subsection{An inhomogeneous problem}

In the derivation of the error estimate, we will again arrive at a problem with inhomogeneous right hand sides. 

\begin{problem}[Inhomogeneous problem] \label{prob:thetainhom}
Let $f^{n}$, $g^{n}$, $n \ge 1$ be given and set $w_h^0=0$ and $r_h^0=0$.
Then, for $n \ge 1$, find $(w_h^n,r_h^n) \in V_h\times Q_h$ such that 
\begin{align}
(\bar\partial_\tau w_h^n,\bar v_h)-(r_h^{n,\theta},\dx \bar v_h)+(a w_h^{n,\theta},\bar v_h)&=(f^{n},\bar v_h) &&\text{for all }\bar v_h\in V_h \label{prob:thetainhom1}\\
(\bar\partial_\tau r_h^n,\bar q_h)+(\dx w_h^{n,\theta},\bar q_h)&=(g^{n},\bar q_h) &&\text{for all }\bar q_h\in Q_h.\label{prob:thetainhom2}
\end{align}
\end{problem}
Using the estimates for the homogeneous problem and a discrete version of the variation of constants formula, we arrive at the following a-priori bound.
\begin{lemma} \label{lem:instathn2}
Let the assumptions of Theorem~\ref{thm:ee2hn} hold.
Then for any $0<\tau\le\tau_0$ sufficiently small and any $\{f^{n}\},\{g^{n}\}\subset L^2(0,1)$, 
Problem~\ref{prob:thetainhom} has a unique solution and
\begin{align*}
 \|w_h^n\|^2+\|r_h^n\|^2 \le 3 \sum\nolimits_{i=1}^n \tau e^{-\alpha (n-i)\tau} \big(\|f^{i}\|^2+\|g^{i}\|^2\big).
\end{align*}
\end{lemma}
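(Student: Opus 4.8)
The plan is to mimic the continuous-level variation of constants argument from Lemma~\ref{lem:instath2}, transferred to the fully discrete time-stepping setting. First I would rewrite Problem~\ref{prob:thetainhom} as a one-step recursion in the finite-dimensional space $V_h \times Q_h$. Collecting the unknowns into $Y_h^n = (w_h^n, r_h^n)$ and using the equivalent implicit form (as displayed just after Problem~\ref{prob:theta}), the scheme reads $M_\theta Y_h^n = M_{1-\theta}' Y_h^{n-1} + \tau F^n$, where $M_\theta$ encodes the $\theta$-weighted left-hand side and $F^n=(f^n,g^n)$. Existence and uniqueness of $Y_h^n$ follow for each $n$ exactly as in Lemma~\ref{lem:stath}, since the left-hand operator has the same saddle-point structure under the conditions $Q_h = \dx V_h$ and $1 \in V_h$. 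Solving for $Y_h^n$ gives a recursion $Y_h^n = S_\tau Y_h^{n-1} + \tau \tilde F^n$, where $S_\tau = M_\theta^{-1} M_{1-\theta}'$ is the discrete solution operator for one step and $\tilde F^n = M_\theta^{-1} F^n$.

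Next I would unroll this recursion using $w_h^0 = r_h^0 = 0$, obtaining the discrete Duhamel formula
\begin{align*}
Y_h^n = \sum_{i=1}^n \tau\, S_\tau^{\,n-i}\, \tilde F^i.
\end{align*}
The central ingredient is a bound on the powers of the discrete propagator. Applying Theorem~\ref{thm:ee2hn} to the homogeneous scheme with arbitrary initial data shows that the one-step map contracts the energy as $E_h^{0,n} \le 3 e^{-\alpha(n-m)\tau} E_h^{0,m}$, which translates into the operator estimate $\|S_\tau^{\,n-i}\|^2 \le 3\, e^{-\alpha(n-i)\tau}$ in the $L^2 \times L^2$ norm. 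Here I would be careful to separate the action of the inhomogeneity from the propagation: the term $\tilde F^i$ arising from one full implicit step carries the factor $M_\theta^{-1}$, so I would absorb the norm of this solve into the right-hand side consistently, checking that the bookkeeping reproduces exactly the stated constant $3$ and leaves $\|f^i\|^2 + \|g^i\|^2$ under the sum.

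Combining the Duhamel representation with the propagator bound and applying the Cauchy–Schwarz inequality to the sum then yields
\begin{align*}
\|w_h^n\|^2 + \|r_h^n\|^2 = \|Y_h^n\|^2 \le 3 \sum_{i=1}^n \tau\, e^{-\alpha(n-i)\tau}\big(\|f^i\|^2 + \|g^i\|^2\big),
\end{align*}
which is the claimed estimate. The bound $0<\tau \le \tau_0$ enters only through the hypotheses of Theorem~\ref{thm:ee2hn}, ensuring the exponential decay of the discrete homogeneous problem is available.

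The main obstacle I anticipate is the precise handling of the inhomogeneous forcing in the discrete variation of constants formula. On the continuous level the forcing enters the evolution directly, but in the implicit $\theta$-scheme one must decide how the right-hand sides $f^n, g^n$ couple into the one-step solve, and it is easy to pick up spurious factors of $M_\theta^{-1}$ or extra powers of $\tau$ that would spoil the clean constant. Establishing the operator norm bound $\|S_\tau^{\,n-i}\|^2 \le 3 e^{-\alpha(n-i)\tau}$ rigorously from the energy decay—rather than merely quoting it—requires interpreting Theorem~\ref{thm:ee2hn} as a statement about the homogeneous solution operator acting on all admissible initial data, which is the step where I would spend the most care.
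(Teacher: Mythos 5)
Your argument is essentially the paper's own proof: the paper disposes of this lemma with a one-line appeal to the argument of Lemma~\ref{lem:instath2}, i.e.\ precisely the discrete variation of constants formula combined with the propagator bound $\|S_\tau^{n-i}\|^2 \le 3\,e^{-\alpha(n-i)\tau}$ read off from Theorem~\ref{thm:ee2hn}, which is exactly what you spell out. The one item you defer to ``bookkeeping''---that the implicit solve $M_\theta^{-1}=(I+\tau\theta A_h)^{-1}$ acting on the forcing costs no extra constant---is settled by the monotonicity of the discrete operator, $(A_h Y_h, Y_h) = (a\, w_h, w_h) \ge 0$, which makes $M_\theta^{-1}$ an $L^2\times L^2$ contraction; this is the same level of detail the paper itself leaves implicit.
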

\begin{proof}
The result follows with similar arguments as that of Lemma~\ref{lem:instath2}. 
\end{proof}


\subsection{Error estimates}

We are now in the position to prove our main error estimate for the fully discrete scheme.
By carefully adopting standard arguments \cite{Dupont73} in order to take advantage of the discrete exponential stability, we obtain estimates uniform in time.
\begin{theorem}[Error estimate for the full discretization] $ $\\
Let the assumptions of Theorem~\ref{thm:ee2hn} hold and  $0 < \tau \le \tau_0$ be sufficiently small. Then
\begin{align*}
 &\| u^n- u_h^n\|^2+\| p^n-p_h^n\|^2 \le \| u^n-\pi_h u^n\|^2+\| p^n-\rho_h p^n\|^2\\
 &\qquad\qquad +6\sum\nolimits_{i=1}^n \tau e^{-\alpha (n-i)\tau} \big( a_1^2\|\pi_h u^{i,\theta}- u^{n,\theta}\|^2  + \|\dx(\pi_h u^{i,\theta}-u^{n,\theta})\|^2\big)\\
 &\qquad\qquad+ C \tau^2 \sum\nolimits_{i=1}^n \tau e^{-\alpha (n-i)\tau} \big(\|\dtt u(\xi^i)\|^2+\|\dtt p(\eta^i)\|^2 \big).
%
\end{align*}
with $\xi^i$, $\eta^i \in (t^{i-1},t^{i})$, 
and constant $C$ independent of $\tau$.
\end{theorem}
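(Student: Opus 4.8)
The plan is to adapt the error-splitting technique used in the proof of Theorem~\ref{thm:eegalerkin} to the fully discrete setting, replacing the semi-discrete stability bound by its fully discrete counterpart from Lemma~\ref{lem:instathn2}. First I would split the error by means of the $L^2$-projections, writing $u^n-u_h^n=(u^n-\pi_h u^n)+w_h^n$ and $p^n-p_h^n=(p^n-\rho_h p^n)+r_h^n$ with the discrete error components
\begin{align*}
w_h^n=\pi_h u^n-u_h^n, \qquad r_h^n=\rho_h p^n-p_h^n .
\end{align*}
Since $u^n-\pi_h u^n$ is $L^2$-orthogonal to $V_h\ni w_h^n$ and $p^n-\rho_h p^n$ is $L^2$-orthogonal to $Q_h\ni r_h^n$, the cross terms drop out and we obtain the Pythagorean identity
\begin{align*}
\|u^n-u_h^n\|^2+\|p^n-p_h^n\|^2
&= \|u^n-\pi_h u^n\|^2+\|p^n-\rho_h p^n\|^2+\|w_h^n\|^2+\|r_h^n\|^2 .
\end{align*}
This already explains the appearance of the projection terms with coefficient one and reduces the task to bounding $\|w_h^n\|^2+\|r_h^n\|^2$; by construction $w_h^0=r_h^0=0$.

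The key step is to identify the equations satisfied by $(w_h^n,r_h^n)$. I would subtract the fully discrete scheme \eqref{prob:theta1}--\eqref{prob:theta2} from the $\theta$-weighted average (weights $\theta$ at $t^n$, $1-\theta$ at $t^{n-1}$) of the weak formulation of Problem~\ref{prob:instat}. Using that $\bar\partial_\tau$ commutes with the time-independent projections, the defining properties of $\pi_h$ and $\rho_h$, and the compatibility condition $Q_h=\dx V_h$ (so that $\dx\bar v_h\in Q_h$), the projection errors of $p$ cancel exactly and one arrives at
\begin{align*}
(\bar\partial_\tau w_h^n,\bar v_h)-(r_h^{n,\theta},\dx\bar v_h)+(a w_h^{n,\theta},\bar v_h)&=(f^n,\bar v_h),\\
(\bar\partial_\tau r_h^n,\bar q_h)+(\dx w_h^{n,\theta},\bar q_h)&=(g^n,\bar q_h),
\end{align*}
with right hand sides
\begin{align*}
f^n&=\big(\bar\partial_\tau u^n-\dt u^{n,\theta}\big)+a\big(\pi_h u^{n,\theta}-u^{n,\theta}\big),\\
g^n&=\big(\bar\partial_\tau p^n-\dt p^{n,\theta}\big)+\dx\big(\pi_h u^{n,\theta}-u^{n,\theta}\big).
\end{align*}
Here $\dt u^{n,\theta}=\theta\dt u(t^n)+(1-\theta)\dt u(t^{n-1})$ etc. Thus $(w_h^n,r_h^n)$ solves the inhomogeneous Problem~\ref{prob:thetainhom}, and Lemma~\ref{lem:instathn2} yields $\|w_h^n\|^2+\|r_h^n\|^2\le 3\sum_{i=1}^n\tau e^{-\alpha(n-i)\tau}(\|f^i\|^2+\|g^i\|^2)$.

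It remains to estimate the data. Using $(a+b)^2\le 2a^2+2b^2$, I would split $f^i$ and $g^i$ into a spatial projection part, bounded by $2a_1^2\|\pi_h u^{i,\theta}-u^{i,\theta}\|^2$ and $2\|\dx(\pi_h u^{i,\theta}-u^{i,\theta})\|^2$ respectively, which combined with the factor $3$ above produces the middle term with constant $6$; and a temporal consistency part $\bar\partial_\tau u^i-\dt u^{i,\theta}$, $\bar\partial_\tau p^i-\dt p^{i,\theta}$. The latter is the genuinely new ingredient and the main obstacle. Writing $\bar\partial_\tau u^i=\tau^{-1}\int_{t^{i-1}}^{t^i}\dt u\,ds$ and expanding $\dt u^{i,\theta}$ by Taylor's theorem with the Lagrange form of the remainder, one obtains $\|\bar\partial_\tau u^i-\dt u^{i,\theta}\|\le C\tau\,\|\dtt u(\xi^i)\|$ for some $\xi^i\in(t^{i-1},t^i)$, and analogously $\|\bar\partial_\tau p^i-\dt p^{i,\theta}\|\le C\tau\,\|\dtt p(\eta^i)\|$; the leading coefficient is $(\tfrac12-\theta)$, so the error is of first order for $\theta\neq\tfrac12$, which matches the factor $C\tau^2$ in the statement. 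Substituting these bounds into the stability estimate and inserting the result into the Pythagorean identity then gives the claimed inequality. The principal care lies in the consistency estimate and in using the $\theta$-averaged continuous equations consistently with the averaging built into the scheme; the remaining manipulations are routine.
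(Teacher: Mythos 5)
Your proposal is correct and follows essentially the same route as the paper's proof: split the error via the $L^2$-projections $\pi_h,\rho_h$, show that the discrete error components $(w_h^n,r_h^n)$ solve the inhomogeneous Problem~\ref{prob:thetainhom} with exactly the right-hand sides $f^n$, $g^n$ you identify, apply the stability bound of Lemma~\ref{lem:instathn2}, and estimate the data by a spatial projection part plus a Taylor consistency part of order $\tau$. Your explicit use of the $L^2$-orthogonality (Pythagorean identity) is a small sharpening rather than a different method: it cleanly justifies the coefficient $1$ on the projection terms in the squared-norm estimate, which the paper's triangle-inequality splitting leaves implicit.
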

\noindent 
For ease of notation, we used the symbols $ u^n=u(t^n)$ and $ u^{n,\theta}=\theta u^n+(1-\theta)\u^{n-1}$ here.
\begin{proof}
 Similar as in the proof of Theorem \ref{thm:eegalerkin}, we use the error splitting
\begin{align*}
&\| u^n- u_h^n\| + \| p^n-p_h^n\| \\
&\qquad \le \big(\| u^n-\pi_h  u^n\| + \| p^n-\rho_h  p^n\|\big) + \big( \|\pi_h  u^n -  u_h^n\| + \|\rho_h
 p^n - p_h^n\|\big).
\end{align*}
The first term already appears in the final estimate. 
To bound the discrete error components $ w_h^n=\pi_h u^n- u_h^n$ and $ r_h^n=\rho_h p^n-p_h^n$, observe that
\begin{align*}
&(\bar\partial_\tau w_h^n,\bar v_h)-( r_h^{n,\theta},\dx\bar v_h)+(a w_h^{n,\theta},\bar v_h)
\\&\qquad \qquad \qquad 
=(a\pi_h u^{n,\theta}-a u^{n,\theta},\bar v_h)+ (\bar\partial_\tau u^n-\dt u^{n,\theta},\bar v_h),
\end{align*}
where we used the definition of the continuous and the fully discrete solution and the properties of the
projections $\pi_h$ and $\rho_h$. In a similar manner, we obtain
\begin{align*}
&(\bar\partial_\tau r_h^n,\bar q_h)+ (\dx w_h^{n,\theta},\bar q_h)
 +(\dx\pi_h u^{n,\theta}-\dx u_h^{n,\theta},\bar q_h)
\\ & \qquad \qquad \qquad 
=(\dx\pi_h u^{n,\theta}-\dx u^{n,\theta},\bar q_h)+(\bar\partial_\tau p^n-\dt p^{n,\theta},\bar q_h).
\end{align*}
The discrete error $( w_h^n, r_h^n)$ thus solves Problem~\ref{prob:thetainhom} with inhomogeneous 
right hand sides $f^{n}=(a\pi_h u^{n,\theta}-a u^{n,\theta})+(\bar\partial_\tau u^n-\dt u^{n,\theta})$ and $g^{n}=(\dx\pi_h u^{n,\theta}-\dx u^{n,\theta})+(\bar\partial_\tau p^n-\dt p^{n,\theta})$.
By elementary estimates, one further obtains 
$\|f^n\| \le a_1 \|\pi_h u^{n,\theta} - u^{n,\theta}\| + \tau \|\bar\partial_{\tau\tau} u(\xi^n)\|$
and also
$ \|g^n\| \le \|\dx \pi_h u^{n,\theta} - \dx u^{n,\theta}\| + \tau \|\bar\partial_{\tau\tau} p(\eta^n)\|$ 
with appropriate values of $\xi^n,\eta^n \in (t^{n-1},t^n)$. 
The assertion of the theorem then follows by the a-priori bounds of Lemma~\ref{lem:instathn2}. 
%
%
\end{proof}

\begin{remark}[A second order scheme] \label{rem:adaptive2} $ $\\
For the choice $\theta=\frac{1}{2}+\lambda \tau$, one can use Taylor estimates
$
\|\bar\partial_\tau u^n-\dt u^n\| 
=O(\tau^2)
$
and
$
\|\bar\partial_\tau p^n-\dt p^n\| 
=O(\tau^2)
$
that are second order in time. The estimate of the previous theorem thus holds with $\tau^2$ and second derivatives replaced by $\tau^4$ and third derivatives in the last line.
The choice of the parameter $\theta=\frac{1}{2}+\lambda \tau$ thus allows to obtain a second order time discretization scheme with unconditional and uniform exponential stability.
\end{remark}

\begin{remark}[Convergence rates] \label{rem:fullrate}
In combination with the mixed finite element Galerkin approximation
discussed in Section~\ref{sec:mixed}, the time discretization scheme $\theta > 1/2$ yields
\begin{align*}
\|u(t^n) - u_h^n\|^2 + \|p(t^n) - p_h^n\|^2 
\le  C e^{-\alpha t^n} \big( h^{2k+2} + \tau^2 \big),
\end{align*}
for some $\alpha>0$ depending only on $a_0$, $a_1$, and $\theta$,
provided that the solution $(u,p)$ is sufficiently smooth.
For the adaptive choice $\theta = 1/2+\lambda \tau$ with $\lambda$ sufficiently large, 
one can even guarantee that
\begin{align*}
\|u(t^n) - u_h^n\|^2 + \|p(t^n) - p_h^n\|^2 
\le  C e^{-\alpha t^n} \big( h^{2k+2} + \tau^4 \big)
\end{align*}
with rate $\alpha$ depending on $a_0$, $a_1$, and $\lambda$. 
All these estimates are unconditional, i.e., no condition on the time step size is needed, 
and they hold uniformly in time. 
%
\end{remark}

%
%
%
%
%
%
%
%
%

\section{Numerical validation} \label{sec:num}

We now illustrate our theoretical results with some numerical test. 
In order to allow for analytic solutions and to guarantee sufficient smoothness, 
we choose $a\equiv const>0$. 

\subsection{Exponential convergence}
Let us start by comparing the decay behaviour of the continuous and the discrete solutions. 
Using separation of variables, one can see that 
\begin{align} \label{eq:solu}
u(x,t)=\exp(-at/2+\sqrt{a^2/4-\pi^2}t)\cos(\pi x)
\end{align}
and 
\begin{align} \label{eq:solp}
p(x,t)=1/\pi(-a/2-\sqrt{a^2/4-\pi^2})\exp(-at/2+\sqrt{a^2/4-\pi^2}t)\sin(\pi x)
\end{align}
solves the damped wave system~\eqref{eq:instat1}--\eqref{eq:instat4} with $a\equiv const$.

For our numerical tests, we choose $a=10$ and compute the discrete solutions with the mixed finite element approximation with $P_1$-$P_0$ elements for the velocity and pressure, and using the $\theta$-scheme with $\theta=1$ (implicit Euler) and $\theta=\frac{1}{2}+\tau$ (second order).
In Table \ref{table:expdecay}, we report about the energy decay for the exact, the semi-discrete, and the fully discrete solutions obtained with discretization parameters $h=\tau=10^{-3}$.
\begin{table}[ht!]
\renewcommand{\arraystretch}{1.2}
\begin{center}
\begin{tabular}{c||c|c|c|c|c|c||c}
$t^n$                     & 0    & 2        & 4        & 6        & 8        & 10       & $\alpha$\\
\hline
\hline
exact                     & 2.25 & 2.65e-02 & 3.13e-04 & 3.69e-06 & 4.35e-08 & 5.12e-10 & 2.139\\
\hline
$\theta=1$                & 2.25 & 2.66e-02 & 3.14e-04 & 3.71e-06 & 4.39e-08 & 5.18e-10 & 2.138\\
\hline
$\theta=\frac{1}{2}+\tau$ & 2.25 & 2.65e-02 & 3.13e-04 & 3.69e-06 & 4.34e-08 & 5.12e-10 & 2.139\\
\end{tabular}
\smallskip
\caption{Decay of the exact energy $E^0(t^n)$ and the corresponding energies of the semi-discrete 
and fully discrete approximations obtained with the mixed finite element approximation combined with the implicit Euler method ($\theta=1$) and the second order scheme ($\theta = \frac{1}{2}+\tau$), respectively. 
}
\label{table:expdecay}
\end{center}
\end{table}

As predicted by our theoretical results, all energies decrease exponentially and approximately at the same rates.

\subsection{Non-uniform exponential stability of the Crank-Nicolson method}

As mentioned in Remark~\ref{rem:adaptive1}, 
the unconditional and uniform exponential stability for the fully discrete scheme can be guaranteed also 
for the choice $\theta=\frac{1}{2}+\lambda \tau$ with $\lambda$ sufficiently large, which yields a second order approximation in time.
For $\theta=1/2$, one obtains the Crank-Nicolson method, which is also formally second order accurate in time.
We will now demonstrate, that the uniform exponential stability is however lost without further restrictions on the size of the time step. 

As before, we set $a=10$.  
As initial values, we now choose $u_0=0$ and $p_0$ as the hat function on $[0,1]$,
which ensures that components of all spatial frequencies are present in the solution.
We then compute the numerical solutions with the mixed finite element approximation and 
the $\theta$-scheme with $\theta=\frac{1}{2}$ as well as $\theta=\frac{1}{2}+\tau$. 
For our tests, we use a fixed time step $\tau=10^{-2}$ and different mesh sizes $h=2^{-k}$ for some values of $k \ge 1$. 
The evolution of the discrete energies $E_h^n$ is depicted in Figure~\ref{fig:countercrank}.
\begin{figure}[ht!]
\begin{center}
\includegraphics[width=0.6\textwidth]{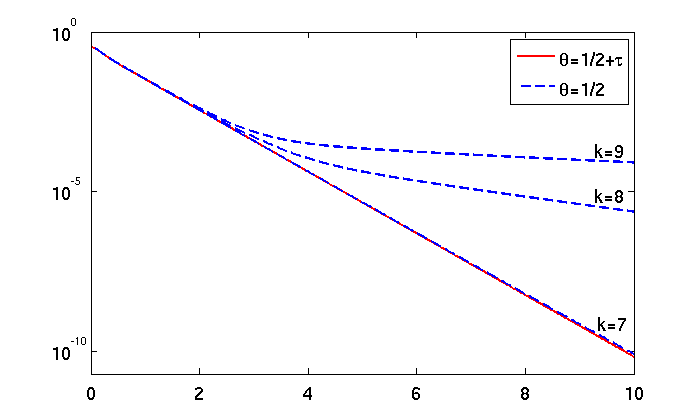}
\caption{Evolution of the discrete energies $E_h^n$ for the fully discrete solutions with $\theta=\frac{1}{2}$ (Crank-Nicolson, blue) and $\theta=\frac{1}{2}+\tau$ (second order, red) for fixed time step $\tau=10^{-2}$ and mesh size 
$h=2^{-k}$ with $k=7,8,9$.}
\label{fig:countercrank}
\end{center}
\end{figure}
As can be seen from the plots, the exponential stability of the Crank-Nicolson energy is lost when $h$ becomes 
much smaller than $\tau$ while the decay of the second order scheme with $\theta=\frac{1}{2}+\tau$ remains uniform. 
Let us remark at this point that the uniform exponential stability could be maintained also for the Crank-Nicolson scheme under a condition $\tau \le c h$ on the time step, see \cite{ErvedozaZuazua09} for results in this direction, and even for 
explicit Runge-Kutta methods with sufficiently small time steps.

\subsection{Asymptotic behavior of the decay rate}

Our theoretical results allow us to make some predictions about the dependence of the decay rate on 
the upper and lower bounds $a_0,a_1$ for the parameter $a$.
As indicated in Remark~\ref{rem:asymptrate}, one has $\alpha \ge \min\{c' a_0,c''/a_1\}$ 
with appropriate constants $c'$ and $c''$ only depending on the ratio $a_1/a_0$.  
For $a \equiv const$, even an analytic expression 
\begin{align*}
\alpha=g(a):= a/2-\text{Re}\sqrt{a^2/4-\pi^2}.
\end{align*} 
for the decay rate can be computed; see for instance \cite{CoxZuazua94}. 

We now illustrate that the correct behaviour of the decay rate is reproduced be the semi-discretization and 
the full discretization proposed in this paper. 
To do so, we compute the norms of $S_h(t^n) : (u_h(0),p_h(0)) \mapsto (u_h(t^n),p_h(t^n))$ and $S_h^\tau(t^n) : (u_h(0),p_h(0)) \mapsto (u_h^n,p_h^n)$ governing the semi-discrete and discrete evolutions, respectively. 
In our tests, we set $t^n=10$ and compute $\|S_h(t^n)\|$ for $h=10^{-1},10^{-2}$ and $\|S_h^\tau(t^n)\|$ with 
$h=\tau=1/20$ using the second order scheme with $\theta=\frac{1}{2}+\tau$. 
The damping parameter is chosen from the set $a=2^{-5},...,2^{10}$. 
The results of our numerical tests are depicted in Figure~\ref{fig:arate}.

\begin{figure}[ht!]
\begin{center}
\includegraphics[width=0.7\textwidth]{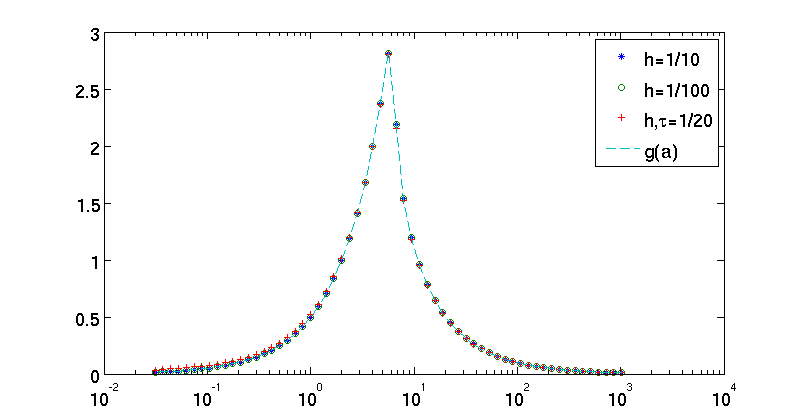}
\caption{Exponential decay rates for semi-discrete and fully discrete evolution operators $\|S_h(10)\|$ with $h=1/10$, $h=1/100$ and $S_h^\tau(10)$ with $h=\tau=1/20$ for damping parameters $a=2^{-k}$ with $k=-5,\ldots,10$.}
\label{fig:arate}
\end{center}
\end{figure}

Note that already for the coarse discretization, the numerically observed decay 
rates are in almost perfect agreement with the analytical formula, over a very 
large range of parameters $a$, which illustrate the robustness of our 
results with respect to the damping parameter. 

\subsection{Convergence rates}
Let us finally also report on the convergence of the discretization errors 
with respect to the mesh size $h$ and the time step $\tau$. 
As analytical solution, we choose the one given in \eqref{eq:solu}--\eqref{eq:solp} and we set $a=10$ 
as before. The discrete approximations $(u_h^n,p_h^n)$ are computed with the mixed finite element method with $P_1$-$P_0$ elements and the $\theta$-scheme with $\theta=\frac{1}{2}+\tau$ and $\theta=1$. 
As a measure for the discretization error, we choose the discrete error component
\begin{align*}
e_h^\tau = \big( \|u_h^n - \pi_h u(t^n)\|^2 + \|p_h^n - \rho_h p(t^n)\|^2 \big)^{1/2}.
\end{align*}
By a careful inspection of the proofs, one can see that the discrete error 
exhibits super-convergence with respect the spatial discretization. 
We thus expect $e_h^n = O(h^2 + \tau^p)$ with order $p=1$ for the implicit Euler method and $p=2$ for the second order scheme.

\begin{table}[ht!]
\begin{center}
\small
\begin{tabular}{l||c|c||c|c}
$h$     & $\theta=1$ & \text{rate}  & $\theta=\frac{1}{2}+\tau$ & \text{rate} \\ 
\hline      
\hline      
0.5     & 0.13747 & ---  &  0.13748 & ---  \\
0.25    & 0.03569 & 1.95 &  0.03570 & 1.95 \\
0.125   & 0.00894 & 2.00 &  0.00895 & 2.00 \\
0.0625  & 0.00223 & 2.00 &  0.00224 & 2.00 
\end{tabular}
\hspace*{2em}
%
%
\begin{tabular}{l||c|c||c|c}
$\tau$  & $\theta=1$ & \text{rate}  & $\theta=\frac{1}{2}+\tau$ & \text{rate}\\ 
\hline      
\hline      
0.5     & 0.17830 & ---  &  0.17830 & ---  \\
0.25    & 0.09741 & 0.87 &  0.04782 & 1.90 \\
0.125   & 0.05113 & 0.93 &  0.01216 & 1.98 \\
0.0625  & 0.02623 & 0.96 &  0.00305 & 1.99 
\end{tabular}
\bigskip
\caption{Convergence of the discrete error $e_h^\tau$ with respect to the mesh size $h$ (left, $\tau=10^{-5}$) and the time step $\tau$ (right, $h=10^{-4}$) for the mixed finite element approximation and $\theta$-scheme with $\theta=1$ and $\theta=1/2+\tau$.}
\label{table:convrate}
\end{center}
\end{table}

The results obtained in our numerical tests are displayed in Table \ref{table:convrate}.
Again they perfectly match the theoretical predictions already at very coarse discretization levels.

\section{Discussion}

In this paper, we considered the systematic numerical approximation of a damped wave system by 
Galerkin semi-discretization in space and time discretization by certain one-step methods. 
We derived energy decay estimates on the continuous level and showed that these remain valid 
uniformly for the semi-discretizations and fully discrete approximations under general 
assumptions on the approximation spaces and the parameter $\theta$ used for the time discretization.
Moreover, the estimates are unconditional, i.e., the time step $\tau$ can be chosen independently of the 
discretization spaces. 

While we only considered here a one-dimensional model problem, our results and methods of proof can 
in principle also be generalized to multi-dimensional problems and other applications having similar 
structure. Also non-linearities can be tackled to some point; we refer to \cite{ErvedozaZuazua09,Joly03} 
for some general analysis in this direction. 

\section*{Acknowledgements}
The authors would like to gratefully acknowledge the support by the German Research Foundation (DFG) via grants IRTG~1529, GSC~233, and TRR~154.

\appendix 

\renewcommand\thesection{A}
\renewcommand\thelemma{A.\arabic{lemma}}

\setcounter{lemma}{0}

\section*{Appendix}

\subsection{Auxilliary results}

We start with proving a generalized Poincar\'e inequality.

\begin{lemma} \label{lem:a1}
Let $a \in L^2(0,1)$ and $\bar a = \int a dx \ne 0$.
Then for any $u \in H^1(0,1)$ we have
\begin{align*}
\|u\|_{L^2(0,1)} \le  \frac{1}{\pi}\big( 1 + \frac{1}{\bar a} \|a-\bar a\|_{L^2(0,1)}  \big) \ \|\dx u\|_{L^2(0,1)} + \frac{1}{\bar a} \big| \int_0^1 a u \; dx\big|,
\end{align*} 
\end{lemma}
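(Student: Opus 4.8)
The plan is to reduce the weighted estimate to the classical Poincar\'e--Wirtinger inequality on the unit interval by splitting $u$ into its mean value and a fluctuation of zero mean. Write $\bar u = \int_0^1 u\,dx$ for the mean of $u$ (the domain has length one, so mean and integral coincide). The sharp Poincar\'e--Wirtinger inequality then gives $\|u-\bar u\| \le \frac{1}{\pi}\|\dx u\|$, the constant $\frac{1}{\pi}$ being dictated by the first nonzero Neumann eigenvalue $\pi^2$ of $-\dxx$ on $(0,1)$. Since $\bar u$ is constant and the domain has unit length, $\|\bar u\| = |\bar u|$, and the triangle inequality yields $\|u\| \le \frac{1}{\pi}\|\dx u\| + |\bar u|$. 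It thus remains to control $|\bar u|$ in terms of the weighted integral $\int_0^1 a u\,dx$ and the gradient.

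To this end I would decompose both factors in the weighted integral. Inserting $a = \bar a + (a-\bar a)$ and $u = \bar u + (u - \bar u)$ and using that $\int_0^1 (a-\bar a)\,dx = 0$ and $\int_0^1 (u-\bar u)\,dx = 0$, the two cross terms vanish and one obtains the identity
\begin{align*}
\int_0^1 a u \, dx = \bar a\,\bar u + \int_0^1 (a-\bar a)(u-\bar u)\,dx.
\end{align*}
Solving for $\bar u$ and dividing by $\bar a \ne 0$ gives $\bar u = \frac{1}{\bar a}\big(\int_0^1 a u\,dx - \int_0^1 (a-\bar a)(u-\bar u)\,dx\big)$. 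The correction term is estimated by Cauchy--Schwarz together with the Poincar\'e--Wirtinger bound, namely $|\int_0^1 (a-\bar a)(u-\bar u)\,dx| \le \|a-\bar a\|\,\|u-\bar u\| \le \frac{1}{\pi}\|a-\bar a\|\,\|\dx u\|$.

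Combining these pieces, $|\bar u| \le \frac{1}{\bar a}\big|\int_0^1 a u\,dx\big| + \frac{1}{\pi\bar a}\|a-\bar a\|\,\|\dx u\|$, and substituting back into $\|u\| \le \frac{1}{\pi}\|\dx u\| + |\bar u|$ produces exactly the claimed inequality after grouping the two gradient terms. (If $\bar a$ may be negative one replaces $\frac{1}{\bar a}$ by $\frac{1}{|\bar a|}$ throughout; in the intended application $a$ is uniformly positive, so $\bar a>0$.) There is no genuine obstacle here: every step is elementary once the decomposition is set up. The only non-trivial input is the \emph{sharp} Poincar\'e--Wirtinger constant $\frac{1}{\pi}$, which is what makes the final constant explicit; I would simply cite or recall the eigenvalue computation for $-\dxx$ with Neumann conditions on $(0,1)$ rather than reprove it.
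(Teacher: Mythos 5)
Your proof is correct and follows essentially the same route as the paper's: both split $u$ into mean plus fluctuation, apply the sharp Poincar\'e--Wirtinger inequality, and exploit that $a-\bar a$ has zero mean to express $\bar u$ through $\int_0^1 au\,dx$ plus a correction term bounded by Cauchy--Schwarz. Your identity $\int_0^1 au\,dx = \bar a\,\bar u + \int_0^1(a-\bar a)(u-\bar u)\,dx$ is just a rearrangement of the paper's computation of $\bar u$, so the two arguments coincide step for step.
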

\begin{proof}
We denote by $\bar u = \int_0^1 u$ the average of $u$ and by $\|\cdot\|$ the norm of $L^2(0,1)$. 
Then 
\begin{align*} 
\|u\| \le \|u-\bar u\| + \|\bar u\| \le \frac{1}{\pi} \|\dx u\| + \|\bar u\|,
\end{align*}
where we used the standard Poincar\'e inequality.
To bound the last term, observe that 
\begin{align*}
\bar u 
&= \frac{1}{\bar a} \int_0^1 \bar a u \; dx
= \frac{1}{\bar a} \int_0^1 (\bar a -a) \; u + a u \; dx 
= \frac{1}{\bar a} \int_0^1 (\bar a -a) \; (u-\bar u) + a u \; dx.
\end{align*}
Application of the triangle, Cauchy Schwarz inequalities, and Poincar\'e inequality yields
\begin{align*}
\|\bar u\| 
&\le \frac{1}{\bar a} \|\bar a - a\| \|u - \bar u\| + \frac{1}{\bar a} |\int_0^1 a u \; dx| 
\le \frac{\|\bar a - a\|}{\bar a \pi} \|\dx u\| + \frac{1}{\bar a} |\int_0^1 a u \; dx|.
\end{align*}
The assertion of the lemma now follows by combination of the two estimates.
\end{proof}

An application of this lemma to solutions of the damped wave system yields the following estimate which will be used several times below.
\begin{lemma} \label{lem:a2}
Let $(u,p)$ be a classical solution of \eqref{eq:instat1}--\eqref{eq:instat3} 
and let $0 < a_0 \le a(x) \le a_1$. Then 
\begin{align*}
\|u(t)\|_{L^2(0,1)} \le \frac{a_1}{a_0} \|\dt p(t)\|_{L^2(0,1)} + \frac{1}{a_0} \big| \|\dt u(t)\|.
\end{align*}
\end{lemma}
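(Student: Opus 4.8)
The plan is to apply the generalized Poincar\'e inequality of Lemma~\ref{lem:a1} directly to the velocity component $u(t)$, using the damping coefficient $a$ as the weight. First I would set $\bar a = \int_0^1 a\,dx$ and note that, since $0 < a_0 \le a(x) \le a_1$, we have $a_0 \le \bar a \le a_1$, so $\bar a \ne 0$ and the hypotheses of Lemma~\ref{lem:a1} are met. Applying that lemma to $u = u(t)$ gives
\begin{align*}
\|u(t)\| \le \frac{1}{\pi}\Big(1 + \frac{1}{\bar a}\|a - \bar a\|\Big)\|\dx u(t)\| + \frac{1}{\bar a}\Big|\int_0^1 a\,u(t)\,dx\Big|.
\end{align*}

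The key observation is that both quantities on the right can be re-expressed using the differential equations. For the last term, equation~\eqref{eq:instat1} gives $a u = -\dt u - \dx p$; testing against the constant function and using the boundary condition $p=0$ from \eqref{eq:instat3}, the term $\int_0^1 \dx p\,dx = p(1)-p(0) = 0$ vanishes, so $\int_0^1 a\,u(t)\,dx = -\int_0^1 \dt u(t)\,dx$, whose absolute value is bounded by $\|\dt u(t)\|$ via Cauchy--Schwarz. For the gradient term, equation~\eqref{eq:instat2} reads $\dx u = -\dt p$, so $\|\dx u(t)\| = \|\dt p(t)\|$ exactly. Substituting these two identities converts the right-hand side entirely into the time-derivative norms $\|\dt p(t)\|$ and $\|\dt u(t)\|$ appearing in the claim.

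It then remains to check that the resulting constants are dominated by $a_1/a_0$ and $1/a_0$, respectively. The coefficient of $\|\dt u(t)\|$ is $1/\bar a \le 1/a_0$, which matches. The coefficient of $\|\dt p(t)\|$ is $\frac{1}{\pi}(1 + \frac{1}{\bar a}\|a-\bar a\|)$; here one bounds $\|a - \bar a\|_{L^2(0,1)} \le a_1 - a_0 \le a_1$ crudely (the oscillation of $a$ is at most $a_1$ on the unit interval), so that this coefficient is at most $\frac{1}{\pi}(1 + a_1/a_0)$, which is comfortably below $a_1/a_0$ given the factor $1/\pi < 1$ and $a_1 \ge a_0$.

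The main obstacle is bookkeeping rather than conceptual: one must track the weight-oscillation term $\|a - \bar a\|$ and confirm it can be absorbed into the stated constant $a_1/a_0$, and one must verify carefully that the boundary contribution in the integration giving $\int_0^1 \dx p\,dx = 0$ genuinely vanishes under the Dirichlet condition \eqref{eq:instat3}. Since $(u,p)$ is assumed to be a classical solution, all the pointwise manipulations of the equations are justified, so no regularity difficulties arise. I expect this to be a short argument, with the only subtlety being the choice of a sufficiently generous constant so that the somewhat wasteful estimate $\|a - \bar a\| \le a_1$ still fits under $a_1/a_0$.
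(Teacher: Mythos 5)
Your proposal is correct and takes essentially the same route as the paper's proof: both apply Lemma~\ref{lem:a1} with weight $a$, replace $\|\dx u(t)\|$ by $\|\dt p(t)\|$ via \eqref{eq:instat2}, and bound $\big|\int_0^1 a u\,dx\big|$ by $\|\dt u(t)\|$ using \eqref{eq:instat1} together with the pressure boundary condition (the paper tests the variational formulation with the constant function $1$, which for classical solutions is exactly your integration of the strong equation). Your constant bookkeeping, absorbing $\|a-\bar a\| \le a_1 - a_0$ and the factor $1/\pi$ into $a_1/a_0$ and $1/\bar a \le 1/a_0$, likewise matches what the paper does implicitly.
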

\begin{proof}
Using the bounds for the parameter, we obtain from the previous lemma that
\begin{align*}
\|u\|_{L^2(0,1)} \le  \frac{a_1}{a_0} \|\dx u\|_{L^2(0,1)} + \frac{1}{a_0} \big| \int_0^1 a u \; dx\big|.
\end{align*}
Note that this estimate holds for any function $u \in H^1(0,1)$. 
Using the mixed variational characterization of the solution, we further obtain
\begin{align*}
|\int_0^1 a u \; dx| = |(au, 1)| = |-(\dt u, 1) + (p, \dx 1)| = |(\dt u, 1)| \le \|\dt u\|.
\end{align*}
Note that the boundary condition on the pressure was used implicitly here.
\end{proof}

\subsection{Proof of the Theorem~\ref{thm:ee2} for $k = 1$} \label{sec:est2}

To establish the decay estimate for the energy $\E^1(t) = \frac{1}{2} \big(\|\dt u(t)\|^2 + \|\dt p(t)\|^2 \big)$, let us define the modified energy 
$$
\E_\eps^1(t) = \E^1(t) + \eps (u_t(t),u(t)).
$$
We assume that $(u,p)$ is a classical solution of \eqref{eq:instat1}--\eqref{eq:instat3}, such that the energies are finite. 
As a first step, we will show now that for appropriate choice of $\eps$, the two energies $\E^1$ and $\E^1_\eps$ are equivalent.
\begin{lemma} \label{lem:a3}
Let $|\eps| \le \frac{a_0}{ 4 +2 a_1}$. Then 
\begin{align*} 
\frac{1}{2} \E^1(t) \le \E^1_\eps(t) \le \frac{3}{2} \E^1(t). 
\end{align*}
\end{lemma}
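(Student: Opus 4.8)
The plan is to reduce the claimed two-sided inequality to a single bound on the cross term. By definition $\E^1_\eps(t) = \E^1(t) + \eps(\dt u(t), u(t))$, so the difference $\E^1_\eps(t) - \E^1(t)$ equals $\eps(\dt u(t), u(t))$. Hence the assertion is equivalent to showing that $|\eps(\dt u(t), u(t))| \le \frac{1}{2}\E^1(t)$: this immediately gives $\E^1 - \frac{1}{2}\E^1 \le \E^1_\eps \le \E^1 + \frac{1}{2}\E^1$, which is exactly the stated equivalence. So the entire proof comes down to estimating the inner product $(\dt u, u)$ against the energy.

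First I would apply the Cauchy--Schwarz inequality to obtain $|(\dt u(t), u(t))| \le \|\dt u(t)\|\,\|u(t)\|$. The factor $\|\dt u(t)\|$ is already an energy quantity, but $\|u(t)\|$ is not, so the crucial step is to re-express $\|u(t)\|$ through time derivatives. This is precisely what Lemma~\ref{lem:a2} furnishes: for a classical solution one has $\|u(t)\| \le \frac{a_1}{a_0}\|\dt p(t)\| + \frac{1}{a_0}\|\dt u(t)\|$. Substituting this bound yields $|(\dt u, u)| \le \frac{a_1}{a_0}\|\dt u\|\,\|\dt p\| + \frac{1}{a_0}\|\dt u\|^2$.

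Next I would absorb both terms into the energy. Young's inequality gives $\|\dt u\|\,\|\dt p\| \le \frac{1}{2}(\|\dt u\|^2 + \|\dt p\|^2) = \E^1$, while trivially $\|\dt u\|^2 \le \|\dt u\|^2 + \|\dt p\|^2 = 2\E^1$. Combining these produces the clean estimate $|(\dt u, u)| \le \big(\frac{a_1}{a_0} + \frac{2}{a_0}\big)\E^1 = \frac{a_1+2}{a_0}\E^1$. Multiplying by $|\eps|$ and inserting the hypothesis $|\eps| \le \frac{a_0}{4+2a_1}$ gives $|\eps(\dt u, u)| \le \frac{a_0}{4+2a_1}\cdot\frac{a_1+2}{a_0}\,\E^1 = \frac{1}{2}\E^1$, which closes the argument.

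The only delicate point is the bookkeeping of constants so that the admissible range of $\eps$ comes out exactly as $\frac{a_0}{4+2a_1}$; the genuine mathematical content is carried entirely by Lemma~\ref{lem:a2}, which converts $\|u\|$ into energy terms and without which the cross term could not be controlled by $\E^1$. No regularity beyond that already assumed for Lemma~\ref{lem:a2} is needed, so the argument applies to all classical solutions.
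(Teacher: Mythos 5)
Your proof is correct and follows essentially the same route as the paper: Cauchy--Schwarz on the cross term, Lemma~\ref{lem:a2} to convert $\|u(t)\|$ into $\|\dt u(t)\|$ and $\|\dt p(t)\|$, and Young's inequality, yielding the same bound $|(\dt u(t),u(t))| \le \frac{2+a_1}{a_0}\E^1(t)$ and hence $|\eps(\dt u(t),u(t))| \le \frac{1}{2}\E^1(t)$ under the stated restriction on $\eps$. The only difference is cosmetic bookkeeping in how the two terms are absorbed into $\E^1(t)$.
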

\begin{proof}
We only have to estimate the additional term in the modified energy.
By the Cauchy-Schwarz inequality and the estimate of Lemma~\ref{lem:a2}, we get
\begin{align*}
(\dt u(t),u)  
&\le \|\dt u(t)\| \|u(t)\| 
\le \frac{1}{a_0} \|\dt u(t)\|^2 + \frac{a_1}{a_0} \|\dt u(t)\| \|\dt p(t)\|.
\end{align*}
Using Young's inequality to bound the last term yields
\begin{align*}
|(\dt u(t),u(t))| 
\le \frac{2+a_1}{2a_0} \|\dt u(t)\|^2 + \frac{a_1}{2a_0} \|\dt p(t)\|^2 
\le \frac{2+a_1}{2a_0} \big(\|\dt u(t)\|^2 + \|\dt p(t)\|^2 \big).
\end{align*}
The bound on $\eps$ and the definition of $\E^1(t)$ 
further yields  $|\eps (\dt u(t), u(t))| \le \frac{1}{2} \E^1(t)$,
from which the assertion of the lemma follows via the triangle inequality.
\end{proof}

We can now establish the exponential decay for the modified energy.
\begin{lemma} \label{lem:a4}
Let $0 \le \eps \le \frac{2a_0^3}{8 a_0^2+ 4 a_0^2 a_1 + 2 a_0 a_1  +a_1^4}$. 
Then 
\begin{align*}
\E^1_\eps(t)  \le e^{-2 \eps/3 (t-s)} \E^1_\eps(s).
\end{align*}
\end{lemma}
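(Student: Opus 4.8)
The plan is to differentiate the modified energy $\E^1_\eps(t)$ and show that its derivative is bounded above by $-\tfrac{2}{3}\eps\,\E^1_\eps(t)$, after which the stated exponential decay follows by the differential form of Gronwall's inequality. First I would compute $\frac{d}{dt}\E^1_\eps(t)$ term by term. The genuine energy contributes $\frac{d}{dt}\E^1(t) = -(a\,\dt u,\dt u) \le -a_0\|\dt u\|^2$, which is exactly the dissipation identity of Lemma~\ref{lem:ee1} applied to the time-differentiated solution $(\dt u,\dt p)$. The delicate contribution comes from the cross term $\eps\frac{d}{dt}(\dt u,u) = \eps\|\dt u\|^2 + \eps(\dtt u,u)$. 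To handle $(\dtt u,u)$, I would differentiate \eqref{eq:instat1} in time to get $\dtt u = -\dx\dt p - a\,\dt u$, so that $(\dtt u,u) = -(\dx\dt p,u) - (a\,\dt u,u)$. Integrating by parts in the first term (using the boundary condition \eqref{eq:instat3} on $p$, hence on $\dt p$) converts $-(\dx\dt p,u)$ into $(\dt p,\dx u)$, and \eqref{eq:instat2} gives $\dx u = -\dt p$, so this becomes $-\|\dt p\|^2$. The upshot is
\begin{align*}
\frac{d}{dt}\E^1_\eps(t) = -(a\,\dt u,\dt u) + \eps\|\dt u\|^2 - \eps\|\dt p\|^2 - \eps(a\,\dt u,u).
\end{align*}

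The next step is to absorb the indefinite terms $\eps\|\dt u\|^2$ and $-\eps(a\,\dt u,u)$ into the good negative terms $-a_0\|\dt u\|^2$ and $-\eps\|\dt p\|^2$. The term $-\eps(a\,\dt u,u)$ is estimated by Cauchy--Schwarz together with Lemma~\ref{lem:a2}, which bounds $\|u\|$ by $\tfrac{a_1}{a_0}\|\dt p\| + \tfrac{1}{a_0}\|\dt u\|$; after applying Young's inequality this produces controlled multiples of $\|\dt u\|^2$ and $\|\dt p\|^2$. The whole point of the smallness constraint $\eps \le 2a_0^3/(8a_0^2+4a_0^2a_1+2a_0a_1+a_1^4)$ is to ensure that, after these estimates, the coefficient of $\|\dt u\|^2$ stays below zero (with enough margin) and the coefficient of $\|\dt p\|^2$ is at most $-\tfrac{\eps}{3}$, so that one obtains
\begin{align*}
\frac{d}{dt}\E^1_\eps(t) \le -\tfrac{\eps}{3}\big(\|\dt u\|^2+\|\dt p\|^2\big) = -\tfrac{2\eps}{3}\,\E^1(t).
\end{align*}

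The final step is to convert the bound in terms of $\E^1$ into one in terms of $\E^1_\eps$. Here I would invoke the equivalence $\E^1_\eps(t)\le \tfrac{3}{2}\E^1(t)$ from Lemma~\ref{lem:a3} (whose hypothesis $\eps\le a_0/(4+2a_1)$ must be checked to follow from the present, stronger bound on $\eps$), which gives $\E^1(t)\ge \tfrac{2}{3}\E^1_\eps(t)$ and hence $\frac{d}{dt}\E^1_\eps(t)\le -\tfrac{2\eps}{3}\cdot\tfrac{2}{3}\,\cdots$; more carefully, combining $-\tfrac{2\eps}{3}\E^1(t) \le -\tfrac{2\eps}{3}\E^1_\eps(t)$ via the inequality $\E^1_\eps\le\tfrac32\E^1$ yields exactly $\frac{d}{dt}\E^1_\eps(t)\le -\tfrac{2\eps}{3}\E^1_\eps(t)$. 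Gronwall's lemma then gives the claimed estimate $\E^1_\eps(t)\le e^{-2\eps(t-s)/3}\E^1_\eps(s)$.

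I expect the main obstacle to be the bookkeeping in the second step: one must track the exact constants arising from each Young's inequality splitting so that the threshold on $\eps$ comes out precisely as stated, rather than merely qualitatively small. In particular, choosing the Young's inequality weights suboptimally would yield a weaker admissible range for $\eps$ and hence a worse decay rate, so the polynomial $8a_0^2+4a_0^2a_1+2a_0a_1+a_1^4$ in the denominator dictates the specific splitting that must be used. Verifying that this $\eps$-bound is compatible with (indeed implies) the hypothesis of Lemma~\ref{lem:a3} is a routine but necessary consistency check.
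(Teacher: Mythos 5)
Your overall strategy coincides with the paper's: differentiate $\E^1_\eps$, use the dissipation identity plus the identity $(\dtt u,u) = -\|\dt p\|^2 - (a\,\dt u,u)$ (which you obtain in strong form and the paper obtains from the variational formulation---equivalent for classical solutions), absorb the bad terms via Lemma~\ref{lem:a2} and Young's inequality, and close with Lemma~\ref{lem:a3} and Gronwall. However, your final conversion step contains a genuine error. From your intermediate bound $\frac{d}{dt}\E^1_\eps(t) \le -\frac{2\eps}{3}\E^1(t)$ you assert that ``$-\frac{2\eps}{3}\E^1(t) \le -\frac{2\eps}{3}\E^1_\eps(t)$'' follows from $\E^1_\eps \le \frac{3}{2}\E^1$. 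It does not: that equivalence only gives $\E^1 \ge \frac{2}{3}\E^1_\eps$, hence $-\frac{2\eps}{3}\E^1 \le -\frac{4\eps}{9}\E^1_\eps$. The inequality you wrote would require $\E^1 \ge \E^1_\eps$, which Lemma~\ref{lem:a3} does not provide and which fails whenever $(\dt u(t),u(t))>0$. As it stands, your argument proves only $\E^1_\eps(t) \le e^{-4\eps(t-s)/9}\,\E^1_\eps(s)$, not the claimed rate $2\eps/3$.

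The repair is to strengthen the intermediate target, and the stated bound on $\eps$ is strong enough to do so: aim for coefficients $-\frac{\eps}{2}$ (not $-\frac{\eps}{3}$) on both $\|\dt u\|^2$ and $\|\dt p\|^2$. Concretely, Young's inequality gives $(\dtt u,u) \le -\frac{1}{2}\|\dt p\|^2 + \big(\frac{a_1}{a_0}+\frac{a_1^4}{2a_0^2}\big)\|\dt u\|^2$, so that $\frac{d}{dt}\E^1_\eps \le -\big(a_0-\eps\big(1+\frac{a_1}{a_0}+\frac{a_1^4}{2a_0^2}\big)\big)\|\dt u\|^2 - \frac{\eps}{2}\|\dt p\|^2$; the condition $\eps \le 2a_0^3/(3a_0^2+2a_0a_1+a_1^4)$, which is implied by the hypothesis since the stated denominator exceeds this one by $5a_0^2+4a_0^2a_1$, makes the first coefficient $\le -\frac{\eps}{2}$ as well. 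This yields $\frac{d}{dt}\E^1_\eps \le -\eps\,\E^1$, and only now does Lemma~\ref{lem:a3} (whose hypothesis $\eps \le a_0/(4+2a_1)$ indeed follows from the stated bound, as you correctly flag) give exactly $-\eps\,\E^1 \le -\frac{2\eps}{3}\E^1_\eps$, i.e., the claimed rate. Note also that the extra $5a_0^2+4a_0^2a_1$ in the denominator is there precisely to guarantee the hypothesis of Lemma~\ref{lem:a3}, not to tune the Young splittings as you surmise. Finally, your differentiation of $(\dt u,u)$ presupposes extra smoothness; as in the paper, the general case should be obtained by approximating with smooth solutions and passing to the limit by continuity.
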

\begin{proof}
To avoid technicalities, let us assume that the solution is sufficiently smooth
first, such that all manipulations are well-defined.
By the definition of the modified energy and the energy identity given in Lemma~\ref{lem:ee1}, we have
\begin{align*}
\frac{d}{dt} \E^1_\eps(t) 
&= \frac{d}{dt} \E^1(t) + \eps \frac{d}{dt}  (\dt u(t),u(t)) \\
&\le - a_0 \|\dt u(t)\|^2 + \eps \frac{d}{dt}  (\dt u(t),u(t)).
\end{align*}
The last term can be expanded as
\begin{align} \label{eq:split}
\eps \frac{d}{dt}  (\dt u(t),u(t))
&= \eps \|\dt u(t)\|^2 + \eps (\dtt u(t), u(t)). 
\end{align}
Using the fact that $(u,p)$ as well as $(\dt u,\dt p)$ solve the variational principle, we can estimate the last term by
\begin{align*}
(\dtt u(t), u(t))
&= (\dt p(t), \dx u(t)) - (a \dt u(t), u(t))\\
&= -(\dt p(t), \dt p(t)) - (a \dt u(t), u(t))\\ 
&\le -\|\dt p(t)\|^2 + a_1 \|\dt u(t)\| \|u(t)\|.
\end{align*}
Using Lemma~\ref{lem:a2} to bound $\|u(t)\|$ and Young's inequality, we further get
\begin{align*}
(\dtt u(t), u(t)) 
&\le -\|\dt p(t)\|^2 + \frac{a_1}{a_0} \|\dt u(t)\|^2 + \frac{a_1^2}{a_0} \|\dt u(t)\| \|\dt p(t)\| \\
&\le -\frac{1}{2} \|\dt p(t)\|^2 + \big( \frac{a_1}{a_0} + \frac{a_1^4}{2a_0^2} \big) \|\dt u(t)\|^2.
\end{align*}
Inserting this estimate in \eqref{eq:split} then yields
\begin{align*}
\frac{d}{dt} \E^1_\eps(t) 
\le -\big(a_0 - \eps (1+\frac{a_1}{a_0} + \frac{a_1^4}{2 a_0^2})\big) \|\dt u(t)\|^2  
- \frac{\eps}{2} \|\dt p(t)\|^2.
\end{align*}
The two factors are balanced by the choice 
$\eps = \frac{2 a_0^3}{3 a_0^2 + 2 a_0 a_1 + a_1^4}$. In order to satisfy 
also the condition of the Lemma~\ref{lem:a3}, we enlarge the denominator 
by $5a_0^2 + 4 a_0^2a_1$, which yields the expression for $\eps$ stated in the lemma. 
In summary, we thus obtain
\begin{align*}
\frac{d}{dt} \E^1_\eps(t) \le -\eps \E^1(t) \le -\eps \frac{2}{3} \E^1_\eps(t).
\end{align*}
The result for smooth solutions now follows by integration. The general case is obtained 
by smooth approximation and continuity similar as in the proof of Lemma~\ref{lem:ee1}.
\end{proof}

\noindent
Combination of the previous estimates yields the assertion of Theorem~\ref{thm:ee2} for $k=1$.

%

\subsection{Proof of Theorem~\ref{thm:ee2} for $k=0$ and $k \ge 2$} \label{sec:a3}

We will first show how the estimate for $k=0$ can be deduced from that for $k=1$.
Let $u_0,p_0 \in L^2(0,1)$ be given and consider the following stationary problem
\begin{align*}
\dx \bar p + a \bar u &= u_0, \qquad \text{in } (0,1),  \\
\dx \bar u &= p_0, \qquad \text{in } (0,1),             
\end{align*}
with boundary condition $\bar p=0$ on $\{0,1\}$. 
Using Lemma~\ref{lem:stat}, we readily obtain 
\begin{lemma}
Let $0 < a_0 \le a(x) \le a_1$. 
Then there exists a unique strong solution $(\bar u,\bar p) \in H^1(0,1) \times H_0^1(0,1)$
and $\|\bar u\|_{H^1(0,1)} + \|\bar p\|_{H^1(0,1)} \le C \big( \|u_0\| + \|p_0\| \big)$.
\end{lemma}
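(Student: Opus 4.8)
The plan is to recognize the stationary system at hand as a direct specialization of the general stationary problem \eqref{eq:stat1}--\eqref{eq:stat3} already analyzed in Lemma~\ref{lem:stat}. Indeed, the system $\dx \bar p + a \bar u = u_0$, $\dx \bar u = p_0$ with the boundary condition $\bar p = 0$ on $\{0,1\}$ is precisely \eqref{eq:stat1}--\eqref{eq:stat3} under the identifications $\bar f = u_0$, $\bar g = p_0$, and $\bar h = 0$. The entire assertion should therefore follow at once by invoking that earlier lemma.

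First I would verify that the hypotheses of Lemma~\ref{lem:stat} are met: the damping satisfies $0 < a_0 \le a(x) \le a_1$ by assumption, the given data $u_0, p_0$ lie in $L^2(0,1)$ so that $\bar f, \bar g \in L^2(0,1)$ as required, and the boundary datum $\bar h = 0 \in \RR^2$ is trivially admissible. Lemma~\ref{lem:stat} then applies verbatim and produces a unique strong solution $(\bar u, \bar p) \in H^1(0,1) \times H_0^1(0,1)$ together with the bound $\|\bar u\|_1 + \|\bar p\|_1 \le C(\|\bar f\| + \|\bar g\| + |\bar h|)$. Substituting the identifications and using $|\bar h| = 0$ turns this into $\|\bar u\|_1 + \|\bar p\|_1 \le C(\|u_0\| + \|p_0\|)$, which is exactly the claimed estimate; since no boundary term contributes, the constant $C$ still depends only on $a_0$ and $a_1$.

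Should a self-contained argument be preferred, one can reproduce the mechanism behind Lemma~\ref{lem:stat}: solve the first equation for $\bar u = (u_0 - \dx \bar p)/a$, insert this into the second equation to obtain a second-order elliptic problem for $\bar p$ with homogeneous Dirichlet data, and apply the Lax--Milgram theorem to obtain existence, uniqueness, and the a-priori estimate for $\bar p$; the corresponding bound for $\bar u$ then follows from the first equation. Here the uniform positivity $a \ge a_0 > 0$ is what guarantees coercivity and hence a constant depending only on $a_0$ and $a_1$.

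There is essentially no obstacle in this statement, as it is a corollary of Lemma~\ref{lem:stat}. The only point requiring (minimal) care is the correct matching of the right-hand sides and the check that $u_0, p_0 \in L^2(0,1)$, so that the integrability hypotheses of that lemma hold. I would therefore keep the proof to a single sentence citing Lemma~\ref{lem:stat} with the stated identifications, exactly as the preceding text (\emph{``Using Lemma~\ref{lem:stat}, we readily obtain''}) already anticipates.
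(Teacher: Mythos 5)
Your proposal is correct and matches the paper's own argument exactly: the paper introduces this lemma with ``Using Lemma~\ref{lem:stat}, we readily obtain,'' i.e., it too treats the system as the special case $\bar f = u_0$, $\bar g = p_0$, $\bar h = 0$ of the stationary problem \eqref{eq:stat1}--\eqref{eq:stat3}. Your verification of the hypotheses and the optional self-contained Lax--Milgram sketch are consistent with (indeed slightly more explicit than) what the paper does.
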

Let us now define 
\begin{align*}
U(t) =  \int_0^t u(s) ds - \bar u 
\qquad \text{and} \qquad 
P(t) = \int_0^t p(s) ds - \bar p.
\end{align*}
Then $(U,P)$ is the classical solution of the damped wave system 
\eqref{eq:instat1}--\eqref{eq:instat3} with initial values $U(0)=-\bar u$ and $P(0)=-\bar p$. 
Applying Theorem~\ref{thm:ee2} for $k=1$ to $(U,P)$, we obtain
\begin{align*}
&\|u(t)\|^2 + \|p(t)\|^2 
=  \|\dt U(t)\|^2 + \|\dt P(t)\|^2 \\
& \qquad \le C e^{-\alpha (t-s) } \big(\|\dt U(s)\|^2 + \|\dt P(s)\|^2 \big)
= C e^{-\alpha (t-s)} \big( \|u(s)\|^2 + \|p(s)\|^2 \big),
\end{align*}
which yields the assertion of Theorem~\ref{thm:ee2} for $k=0$.
The result for $k \ge 2$ then follows by applying the estimate for 
$k=0$ to $(\dtk u, \dtk p)$. 

\subsection{Proof of the Theorem~\ref{thm:ee2hn} for $k=1$} 
We now turn to the fully discrete schemes. 
To establish the decay estimate for the energy $E_h^{1,n}=\frac{1}{2}(\|\bar\partial_\tau u_h^n\|^2+\|\bar\partial_\tau p_h^n\|^2)$, 
let us define the modified energy
\begin{align*}
 E_{h,\eps}^{1,n}= E_h^{1,n}+\eps(\bar\partial_\tau u_h^n,u_h^{n,\theta}).
\end{align*}
As before, the two energies $ E_h^{1,n}$ and $ E_{h,\eps}^{1,n}$ are equivalent for approriate choice of $\eps$. 
\begin{lemma}
\label{lem:a6}
Let $|\eps|<\frac{a_0}{4+2a_1}$. Then
\begin{align*}
\frac{1}{2} E_h^{1,n}\le  E_{h,\eps}^{1,n}\le \frac{3}{2} E_h^{1,n}.
\end{align*}
\end{lemma}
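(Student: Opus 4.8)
The plan is to mirror the continuous argument of Lemma~\ref{lem:a3} almost verbatim, since the only quantity that needs to be controlled is the cross term $\eps(\bar\partial_\tau u_h^n, u_h^{n,\theta})$ appearing in the definition of $E_{h,\eps}^{1,n}$. The single genuinely new ingredient is a \emph{fully discrete counterpart} of the pointwise bound from Lemma~\ref{lem:a2}, namely the estimate $\|u_h^{n,\theta}\| \le \frac{a_1}{a_0}\|\bar\partial_\tau p_h^n\| + \frac{1}{a_0}\|\bar\partial_\tau u_h^n\|$. Once this is in hand, the remainder of the proof is pure bookkeeping reproducing the continuous calculation.

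To obtain the discrete bound I would first apply the generalized Poincar\'e inequality of Lemma~\ref{lem:a1} directly to $u_h^{n,\theta} \in V_h \subset H^1(0,1)$, which gives $\|u_h^{n,\theta}\| \le \frac{a_1}{a_0}\|\dx u_h^{n,\theta}\| + \frac{1}{a_0}\,|(a u_h^{n,\theta},1)|$ after inserting the bounds $a_0 \le a \le a_1$. The two remaining terms are precisely where the compatibility conditions enter. Because $1 \in V_h$, I may test \eqref{prob:theta1} with $\bar v_h = 1$; using $\dx 1 = 0$ this yields $(a u_h^{n,\theta},1) = -(\bar\partial_\tau u_h^n, 1)$ and hence $|(a u_h^{n,\theta},1)| \le \|\bar\partial_\tau u_h^n\|$. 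Because $Q_h = \dx V_h$ and $u_h^{n,\theta} \in V_h$, the function $\dx u_h^{n,\theta}$ is an admissible test function in \eqref{prob:theta2}; choosing $\bar q_h = \dx u_h^{n,\theta}$ gives $\|\dx u_h^{n,\theta}\|^2 = -(\bar\partial_\tau p_h^n, \dx u_h^{n,\theta})$, so that $\|\dx u_h^{n,\theta}\| \le \|\bar\partial_\tau p_h^n\|$ by Cauchy--Schwarz. Combining the two bounds produces the desired discrete analogue of Lemma~\ref{lem:a2}.

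With this estimate available, the conclusion follows exactly as in Lemma~\ref{lem:a3}: Cauchy--Schwarz together with the discrete bound gives $(\bar\partial_\tau u_h^n, u_h^{n,\theta}) \le \frac{1}{a_0}\|\bar\partial_\tau u_h^n\|^2 + \frac{a_1}{a_0}\|\bar\partial_\tau u_h^n\|\,\|\bar\partial_\tau p_h^n\|$, and Young's inequality on the last term yields $|(\bar\partial_\tau u_h^n, u_h^{n,\theta})| \le \frac{2+a_1}{2a_0}\big(\|\bar\partial_\tau u_h^n\|^2 + \|\bar\partial_\tau p_h^n\|^2\big) = \frac{2+a_1}{a_0} E_h^{1,n}$. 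The hypothesis $|\eps| \le \frac{a_0}{4+2a_1}$ then forces $|\eps(\bar\partial_\tau u_h^n, u_h^{n,\theta})| \le \frac{1}{2} E_h^{1,n}$, and the two-sided estimate follows from the triangle inequality applied to $E_{h,\eps}^{1,n} = E_h^{1,n} + \eps(\bar\partial_\tau u_h^n, u_h^{n,\theta})$. The main (and essentially only) obstacle is the discrete Lemma~\ref{lem:a2}, and the point worth emphasizing is that the two test-function choices $\bar v_h = 1$ and $\bar q_h = \dx u_h^{n,\theta}$ are admissible \emph{precisely} because of the compatibility conditions $1 \in V_h$ and $Q_h = \dx V_h$; the $\theta$-averaging causes no difficulty, since $u_h^{n,\theta} \in V_h$ and the discrete energy is built from $\bar\partial_\tau u_h^n$, so the algebra lines up identically with the time-continuous case.
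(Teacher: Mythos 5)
Your proposal is correct and follows exactly the route the paper intends: the paper states that Lemma~\ref{lem:a6} ``follows almost verbatim as that of Lemma~\ref{lem:a3}'', and your argument is precisely that transcription, including the discrete analogue of Lemma~\ref{lem:a2} (obtained by testing \eqref{prob:theta1} with $\bar v_h = 1$ and \eqref{prob:theta2} with $\bar q_h = \dx u_h^{n,\theta}$), which is the same discrete estimate the paper invokes in the proof of Lemma~\ref{lem:a7}. Your identification of where the compatibility conditions $1 \in V_h$ and $Q_h = \dx V_h$ enter matches the paper's remark in the proof of Theorem~\ref{thm:ee2h} as well.
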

\noindent 
The proof of this assertion follows almost verbatim as that of Lemma \ref{lem:a3}.
With similar arguments as on the continuous level, we can then also establish the 
exponential decay estimate for the modified energy $ E_{h,\eps}^{1,n}$. 
\begin{lemma} \label{lem:a7}
Let $1/2<\theta\le 1$, $0<\eps\le \eps_0=\frac{2a_0^3}{8a_0^2+4a_0^2a_1+3a_0a_1+4a_1^4}$, 
and $0 < \tau \le \tau_0$ with 
\begin{align*} 
\tau_0=\frac{\theta-\frac{1}{2}}{\eps_0(\frac{5}{4}\theta^2+\frac{a_1}{2a_0}\theta^2+\frac{(1-\theta)^2}{4}+\frac{\theta(1-\theta)}{2})}.
\end{align*}
Then there holds
\begin{align*}
E_{h,\eps}^{1,n} 
\le e^{-\eps (n-m)\tau/3} E_{h,\eps}^{1,m} \qquad \text{for all} \qquad m \le n.
\end{align*}
\end{lemma}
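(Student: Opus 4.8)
The plan is to mirror the continuous proof of Lemma~\ref{lem:a4} as closely as possible, using the discrete energy identity of Lemma~\ref{lem:enbal} in place of the time-continuous energy dissipation estimate. First I would compute the backward difference quotient $\bar\partial_\tau E_{h,\eps}^{1,n}$ by combining the difference of the physical energy $E_h^{1,n}$, which Lemma~\ref{lem:enbal} controls, with the difference of the correction term $\eps(\bar\partial_\tau u_h^n, u_h^{n,\theta})$. The former contributes the dissipation $-(\theta-\tfrac12)\tau(\|\bar\partial_{\tau\tau} u_h^n\|^2 + \|\bar\partial_{\tau\tau} p_h^n\|^2) - (a\,\bar\partial_\tau u_h^{n,\theta}, \bar\partial_\tau u_h^{n,\theta})$, where the first summand is nonpositive precisely because $\theta>\tfrac12$, and the second is bounded above by $-a_0\|\bar\partial_\tau u_h^{n,\theta}\|^2$.

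The heart of the argument is to expand $\bar\partial_\tau(\bar\partial_\tau u_h^n, u_h^{n,\theta})$ by the discrete product rule. This produces a term of the form $(\bar\partial_{\tau\tau} u_h^n, u_h^{n,\theta})$ together with a term involving $(\bar\partial_\tau u_h^n, \bar\partial_\tau u_h^{n,\theta})$ and leftover $O(\tau)$ pieces coming from the staggered $\theta$-averaging. To treat $(\bar\partial_{\tau\tau} u_h^n, u_h^{n,\theta})$, I would exploit that the second differences $(\bar\partial_\tau u_h^n, \bar\partial_\tau p_h^n)$ again solve the discrete scheme \eqref{prob:theta1}--\eqref{prob:theta2}, so testing the first discrete equation (written for the differenced solution) with $\bar v_h = u_h^{n,\theta}$ lets me replace $(\bar\partial_{\tau\tau} u_h^n, u_h^{n,\theta})$ by $(\bar\partial_\tau p_h^{n,\theta}, \dx u_h^{n,\theta}) - (a\,\bar\partial_\tau u_h^{n,\theta}, u_h^{n,\theta})$. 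Using the second discrete equation, $(\dx u_h^{n,\theta}, \bar\partial_\tau p_h^{n,\theta}) = -(\bar\partial_{\tau\tau} p_h^n, \bar\partial_\tau p_h^{n,\theta})\cdot(\text{stuff})$, I would convert the pressure coupling into a negative definite $-\|\bar\partial_\tau p_h^{n,\theta}\|^2$ contribution, exactly as the term $-\|\dt p(t)\|^2$ arises on the continuous level. The remaining cross term $a_1\|\bar\partial_\tau u_h^{n,\theta}\|\,\|u_h^{n,\theta}\|$ is handled by the discrete analogue of Lemma~\ref{lem:a2}, whose validity is guaranteed by the hypotheses $Q_h=\dx V_h$ and $1\in V_h$, followed by Young's inequality to split off a controlled multiple of $\|\bar\partial_\tau u_h^{n,\theta}\|^2$.

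Collecting these estimates I expect to reach an inequality of the shape
\begin{align*}
\bar\partial_\tau E_{h,\eps}^{1,n}
\le -\Big(a_0 - \eps\big(1+\tfrac{a_1}{a_0}+\tfrac{a_1^4}{2a_0^2}\big)\Big)\|\bar\partial_\tau u_h^{n,\theta}\|^2
- \tfrac{\eps}{2}\|\bar\partial_\tau p_h^{n,\theta}\|^2 + \tau\,(\text{positive remainder}),
\end{align*}
where the remainder gathers the $O(\tau)$ averaging errors and must be absorbed by the nonpositive dissipation $-(\theta-\tfrac12)\tau(\cdots)$ coming from Lemma~\ref{lem:enbal}. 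This absorption is where the stepsize bound $\tau\le\tau_0$ enters: the explicit $\tau_0$ in the statement is precisely the largest $\tau$ for which the positive remainder is dominated by the $(\theta-\tfrac12)$-weighted negative term, which is why $\tau_0$ vanishes as $\theta\to\tfrac12$. With the balancing choice of $\eps$ (identical in spirit to Lemma~\ref{lem:a4}, the denominator being enlarged to respect the equivalence bound of Lemma~\ref{lem:a6}) I obtain $\bar\partial_\tau E_{h,\eps}^{1,n}\le -\eps E_h^{1,n}\le -\tfrac{2}{3}\eps\,E_{h,\eps}^{1,n}$, i.e. $E_{h,\eps}^{1,n}\le (1+\tfrac{2}{3}\eps\tau)^{-1}E_{h,\eps}^{1,n-1}$. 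Iterating and using $(1+x)^{-1}\le e^{-x/(1+x_0)}$ for $0\le x\le x_0$ yields the geometric decay $E_{h,\eps}^{1,n}\le e^{-\eps(n-m)\tau/3}E_{h,\eps}^{1,m}$.

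The main obstacle I anticipate is bookkeeping the staggered $\theta$-averages cleanly: on the continuous level the product rule is exact, but here $\bar\partial_\tau(\bar\partial_\tau u_h^n, u_h^{n,\theta})$ and the testing with $u_h^{n,\theta}$ generate genuine $O(\tau)$ discrepancies between quantities evaluated at level $n$ versus the $\theta$-weighted level, and these must be tracked carefully enough to show they are all controlled by the single dissipative reservoir $-(\theta-\tfrac12)\tau(\|\bar\partial_{\tau\tau} u_h^n\|^2+\|\bar\partial_{\tau\tau} p_h^n\|^2)$. Getting the constants in $\tau_0$ to come out exactly as stated requires matching each remainder term against this reservoir with the right weights, which is the genuinely delicate part of the computation.
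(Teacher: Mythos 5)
Your overall strategy coincides with the paper's: difference the modified energy, use Lemma~\ref{lem:enbal} for the physical part, expand the correction term by a discrete product rule, exploit that $(\bar\partial_\tau u_h^n,\bar\partial_\tau p_h^n)$ again solves \eqref{prob:theta1}--\eqref{prob:theta2}, invoke the discrete analogue of Lemma~\ref{lem:a2} plus Young's inequality, and control the $O(\tau)$ terms by the dissipation reservoir $-(\theta-\tfrac12)\tau\big(\|\bar\partial_{\tau\tau}u_h^n\|^2+\|\bar\partial_{\tau\tau}p_h^n\|^2\big)$ under a step-size restriction. Up to the choice of test function ($u_h^{n,\theta}$ instead of the paper's $u_h^{n-1,\theta}$, which is immaterial), this mirrors the paper step by step.

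However, your concluding chain $\bar\partial_\tau E_{h,\eps}^{1,n}\le-\eps E_h^{1,n}\le-\tfrac23\eps E_{h,\eps}^{1,n}$ has a genuine gap for $\tfrac12<\theta<1$. After balancing $\eps$, the negative-definite terms you have produced are $-\tfrac\eps2\|\bar\partial_\tau u_h^{n,\theta}\|^2-\tfrac\eps2\|\bar\partial_\tau p_h^{n,\theta}\|^2$, i.e.\ norms of the \emph{$\theta$-averaged} difference quotients, whereas $E_h^{1,n}$ is built from the \emph{nodal} quantities $\bar\partial_\tau u_h^n,\bar\partial_\tau p_h^n$. There is no inequality of the form $\|\bar\partial_\tau u_h^{n,\theta}\|^2\ge c\,\|\bar\partial_\tau u_h^n\|^2$: taking $\bar\partial_\tau u_h^{n-1}=-\tfrac{\theta}{1-\theta}\bar\partial_\tau u_h^n$ makes the averaged quantity vanish while the nodal one does not (only for $\theta=1$ does your step go through verbatim). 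The paper resolves this by \emph{not} spending the whole reservoir on the remainders: it retains the portion $\tfrac{\eps}{2}\theta(1-\theta)\tau^2\big(\|\bar\partial_{\tau\tau}u_h^n\|^2+\|\bar\partial_{\tau\tau}p_h^n\|^2\big)$ --- this is where the summand $\tfrac{\theta(1-\theta)}{2}$ in the denominator of $\tau_0$ comes from, and it is not a remainder term --- and then uses the identity
\begin{align*}
\|\theta a+(1-\theta)b\|^2+\theta(1-\theta)\|a-b\|^2=\theta\|a\|^2+(1-\theta)\|b\|^2
\end{align*}
with $a=\bar\partial_\tau u_h^n$, $b=\bar\partial_\tau u_h^{n-1}$ (and likewise for $p$) to reconstruct exactly $\theta E_h^{1,n}+(1-\theta)E_h^{1,n-1}$ on the right-hand side. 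This yields the two-level recursion $(1+\tfrac23\eps\theta\tau)E_{h,\eps}^{1,n}\le(1-\tfrac23\eps(1-\theta)\tau)E_{h,\eps}^{1,n-1}$, from which $E_{h,\eps}^{1,n}\le(1-\tfrac{\eps\tau}{3})E_{h,\eps}^{1,n-1}\le e^{-\eps\tau/3}E_{h,\eps}^{1,n-1}$ follows using $\tfrac23\eps\theta\tau\le1$. Your sketch could be repaired without this identity, e.g.\ via $\|\bar\partial_\tau u_h^{n,\theta}\|^2\ge\tfrac12\|\bar\partial_\tau u_h^n\|^2-(1-\theta)^2\tau^2\|\bar\partial_{\tau\tau}u_h^n\|^2$ and absorbing the extra positive term into the reservoir as well, but that loses a constant factor in the decay rate and would not reproduce $\tau_0$ and the exponent $e^{-\eps\tau/3}$ exactly as stated in the lemma.
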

\noindent
Note that the maximal step size  $\tau_0$ only depends on $a_0$, $a_1$, and the choice of $\theta$. 
The condition $\theta > 1/2$ is required here to make $\tau_0$ positive.  

\begin{proof}
 Following the arguments of the proof of Lemma~\ref{lem:a4}, we start with
 \begin{align*} 
\bar\partial_\tau E_{h,\eps}^{1,n} 
&= \bar\partial_\tau E_{h}^{1,n} + \eps \bar\partial_\tau (\bar\partial_\tau u_h^n, u_h^{n,\theta}) \\
&\le - a_0 \|\bar\partial_\tau u_h^{n,\theta}\|^2 - (\theta-\tfrac{1}{2}) \tau \big(\|\bar\partial_{\tau\tau} u_h^{n}\|^2 + \|\bar\partial_{\tau\tau} p_h^{n}\|^2 \big) + \eps \bar\partial_\tau (\bar\partial_\tau u_h^n, u_h^{n,\theta}). \notag
\end{align*}
The last term can be expanded as
\begin{align*}
 \bar\partial_\tau (\bar\partial_\tau u_h^n, u_h^{n,\theta}) = (\bar\partial_\tau u_h^n, \bar\partial_\tau u_h^{n,\theta}) + (\bar\partial_{\tau\tau} u_h^n, u_h^{n-1,\theta}).
\end{align*}
By using the identity $\bar\partial_\tau u_h^n=\bar\partial_\tau u_h^{n,\theta}+(1-\theta)\tau\bar\partial_{\tau\tau}u_h^n$, the first term of this expression yields
\begin{align*}
(\bar\partial_\tau u_h^n, \bar\partial_\tau u_h^{n,\theta})
\le 2\|\bar\partial_\tau u_h^{n,\theta}\|^2 
  + \frac{(1-\theta)^2\tau^2}{4}\|\bar\partial_{\tau\tau}u_h^n\|^2.
\end{align*}
To estimate the second term, 
we use the fact that besides  $(u_h^n,p_h^n)$ also $(\bar\partial_\tau u_h^n,\bar\partial_\tau p_h^n)$ satisfies equation \eqref{prob:theta1} and \eqref{prob:theta2}. This implies
\begin{align*}
 (\bar\partial_{\tau\tau} u_h^n, u_h^{n-1,\theta}) &=(\bar\partial_\tau p_h^{n,\theta},\partial_x u_h^{n-1,\theta}) -(a\bar\partial_\tau u_h^{n,\theta},u_h^{n-1,\theta})\\
 &=-(\bar\partial_\tau p_h^{n,\theta},\bar\partial_\tau p_h^{n-1})-(a\bar\partial_\tau u_h^{n,\theta},u_h^{n-1,\theta}).
\end{align*}
Using that $\bar\partial_\tau p_h^{n-1} = \bar\partial_\tau p_h^{n,\theta} - \theta \tau \bar\partial
_{\tau\tau} p_h^n$, we see that
\begin{align*}
-(\bar\partial_\tau p_h^{n,\theta},\bar\partial_\tau p_h^{n-1}) 
= -\|\bar\partial_\tau p_h^{n,\theta}\|^2 + \theta\tau (\bar\partial_\tau p_h^{n,\theta}, \bar\partial_{\tau\tau} p_h^n)
\le -\frac{3}{4} \|\bar\partial_\tau p_h^{n,\theta}\|^2 + \theta^2\tau^2 \|\bar\partial_{\tau\tau} p_h^n\|^2.
\end{align*}
A discrete version of Lemma~\ref{lem:a2} allows us to bound 
\begin{align*}
\|u_h^{n-1,\theta}\| \le \frac{1}{a_0} \|\bar\partial_\tau u_h^{n-1}\|  + \frac{a_1}{a_0} \|\bar\partial_\tau p_h^{n-1}\|.
\end{align*}
The remaining term in the above estimate can then be treated by
\begin{align*}
-(a\bar\partial_\tau u_h^{n,\theta},u_h^{n-1,\theta})
&\le \|\bar\partial_\tau u_h^{n,\theta}\| \big( \frac{a_1}{a_0}\|\bar\partial_\tau u_h^{n-1}\| 
 +\frac{a_1^2}{a_0}\|\bar\partial_\tau p_h^{n-1}\|\big)\\
&\le \|\bar\partial_\tau u_h^{n,\theta} \| \big( \frac{a_1}{a_0} \|\bar \partial_\tau u_h^{n,\theta}\| + \theta \tau \frac{a_1}{a_0} \|\bar\partial_{\tau\tau} u_h^n\| + \frac{a_1^2}{a_0} \|\bar\partial_\tau p_h^{n,\theta}\| + \theta \tau \frac{a_1^2}{a_0}\|\bar\partial_{\tau\tau} p_h^n\|\big), 
\end{align*}
where for the last step, we used the same expansion of $p_h^{n-1}$ as above and a similar formula for $u_h^{n-1}$. 
Via Youngs inequalities and basic manipulations, we then arrive at
\begin{align*}
-(a\bar\partial_\tau u_h^{n,\theta},u_h^{n-1,\theta})
&\le 
(\frac{3a_0a_1+4a_1^4}{2a_0^2})\|\bar\partial_\tau u_h^{n,\theta}\|^2 
+\frac{1}{4}\|\bar\partial_\tau p_h^{n,\theta}\|^2 
\\  &\qquad \qquad 
+\frac{1}{4}\theta^2\tau^2\|\bar\partial_{\tau\tau}p_h^n\|^2
+\frac{a_1}{2a_0}\theta^2\tau^2\|\bar\partial_{\tau\tau}u_h^n\|^2.
\end{align*}
In summary, we thus arrive at 
\begin{align*}
(\bar\partial_{\tau\tau} u_h^n, u_h^{n-1,\theta}) 
 &\le (\frac{3a_0a_1+4a_1^4}{2a_0^2})\|\bar\partial_\tau u_h^{n,\theta}\|^2 -\frac{1}{2}\|\bar\partial_\tau p_h^{n,\theta}\|^2 
\\ &\qquad \qquad 
+\frac{5}{4}\theta^2\tau^2\|\bar\partial_{\tau\tau}p_h^n\|^2+\frac{a_1}{2a_0}\theta^2\tau^2\|\bar\partial_{\tau\tau}u_h^n\|^2.
\end{align*}
Putting all estimates together, we finally obtain
\begin{align*}
 \bar\partial_\tau E_{h,\eps}^{1,n}&
\le
-\big(a_0-\eps \frac{4a_0^2 + 3a_0a_1+4a_1^4}{2a_0^2}\big)\|\bar\partial_\tau u_h^{n,\theta}\|^2  
-\frac{\eps}{2}\|\bar\partial_\tau p_h^{n,\theta}\|^2\\
 &\,\quad
-\big(\theta-\frac{1}{2}-\eps\tau \frac{2 a_1 \theta^2 + a_0 (1-\theta)^2}{4a_0}\big)\tau\|\bar\partial_{\tau\tau}u_h^n\|^2
-\big(\theta-\frac{1}{2}-\eps\tau\frac{5\theta^2}{4}\big)\tau\|\bar\partial_{\tau\tau}p_h^n\|^2.
\end{align*}
By the particular choice of $\tau_0$, we may estimate the terms in the second line 
from above by $-\frac{\eps}{2} \theta (1-\theta) \tau^2 \big( \|\bar\partial_{\tau\tau}u_h^n\|^2 + \|\bar\partial_{\tau\tau}p_h^n\|^2\big)$.
The two factors in the first line are balanced by the choice $\eps=\frac{2a_0^3}{5a_0^2+3a_0a_1+4a_1^4}$. In order to satisfy also the condition of Lemma \ref{lem:a6}, we enlarge the 
denominator by $3a_0^2+4a_0^2a_1$, and obtain
\begin{align*}
\bar\partial_\tau E_{h,\eps}^{1,n}
&\le 
-\eps_0 \big\{ \frac{1}{2} \big( \|\bar\partial_\tau u_h^{n,\theta}\|^2  + \|\bar\partial_\tau p_h^{n,\theta}\|^2\big) 
+ \theta (1-\theta) \frac{\tau^2}{2} \big( \|\bar\partial_{\tau\tau}u_h^n\|^2+\|\bar\partial_{\tau\tau}p_h^n\|^2\big) \big\} \\
&= -\eps_0 \big( \theta E_h^{1,n} + (1-\theta) E_h^{1,n-1} \big)
 \le  -\eps \big( \theta E_h^{1,n} + (1-\theta) E_h^{1,n-1} \big)
\end{align*}
for $\eps \le \eps_0$.
Because of the equivalence of the energies stated in Lemma~\ref{lem:a6}, this leads to
\begin{align*}
E_{h,\eps}^{1,n} \le \frac{1-\frac{2}{3} \eps (1-\theta) \tau}{1+\frac{2}{3}\eps \theta \tau} E_{h,\eps}^{1,n-1} \le(1-\frac{\eps\tau}{3})E_{h,\eps}^{1,n-1}
\le e^{-\eps \tau/3} E_{h,\eps}^{1,n-1},
\end{align*}
where we used that $ \tfrac{2}{3}\eps \theta \tau \le \tfrac{2}{3}\eps_0 \theta \tau_0 \le 1$ in the second step, which follows from the definition of $\tau_0$. 
The assertion of the Lemma now follows by induction.
\end{proof}

\begin{remark}
Let us emphasize that the assertion of  Lemma~\ref{lem:a7} holds true also for the choice 
$\theta=\frac{1}{2} + \lambda \tau$ with $\lambda$ sufficiently large, but independent of $\tau$.
\end{remark}

Using the equivalence of the discrete energies stated in Lemma~\ref{lem:a6}, we now readily obtain 
the proof of Theorem~\ref{thm:ee2hn} for the case $k=1$.

\subsection{Proof of Theorem~\ref{thm:ee2hn} for $k=0$ and $k \ge 2$}.

The assertions for $k = 0$ and $k \ge 2$ follow from the one for $k=1$ with the same arguments as on the continuous level.



\end{document}